\newtheorem{theorem}{Theorem}[section]
\newtheorem{proposition}[theorem]{Proposition}%[section]
\newtheorem{definition}[theorem]{Definition}%[section]
\newtheorem{lemma}[theorem]{Lemma}%[section]
\newtheorem{remark}[theorem]{Remark}%[section]
\begin{document}
\title{\bf{Indefinite Linear-Quadratic Partially Observed Mean-Field Game\footnote{This research was supported by National Key R\&D Program of China [Grant 2023YFA1009200], the National Natural Science Foundation of China [Grants 12401583, 12022108 and 11971267], the Shandong Provincial Natural Science Foundation [Grants ZR2022JQ01 and ZR2024MA095], the Basic Research Program of Jiangsu [Grant BK20240416], the Taishan Scholars Climbing Program of Shandong [Grant TSPD20210302], and the Distinguished Young Scholars Program of Shandong University.}}}
\author{Tian Chen\footnote{School of Mathematics, Shandong University, Jinan, China,  chentian43@sdu.edu.cn, nietianyang@sdu.edu.cn, wuzhen@sdu.edu.cn} \quad Tianyang Nie\footnotemark[2] \quad Zhen Wu\footnotemark[2]}

\maketitle

\vspace{2mm}\noindent\textbf{Abstract.}\quad This paper investigates an indefinite linear-quadratic partially observed mean-field game with common noise, incorporating both state-average and control-average effects. In our model, each agent's state is observed through both individual and public observations, which are modeled as general stochastic processes rather than Brownian motions. {It is noteworthy that} the weighting matrices in the cost functional are allowed to be indefinite. We derive the optimal decentralized strategies using the Hamiltonian approach and establish the well-posedness of the resulting Hamiltonian system by employing a relaxed compensator. The associated consistency condition and the feedback representation of decentralized strategies are also established. Furthermore, we demonstrate that the set of decentralized strategies form an $\varepsilon$-Nash equilibrium. As an application, we solve a mean-variance portfolio selection problem.

\vspace{2mm} \noindent\textbf{Keywords.}\quad Mean-field game, Partially observed, Indefinite linear-quadratic control, Common noise, Riccati equation.

\vspace{2mm}\noindent\textbf{AMS subject classification.} 91A16, 49N10, 91A10, 93C41

%60H10

\section{Introduction}

\section{Introduction}
Given $T\geq 0$. Let $(\Omega,\mathscr{F},\{\mathscr{F}_t\}_{0\leq t\leq T},\mathbb{P})$ be a complete filtered probability space. There are $2N$ independent $d$-dimensional $\mathscr{F}_t$-adapted standard Brownian motions $\{W_t^i,\overline{W}_t^i; 1\leq i\leq N\}$ and an $1$-dimensional standard Brownian motion $W_t^0$. Let $\mathscr{F}_t:=\sigma\{W^i_{s}, \overline W^i_{s}, W^0_{s}, 1\leq i\leq N\}_{0\leq s\leq t}$ augmented by all $\mathbb P$-null set $\mathscr{N}$, which is the full information of this large population (LP) system. Define $\mathscr{F}_t^{i}:= \sigma\{W_s^i,\overline W_s^i, W_s^0\}_{0\leq s\leq t}\vee \mathscr{N}$ and $\mathscr F^i:= \{\mathscr{F}_t^i\}_{0\leq t\leq T}$, which denotes all information of $i$-th agent $\mathcal{A}_i$.

\subsection{Motivation}\label{motivation}

Mean-field games (MFGs) have attracted extensive attention over the past few decades, owing to their profound theoretical implications and broad practical applications (see \cite{BFY2013,ET2015,Carmona2018}). Here, we present an asset-liability management problem with mean-variance performance, which motivated us to study the indefinite MFG problem with partial observation. The market consists of two investable assets. One is a risk-free bond whose price process, denoted by $S^0$, is governed by the following ordinary differential equation (ODE):
\begin{equation*}
    \mathrm{d}S^0_t=r_tS^0_t\mathrm{d}t, \quad S_0^0=s_0.
\end{equation*}
Here $s_0>0$ denotes the initial price and $r_t>0$ represents the risk-free interest rate of this bond. The other is a stock, whose price process $S$ satisfies the following stochastic differential equation (SDE):
\begin{equation*}
    \mathrm{d}S_t=\mu_tS_t\mathrm{d}t+\sigma_tS_t\mathrm{d}W_t^0,\quad S_0=s,
\end{equation*}
where $s>0$ denotes the initial price, $\mu_t>0$ is the appreciation, and $\sigma_t>0$ is the volatility. In the investment market, there are $N$ homogeneous individual investors, each of whom invests through a distinct agency company and receives returns. It is also assumed that there are $N$ agency companies, each aligned with the corresponding investor's objective. Let $\Lambda^i$ denote the total wealth allocated by investor $\mathcal A_i$ through the agency, and let $u^i$ be the amount invested in the stock. Then the remaining amount, $\Lambda^i - u^i$, is invested in the bond. Thus the wealth process $\Lambda^i$ satisfies the following SDE:
\begin{equation*}
    \mathrm{d}\Lambda^i_t=[r_t\Lambda^i_t+(\mu_t-r_t)u^i_t]\mathrm{d}t+\sigma_tu^i_t\mathrm{d}W_t^0,\quad \Lambda_0^i=w,
\end{equation*}
where $w>0$ denotes the initial endowment of the $i$-th investor. In additional, to ensure normal operation fulfill other obligations, each company carries a liability, denoted by $l^i$, which evolves according to the following SDE, \cite{Wang2015}:
\begin{equation*}
    -\mathrm{d}l^i_t=-(r_tl_t^i+b_t)\mathrm{d}t+c_t\mathrm{d}W_t^i+\bar c_t\mathrm{d}\overline W_t^i, \quad l_0^i=l.
\end{equation*}
Here $b_t>0$ denotes the {expected} liability rate, $c$ and $\bar c$ are the volatility of liability, and $l>0$ is the initial liability. For simplicity, we assume that the appreciation rate of the liability equals the risk-free interest rate. Thus, the $i$-th company's cash balance, defined by $x^i=\Lambda^i-l^i$, satisfies
\begin{equation*}
    \mathrm{d}x^i_t=(r_tx_t^i+B_tu_t^i-b_t)\mathrm{d}t+\sigma_tu_t\mathrm{d}W_t^0+c_t\mathrm{d}W_t^i+\bar c_t\mathrm{d}\overline W_t^i,\quad x_0^i=x_0,
\end{equation*}
where $B=\mu-r$ and $x_0=w-l$. Each investor independently determines the amount of wealth invested in the associated company, but cannot access full information about the company's liability. Based on the company's public information, each investor observes a related process $y$, which evolves according to the following SDE:
\begin{equation*}
    \mathrm{d}y_t^i=(G_tx_t^i+\tilde b_t)\mathrm{d}t+\tilde\sigma_t\mathrm{d}\overline W_t^i,\quad y_0^i=0.
\end{equation*}
Additionally, based on the stock price information, each agent also observes a public process $\theta$ {related to the common noise} $W^0$, which satisfies the following dynamics:
\begin{equation*}
    \mathrm{d}\theta=(I_t\theta_t+\check b_t)\mathrm{d}t+\check \sigma_t\mathrm{d}W_t^0,\quad \theta_0=0.
\end{equation*}
It can be seen that when $I=\check b=0$ and $\check\sigma=1$, the common noise becomes directly observable. Let $\mathscr{F}_t^{y^i}:= \sigma\{y_s^i,\theta_s\}_{0\leq s\leq t}$ be the information available to agent $\mathcal A_i$. We then consider a portfolio selection problem under a mean-variance performance criterion, originally introduced by Markowitz \cite{M1952, M1960}. Since Markowitz's seminal work, the mean-variance framework has been extensively studied in the context of continuous-time, multi-period portfolio selection problems, see \cite{LZ2002,ZY2003,XXZ2021,GXZ2023,HSX2023}. This problem is characterized by two conflicting objectives: maximizing the expected terminal wealth and minimizing the associated risk, which is measured by the variance of terminal wealth. In our setting, each agent faces two conflicting objectives. The first is to maximize expected terminal wealth, given by $\mathbb E[x_T^i]$. The second is to minimize the associated risk, measured by $\mathbb E[|x_T^i-x_T^{(N)}|^2]$. Here {$x^{(N)}=\frac{1}{N}\sum_{i=1}^Nx^i$} denotes the average terminal cash balance across all agents. This problem constitutes a multi-objective stochastic optimal control problem, which can be addressed via the following auxiliary control problem (see \cite{Y1974}):

 \textbf{Problem (EX).} Find a strategy $\hat u=(\hat u^1,\hat u^2,\cdots,\hat u^N)$ such that
\begin{equation}\label{cost1}
  \mathcal J_i(\hat u_{\cdot}^i,\hat u_{\cdot}^{-i})=\inf_{u^i}\mathcal J_i(u^i_{\cdot},\hat u_{\cdot}^{-i})=\inf_{u^i}\frac{1}{2}\Big(\gamma \mathbb E\big[ |x_T^i-x_T^{(N)}|^2\big]-\mathbb E[x_T^i]\Big),
\end{equation}
where $\hat u^{-i}=(\hat u^1,\cdots,\hat u^{i-1},\hat u^{i+1},\cdots,\hat u^N)$ and $\gamma>0$ is a constant representing the weight.

Noting the cost functional \eqref{cost1}, Problem (EX) is in fact an indefinite problem. Indeed, the mean-variance portfolio selection problem belongs to a class of indefinite stochastic linear-quadratic (SLQ) optimal control problem, see \cite{ZL2000} for the case without mean-field interaction. {More precisely, this example represents an indefinite problem that fails to meet Condition (PD) but satisfies Condition (RC), (for more details, see Section \ref{sec:application}).} Motivated by above example, we investigate an LP system consisting of $N$ individual agents $\{\mathcal A_i\}_{1\leq i\leq N}$. The dynamics of agent $\mathcal A_i$ are governed by the following SDE:
\begin{equation}\label{xx}
  \left\{
    \begin{aligned}
      &\mathrm{d}x_t^i=\big\{A_tx_t^i+B_tu_t^i+\bar A_tx^{(N)}_t+\bar B_tu^{(N)}_t +b_t\big\} \mathrm{d}t+ \sigma_t \mathrm{d}W_t^i\\
      &\qquad\quad+\big\{D_tx_t^i+F_tu_t^i+\bar D_tx^{(N)}_t+\bar F_tu^{(N)}_t+\bar b_t\big\}\mathrm{d}W_t^0\!+\bar\sigma_t\mathrm{d}\overline W_t^i,\\
      &x_0^i=x,
    \end{aligned}
  \right.
\end{equation}
where $x^{(N)}:=\frac{1}{N}\sum_{i=1}^N x^i$ and $u^{(N)}:= \frac{1}{N}\sum_{i=1}^N u^i$ denote the state-average and control-average across all agents, respectively. We assume that agent $\mathcal A_i$ cannot directly access full information about the state. Instead, she observes a related process $y^i$ described by the following SDE:
\begin{equation}\label{YY}
    \begin{aligned}
      &\mathrm{d}y_t^i=\big\{G_tx_t^i+H_tu_t^i+\bar G_tx^{(N)}_t+\bar H_tu_t^{(N)}+\tilde b_t\big\}
      \mathrm{d}t+\tilde{\sigma}_t\mathrm{d}\overline W_t^i,\qquad
      y_0^i=0.
    \end{aligned}
\end{equation}
We assume that the common noise $W^0$ is observable by all agents through the following shared SDE:
\begin{equation}\label{theta}
    \begin{aligned}
      &\mathrm{d}\theta_t=\big(I_t\theta_t+\check b_t\big)\mathrm{d}t+\check{\sigma}_t \mathrm{d}W_t^0,\qquad
      \theta_0=0.
    \end{aligned}
\end{equation}
{It is worth noting that the introduction of common observation is crucial in partially observable systems where the diffusion term depends on the control, as it ensures the validity of Lemma \ref{lemma24} and Lemma \ref{lemma32} (see Remark \ref{rem+1}). Moreover, if $\check\sigma$ is non-degenerate (see (H1)), it follows from \cite[Theorem 7.16]{Liptser1977} that $\mathcal F^{\theta}=\mathcal F^{W^0}$.}  For convenience, let $u = (u^1, u^2, \cdots, u^N)$ be the set of strategies of all agents and $u^{-i} = (u^1,\cdots, u^{i-1}, u^{i+1},\cdots, u^N )$ be the set of strategies expect for $i$-th agent. Then, the cost functional of $i$-th agent takes the following form
\begin{equation}\label{cost}
  \begin{aligned}
    &\mathcal J_i(u^i_{\cdot},u^{-i}_{\cdot})=\frac{1}{2}\mathbb{E}\bigg[\int_0^T\langle Q_t\big(x_t^i -\alpha_1 x^{(N)}_t\big)+2q_t,x_t^i-\alpha_1 x^{(N)}_t\rangle +\langle R_t\big(u_t^i-\beta_1 u^{(N)}_t\big)+2r_t, u_t^i\\
    &\qquad-\beta_1 u^{(N)}_t\rangle
    +2\langle S_t\big(u_t^i-\beta_2 u^{(N)}_t\big), x_t^i-\alpha_2x^{(N)}_t\rangle\mathrm{d}t +\langle L_T\big(x_T^i-\alpha_3 x^{(N)}_T\big)+2l_T, x_T^i-\alpha_3 x^{(N)}_T\rangle\bigg],
  \end{aligned}
\end{equation}
{where $q$, $r$ and $l_T$ correspond to the coefficients of the first-order (linear) terms. They capture the linear dependence of the cost functional on the state and control variables.} We aim to find a Nash equilibrium $\hat u=(\hat u^{1},\hat u^{2},\cdots,\hat u^{N})$ such that $\mathcal J_i(\hat u^i_{\cdot},\hat u^{-i}_{\cdot})=\inf_{u^i}\mathcal J_i(u^i_{\cdot},\hat u^{-i}_{\cdot})$.

\subsection{Literature review and contributions}
Unlike traditional control and game systems, MFG models involve a large number of participants, whose individual actions are negligible, yet whose collective behavior can significantly influence the overall system or environment. The MFG framework was originally developed by Huang, Caines and Malham\'e \cite{HMC2007}, and independently by Lasry and Lions \cite{LL2007}. A widely used approach for solving MFG problems is to construct an approximate Nash equilibrium via an associated auxiliary control problem, derived by analyzing the limiting behavior of the system, see \cite{Hu20182,Xu2020,Wang2020}. Furthermore, there exists a substantial body of literature on MFG problems with common noise, see e.g. \cite{CDL2016,KT2019}.

The linear-quadratic (LQ) optimal control problem is a fundamental topic in control theory. It is well known that in the deterministic LQ setting, the control weighting matrix in the cost functional must be positive definite, see e.g. \cite{K1960,AM2007}. However, this assumption can be relaxed in the stochastic LQ setting, where the weighting matrix may be zero or even negative. The indefinite LQ control problem was first investigated through the solvability of the associated Riccati equations \cite{CLZ1998,CZ2000}, and was subsequently extended by other researchers \cite{LZ1999,RMZ2002}. Yu \cite{Y2013} proposed an equivalent cost functional method that transforms indefinite LQ problems into standard ones, this method was further developed by \cite{LLY2020}. Xu and Zhang \cite{Xu2020}, as well as Wang, Zhang and Zhang \cite{Wang20202} studied MFG with indefinite control weighting by imposing relatively strong technical conditions.

In the aforementioned literature, participants are assumed to have full knowledge of the system state. However, in practice, participants typically make decisions based on partial observations of the state. There is a substantial body of literature on stochastic control problems with partial observation, which can be broadly classified into two categories. The first class assumes that the observation process is an uncontrolled Brownian motion. In such cases, the control problem can often be handled via Girsanov's transformation (see \cite{LT1995,T1998}). The second class considers the observation process as a controlled stochastic process. In this paper, we focus on the second case. In such settings, the control is adapted to the observation filtration, resulting in a circular dependency between the control and the observation process. Wonham \cite{W1968} proposed the separation principle to address this issue. This principle allows to first compute the filtering of state, and then to solve fully observed optimal control problems driven by the filtering states. However, in many cases, the mean square error of the state estimate still depends on the control, rendering the Wonham separation principle inapplicable. Wang and Wu \cite{WW2008} introduced a backward separation approach for partially observed LQ control problem by first decomposing the state and observation, and then computing the filtering, see \cite{Wang2015} for more details. Recently, partially observed MFGs have also been studied, see \cite{BFH2021,CDW2024}. {Compared with \cite{WW2008,Wang2015,BFH2021,CDW2024}, this paper extends the backward separation approach to settings where the diffusion term depends on control variables, and further studies a class of MFGs with indefinite weighting matrix in the cost functional. For better illustration, we provide the following comparison table.\\

\begin{tabular}{|c|c|c|c|}
\hline
Literature & Condition (PD) & Control or game & Common noise \\
\hline
Wang \& Wu \cite{WW2008} & Satisfied & Control & No\\
\hline
Wang, Wu \& Xiong \cite{Wang2015} & Satisfied & Control & No\\
\hline
Bensoussan, Feng \& Huang \cite{BFH2021} & Satisfied & Mean-field game& Yes\\
\hline
Chen, Du \& Wu \cite{CDW2024} & Satisfied & Mean-field game& No\\
\hline
This paper & \makecell{Not satisfied\\ (Indefinite)} & Mean-field game & Yes\\
\hline
\end{tabular}}\\

{The study of the indefinite control} problems has primarily focused on stochastic LQ systems. To solve this problem, the control variable in the state's diffusion term plays a key role. However, the drift term of the observation process grows linearly rather than being uniformly bounded, which makes the Girsanov theorem difficult to verify (see \cite{LT1995,T1998}). The classical backward separation approach fails to handle cases where the control variable appears in the state's diffusion term (see \cite{Wang2015}). As a result, the indefinite partially observed LQ control problem has long remained open. Fortunately, by introducing a common noise $W^0$ and an associated observation process $\theta$, we extend the backward separation approach to the stochastic systems with control variables in the state's diffusion term related to common noise. Consequently, the indefinite partially observed LQ control problem is resolved within this framework.

The main contributions of this paper can be summarized as follows,

(1) A class of indefinite partially observed MFG problem with common noise is studied. Each agent's state is governed by a partially observed SDE, where the diffusion term depends on the state, control, state-average $x^{(N)}$ and control-average $u^{(N)}$ of all agents. Notably, the appearance of common noise term allows that the weight matrices in the cost functional can be indefinite. It looks that our paper is the first one to study indefinite partially observed control problem.

(2) In all existing literature on partially observed LQ problem (see, e.g. \cite{WW2008,Wang2015,BFH2021,CDW2024}), the control variable cannot enter the diffusion term of state. Fortunately, we have addressed this limitation. By introducing a common observation process $\theta$, our model allows the control variable to enter the diffusion term related common noise. Then we extend the backward separation approach to partially observed control problems of such complex systems.

(3) In the investigation of indefinite control problems, studying the solvability of Riccati equation (see e.g. \cite{CLZ1998,CZ2000,RMZ2002,Xu2020}) or assuming the uniformly convex condition (see e.g. \cite{SLY2016,Wang20202,SXY2021}) are the main methods. Inspired by \cite{RMZ2002,Y2013}, we propose a novel approach to solve the indefinite LQ partially observed MFG by using a relaxed compensator through a flexible condition (Condition (RC)), which can be easily verified. The existence of relaxed compensator can imply the solvability of indefinite Riccati equation (see Theorem \ref{thm8314}) and the uniform convexity of the cost functional w.r.t. control (see Lemma \ref{rmk031337}). Moreover, by virtue of relaxed compensator and linear transformation, we establish the well-posedness of the Hamiltonian system \eqref{Hamiltonian}, which is an FBSDE that does not satisfy the monotonicity condition.

(4) The decentralized strategy has been proved to be an $\varepsilon$-Nash equilibrium. Inspired by the method of equivalent cost functional, we show that the indefinite cost functional of origin problem is equivalent to a standard cost functional. Furthermore, we obtain a useful inequality \eqref{important} (the boundedness of alternative control in the sense of $L^2$) which plays a key role in proving $\varepsilon$-Nash equilibrium without imposing additional assumption as in \cite{Xu2020,Wang20202}.

\subsection{Notations and terminology}

We denote the $m$-dimensional Euclidean space by $\mathbb{R}^m$ with norm  $|\cdot|$ and inner product  $\langle \cdot,\cdot\rangle$. $D^{\top}$ (resp. $D^{-1}$) denotes the transposition (resp. inverse) of $D$. $\mathbb S^m$ denotes the set of symmetric $m\times m$ matrices with real elements. If $D\in \mathbb{S}^n$ is positive definite (positive semi-definite), we write $D>(\geq)\ 0$. {Moreover, if an $\mathbb S^n$-valued deterministic function $D$ is uniformly positive definite, i.e. there exists $\lambda_0>0$ such that $D_t\geq \lambda_0 I_n$ for every $t\in [0,T]$, we write $D\gg 0$.} For a given Hilbert space $\mathbb H$, $L^2_{\mathscr{F}_T}(\mathbb{H})$ denotes the space of all $\mathbb H$-valued ${\mathscr{F}_T}$-measurable, square-integrable random variables; $L^{\infty}([0,T]; \mathbb{H})$ denotes the space of all $\mathbb H$-valued deterministic uniformly bounded functions; $C([0,T];\mathbb H)$ denotes the space of all $\mathbb H$-valued deterministic functions $\phi$ such that $\dot\phi$ is continuous; $L^2_{\mathscr{F}}([0,T];\mathbb{H})$ denotes the space of all $\mathbb H$-valued, $\mathscr{F}_t$-adapted, square-integrable processes; $S^2_{\mathscr{F}}([0,T];\mathbb{H})$ denotes the space of all $\mathbb H$-valued, $\mathscr{F}_t$-adapted continuous processes $\phi$ such that $ \mathbb{E}[\sup_{0\leq t\leq T}|\phi_t|^2] < \infty$. Let $\mathcal M_{\mathscr F}:=L^2_{\mathscr{F}}([0,T];\mathbb{H})\times L^2_{\mathscr{F}}([0,T];\mathbb{H})\times L^2_{\mathscr{F}}([0,T];\mathbb{H})$, $\bar{\mathcal M}_{\mathscr F}:=S^2_{\mathscr{F}}([0,T];\mathbb{H})\times \mathcal M_{\mathscr F}(\mathbb H)$.

The remaining sections are organized as follows. Section \ref{sec:problem} formulates the indefinite LQ partially observed MFG problem with common noise. In section \ref{sec:the}, we obtain the decentralized strategies using the backward separation approach and Hamiltonian approach. By virtue of Riccati equation, we derive the feedback representation of the decentralized strategies. The corresponding $\varepsilon$-Nash equilibrium has been verified in section \ref{sec:varepsilon}. In section \ref{sec:application}, we solve a mean-variance portfolio selection problem raised at the beginning of this paper.

\section{Problem Formulation}\label{sec:problem}
In this section, we would like to characterize the MFG problem proposed in Section \ref{motivation} more accurately. We define the observable filtration $\mathscr{F}_t^{y^i}:=\sigma\{y_s^i,\theta_s\}_{0\leq s\leq t}$ of $i$-th agent $\mathcal{A}_i$; $\mathscr{F}_t^y:=\bigvee_{i=1}^N \mathscr{F}_t^{y^i}$ denotes all observed information of LP system; $\mathscr{F}^{\theta}_t:=\sigma\{\theta_s\}_{0\leq s\leq t}$ denotes the information of common observation. For each agent, her strategy may be $\mathscr{F}_t^y$-adapted, which is the so-called centralized strategy. Note that the individual observation $y^i$ is a controlled process, then the circular dependence between the observation and control arises. Thus we need to solve this circular dependence. Let us define $x^{i,0}$ and $y^{i,0}$, for $i=1,2,\cdots, N$, by
\begin{equation}\label{x1}
    \begin{aligned}
      &\mathrm{d}x_t^{i,0}=\big(A_tx_t^{i,0}+\bar A_tx^{(N)}_{0,t}\big)\mathrm{d}t
      +\big(D_tx_t^{i,0}+ \bar D_tx^{(N)}_{0,t}\big)\mathrm{d}W_t^0 +\sigma_t\mathrm{d}W_t^i+\bar\sigma_t\mathrm{d}\overline W_t^i,\quad x_0^{i,0}=x,
    \end{aligned}
\end{equation}
\begin{equation}\label{Y1}
    \begin{aligned}
      \mathrm{d}y_t^{i,0}=\big(G_tx_t^{i,0}+\bar G_tx^{(N)}_{0,t}\big)\mathrm{d}t +\tilde{\sigma}_t\mathrm{d}\overline W_t^i,\quad
      y_0^{i,0}=0,
    \end{aligned}
\end{equation}
where $x^{(N)}_{0}:= \frac{1}{N}\sum_{i=1}^N x^{i,0}$. Let $u^i\in L^2_{\mathscr F}([0,T];\mathbb R^m)$ be a control process, define $x^{i,1}$ and $y^{i,1}$ by
\begin{equation}\label{x2}
    \begin{aligned}
      &\mathrm{d}{x}^{i,1}_t=\big(A_tx_t^{i,1}+B_tu_t^i+\bar A_tx^{(N)}_{1,t}+\bar B_tu^{(N)}_t+b_t\big)\mathrm{d}t\\
&\qquad\qquad+\big(D_tx_t^{i,1}+F_tu_t^i+\bar D_tx^{(N)}_{1,t}+\bar F_tu^{(N)}_t+\bar b_t\big)\mathrm{d}W_t^0,\quad x^{i,1}_0=0,
    \end{aligned}
\end{equation}
\begin{equation}\label{Y2}
    \begin{aligned}
      &\mathrm{d}y_t^{i,1}=\big(G_tx_t^{i,1}+H_tu_t^i+\bar G_tx^{(N)}_{1,t}+\bar H_tu^{(N)}_t+\tilde b_t\big)\mathrm{d}t,\quad
      y_0^{i,1}=0,
    \end{aligned}
\end{equation}
where $x^{(N)}_{1} := \frac{1}{N}\sum_{i=1}^N x^{i,1}$. We give the following assumption on the coefficients.

\textbf{(H1)}
   $A, \bar A, D, \bar D, G, \bar G\in L^{\infty}([0,T]; \mathbb{R}^{n\times n})$, $b, \bar b, \tilde{b}\in L^{\infty}([0,T];\mathbb{R}^{n})$, $\sigma, \bar\sigma, \tilde\sigma$ $\in L^2([0,T];\mathbb R^{n\times d})$, $B, \bar B,  F, \bar F, H, \bar H \in L^{\infty}([0,T];\mathbb{R}^{n\times m})$, $I, \check b, \check\sigma \in L^{\infty}([0,T]; \mathbb{R})$, $\check\sigma$ is non-degenerate, $x$ is a constant.

Under (H1), system \eqref{x1}-\eqref{Y2} admits a unique solution. We define $x^i=x^{i,0}+x^{i,1}$ and $y^i=y^{i,0}+y^{i,1}$. It is easy to check that $x^i$ (resp. $y^i$) is the unique solution of \eqref{xx} (resp. \eqref{YY}), and $x^{(N)}=x_{0}^{(N)}+x_{1}^{(N)}$. We also denote ${\mathscr{F}}_t^{y^0}=\sigma\{y_s^{i,0},\theta_s; 1\leq i\leq N\}_{0\leq s\leq t}$. To overcome circular dependency, we give the following set of strategies,
\begin{equation*}
  \begin{aligned}
    \mathscr{U}_{c}^{i,0}=\bigg\{u^i\ |\ u_t^i \text{ is an } {\mathscr{F}}_t^{y^0}&\text{-adapted process valued in } \mathbb{R}^m,\text{ such that } \mathbb{E}\bigg[\sup_{0\leq t\leq T}|u_t^i|^2 \bigg]<\infty \bigg\}.
  \end{aligned}
\end{equation*}

\begin{definition}\label{Def21}
  Define the admissible centralized strategy set $\mathscr{U}_{c}^{i}$ as the set of all controls $u^i$ satisfying $u^i\in \mathscr{U}_{c}^{i,0}$ and $u^i$ is $\mathscr{F}^y$-adapted.
\end{definition}
Then we have
\begin{lemma}\label{lemma24}
  For any $u^i\in \mathscr{U}_{c}^{i}$, it holds that $\mathscr{F}_t^y={\mathscr{F}}_t^{y^0}$.
\end{lemma}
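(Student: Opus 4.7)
The plan is to establish both inclusions by exploiting the linear decompositions $x^i = x^{i,0}+x^{i,1}$ and $y^i = y^{i,0}+y^{i,1}$, together with the fact recalled in the introduction that under (H1) the common observation satisfies $\mathscr{F}^\theta_t = \mathscr{F}^{W^0}_t$, so that the common noise $W^0$ is observable through $\theta$. Before proving either inclusion, observe that $\mathscr{F}^\theta_t \subseteq \mathscr{F}^{y^0}_t$ and $\mathscr{F}^\theta_t \subseteq \mathscr{F}^y_t$ by their very definitions, hence $W^0$ is adapted to both filtrations.

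To prove $\mathscr{F}^{y^0}_t \subseteq \mathscr{F}^y_t$, I would show that $y^{i,1}$ is $\mathscr{F}^y$-adapted, which then forces $y^{i,0} = y^i - y^{i,1} \in \mathscr{F}^y_t$. Since $u^i \in \mathscr{U}_c^i$ is by definition $\mathscr{F}^y$-adapted, $u^{(N)}$ is as well. Averaging \eqref{x2} yields a linear SDE for $x_1^{(N)}$ driven by $W^0$ with coefficients built from $u^{(N)}$ and the deterministic data, so $x_1^{(N)}$ is a measurable functional of $u^{(N)}$ and $W^0$, and hence $\mathscr{F}^y$-adapted. Substituting back in \eqref{x2} and solving the resulting linear SDE for $x^{i,1}$ shows that $x^{i,1}$ is a measurable functional of $u^i,u^{(N)},W^0$, hence $\mathscr{F}^y$-adapted. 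Finally, \eqref{Y2} is an ordinary (pathwise) integral of $\mathscr{F}^y$-adapted processes, giving $y^{i,1} \in \mathscr{F}^y_t$.

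For the reverse inclusion $\mathscr{F}^y_t \subseteq \mathscr{F}^{y^0}_t$ I would use the other half of the definition of $\mathscr{U}_c^i$, namely $u^i \in \mathscr{U}_c^{i,0}$, so $u^i$ (and therefore $u^{(N)}$) is $\mathscr{F}^{y^0}$-adapted. Since $W^0$ is also $\mathscr{F}^{y^0}$-adapted, exactly the same analysis of \eqref{x2}--\eqref{Y2} as in the previous paragraph shows that $x^{i,1}$ and $y^{i,1}$ are $\mathscr{F}^{y^0}$-adapted. Because $y^{i,0}$ and $\theta$ are $\mathscr{F}^{y^0}$-adapted by construction, we conclude that $y^i = y^{i,0}+y^{i,1} \in \mathscr{F}^{y^0}_t$, and hence $\mathscr{F}^y_t \subseteq \mathscr{F}^{y^0}_t$.

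The only mild technical point is ensuring the adaptedness of the solution of the controlled linear SDE \eqref{x2} with respect to the smaller filtrations $\mathscr{F}^y$ and $\mathscr{F}^{y^0}$; this is handled by first averaging to decouple $x_1^{(N)}$ and then applying standard Picard-iteration/variation-of-constants arguments, both of which only require the driver $W^0$ and the coefficients $u^i,u^{(N)}$ to be adapted to the filtration in question. Conceptually, this is exactly the place where the introduction of the common observation $\theta$ is indispensable: without the identity $\mathscr{F}^\theta = \mathscr{F}^{W^0}$, the control-in-diffusion term of \eqref{x2} would prevent $x^{i,1}$ (and hence $y^{i,1}$) from being recovered from the decentralized observations, and the backward separation principle would fail, as emphasized in Remark \ref{rem+1}.
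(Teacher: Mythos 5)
Your proof is correct and follows essentially the same route as the paper's: it uses the two halves of Definition \ref{Def21} ($u^i$ being both $\mathscr{F}^{y^0}$- and $\mathscr{F}^y$-adapted) to propagate adaptedness through \eqref{x2}--\eqref{Y2} and then concludes via $y^i=y^{i,0}+y^{i,1}$ and $y^{i,0}=y^i-y^{i,1}$. The only difference is that you spell out two points the paper leaves implicit — that $W^0$ is adapted to both filtrations through $\theta$ (since $\mathscr{F}^\theta=\mathscr{F}^{W^0}$) and that the coupling through $x^{(N)}_1$ is resolved by first averaging \eqref{x2} — which are welcome refinements rather than a different argument.
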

\begin{proof}
  For any $u^i\in \mathscr{U}_{c}^{i}$, we know that $u_t^i$ is ${\mathscr{F}}_t^{y^0}$-adapted. Then $x_t^{i,1}$ is ${\mathscr{F}}_t^{y^0}$-adapted by noticing \eqref{x2}, thus $y_t^{i,1}$ is also ${\mathscr{F}}_t^{y^0}$-adapted by \eqref{Y2}. Then $y^i=y^{i,0}+y^{i,1}$ is ${\mathscr{F}}_t^{y^0}$-adapted, that is $\mathscr{F}_t^y\subseteq {\mathscr{F}}_t^{y^0}$. According to the similar argument, we can obtain $ {\mathscr{F}}_t^{y^0}\subseteq \mathscr{F}_t^y$ via the equality $y^{i,0}=y^i-y^{i,1}$.
\end{proof}
{\begin{remark}\label{rem+1}
    To ensure the solvability of indefinite stochastic LQ control or game problems, the diffusion term in the state equation must involve the control variable, see \cite{CLZ1998,CZ2000}. However, existing theories of partially observed control problem cannot handle this situation, see \cite{WWX2018} . In fact, the linear decomposition \eqref{x1}-\eqref{Y2} can no longer decouple the control $u^i$ from the common noise $W^0$ in the control-dependent state \eqref{x2}. As a result, it becomes impossible to establish a connection between the filtrations $\mathscr F^y$ and $\mathscr F^{y^0}$. By introducing a common observation process $\theta$ for the common noise $W^0$, we propose a suitable definition of the strategy sets $\mathcal U_c^{i,0}$ and $\mathcal U_c^i$, under which Lemma \ref{lemma24} can be established within the $\mathscr F^y$. Moreover, a similar property holds for Lemma \ref{lemma32},  which serves a foundation step in the proof of Lemma \ref{lemma332}.
\end{remark}}

Let us recall the cost functional \eqref{cost} and introduce the following assumption.

\textbf{(H2)}
   $Q \in L^{\infty}([0,T];\mathbb{S}^n)$, $R \in L^{\infty}([0,T];\mathbb{S}^m)$, $S \in L^{\infty}([0,T]; \mathbb R^{n\times m})$, $q\in l^{\infty}([0,T];\mathbb R^n)$, $r\in L^{\infty}([0,T];\mathbb R^m)$, $L_T\in \mathbb{S}^n$, $l_T\in \mathbb R^n$, $\alpha_1, \alpha_2, \alpha_3, \beta_1, \beta_2$ are constants.

\begin{remark}
  Obviously,  for any given $x\in \mathbb R^n$ and any admissible strategy $u=(u^1,u^2,\cdots,u^N)$, the cost functional \eqref{cost} is well-defined under (H1)-(H2). It is worth pointing out that we do not impose any positive-definiteness/non-negativeness conditions on $Q$, $R$ and $L_T$. It looks like our paper is the first one to study the indefinite partially observed stochastic control problems.
\end{remark}

A basic solution for \eqref{xx}-\eqref{cost} is a Nash equilibrium $\hat u=(\hat u^{1},\hat u^{2},\cdots,\hat u^{N})$, where $u^i\in \mathcal U_c^i$, for each $1\leq i\leq N$. However, such a solution is impractical when the LP system consists of a large number of agents, due to the prohibitive computational complexity and unrealistic information requirements. Hence, we aim to establish the $\varepsilon$-Nash equilibrium.

\section{The Limiting Control Problem}\label{sec:the}

To design the decentralized strategies, we need to study the associated limiting problem when the agent number $N$ tends to infinity. Suppose that $(x^{(N)},u^{(N)})$ are approximated by $(x^0,u^0)$, and here $(x^0,u^0) $ is some $\mathscr{F}^{\theta}$-adapted process pair which will {be defined later (see \eqref{px0}-\eqref{92133})}. We also assume that $x^{(N)}_{0}$ and $x^{(N)}_{1}$ are respectively approximated by $x^{0,0}$ and $x^{1,0}$ with $x^{0,0}+x^{1,0}=x^0$. Then we introduce the following auxiliary limiting state,
\begin{equation}\label{lxx}
    \begin{aligned}
      \mathrm{d}X_t^i&=\big(A_tX_t^i+B_tu_t^i+\bar A_tx_t^0+\bar B_tu_t^0+b_t\big) \mathrm{d}t +\sigma_t\mathrm{d}W_t^i\\
      &\quad +\big(D_tX_t^i+F_tu_t^i+\bar D_tx_t^0+\bar F_tu_t^0+\bar b_t\big)\mathrm{d}W_t^0+ \bar \sigma_t\mathrm{d}\overline W_t^i,\quad
      X_0^i=x,
    \end{aligned}
\end{equation}
and the limiting individual observation process,
\begin{equation}\label{lYY}
    \begin{aligned}
      &\mathrm{d}Y_t^i=\big(G_tX_t^i+H_tu_t^i+\bar H_tx_t^0+\bar G_t u_t^0+\tilde b_t\big)
      \mathrm{d}t+\tilde{\sigma}_t\mathrm{d}\overline W_t^i,\quad
      Y_0^i=0.
    \end{aligned}
\end{equation}
{Moreover, the common observation process $\theta$ still satisfies \eqref{theta}, that is,
\begin{equation*}
    \mathrm{d}\theta=(I_t\theta_t+\check b_t)\mathrm{d}t+\check \sigma_t\mathrm{d}W_t^0,\quad \theta_0=0.
\end{equation*}}
The above limiting state and limiting individual observation can both be decomposed as two parts, as in the following arguments.
Let $X^{i,0}$ and $Y^{i,0}$ be respectively given by
\begin{equation}\label{lx1}
    \mathrm{d}X_t^{i,0}=(A_tX_t^{i,0}+\bar A_tx^{0,0}_t)\mathrm{d}t  +(D_tX_t^{i,0}+ {\color{blue}\bar D_t}x^{0,0}_t)\mathrm{d}W_t^0 +\sigma_t\mathrm{d}W_t^i+\bar\sigma_t\mathrm{d}\overline W_t^i,\quad X_0^{i,0}=x,
\end{equation}
and
\begin{equation}\label{lY1}
    \mathrm{d}Y_t^{i,0}=\big(G_tX_t^{i,0}+\bar G_tx^{0,0}_t\big)\mathrm{d}t +\tilde{\sigma}_t\mathrm{d}\overline W_t^i,\quad       Y_0^{i,0}=0.
\end{equation}
Let $X^{i,1}$ and $Y^{i,1}$ be the solutions of
\begin{equation}\label{lx2}
    \begin{aligned}
        &\mathrm{d}{X}^{i,1}_t=\big(A_tX_t^{i,1}+B_tu_t^i+\bar A_tx^{1,0}_t+\bar B_tu_t^0+b_t\big)\mathrm{d}t \\
      &\qquad\quad+\big(D_tX_t^{i,1}+F_tu_t^i+\bar D_tx^{1,0}_t+\bar F_tu_t^0+\bar b_t\big)\mathrm{d}W_t^0,\quad
      X^{i,1}_0=0,
    \end{aligned}
\end{equation}
and
\begin{equation}\label{lY2}
    \mathrm{d}Y_t^{i,1}=\big(G_tX_t^{i,1}+H_tu_t^i+\bar G_tx^{1,0}_t+\bar H_tu_t^0+\tilde b_t\big)\mathrm{d}t,\quad
      Y_0^{i,1}=0,
\end{equation}
respectively.
We then have $X^{i,0}+X^{i,1}=X^i$ and $Y^{i,0}+Y^{i,1}=Y^i$.

Next, we aim to design the decentralized strategies. We mention that there is a circular dependency between the limiting observation process $Y^i$ and the control strategy $u^i$. To overcome this difficulty, we set $\mathscr{F}^{Y^i}_t=\sigma\{Y_s^i,\theta_s\}_{0\leq s\leq t}$ and $\mathscr{F}^{Y^{i,0}}_t=\sigma\{Y_s^{i,0},\theta_s\}_{0\leq s\leq t}$ and define
\begin{equation*}
  \begin{aligned}
    \mathscr{U}_{d}^{i,0}=\bigg\{u^i\ |\ u_t^i \text{ is an } {\mathscr{F}}_t^{Y^{i,0}}&\text{-adapted process valued in } \mathbb{R}^m,\text{ such that } \mathbb{E}\bigg[\sup_{0\leq t\leq T}|u_t^i|^2  \bigg]<\infty \bigg\}.
  \end{aligned}
\end{equation*}
\begin{definition}\label{def2131}
  Define admissible decentralized strategy set $\mathscr{U}_{d}^{i}$ as the set of all controls $u^i$ satisfying $u^i\in \mathscr{U}_{d}^{i,0}$ and $u^i$ is $\mathscr{F}^{Y^i}$-adapted.
\end{definition}

The associated limiting cost functional becomes
\begin{equation}\label{lcost}
  \begin{aligned}
     J_i(u^i_{\cdot})&=\frac{1}{2}\mathbb{E}\bigg[\int_0^T\big\{\langle Q_t(X_t^i -\alpha_1 x_t^0)+2q_t,X_t^i-\alpha_1 x_t^0\rangle +\langle R_t(u_t^i-\beta_1u_t^0)+2r_t, u_t^i-\beta_1u_t^0\rangle\\
    &\qquad+2\langle S_t(u_t^i-\beta_2u_t^0), X_t^i-\alpha_2 x_t^0\rangle\big\}\mathrm{d}t+\langle L_T(X_T^i-\alpha_3 x_T^0)+ 2l_T, X_T^i-\alpha_3 x_T^0\rangle\bigg],
  \end{aligned}
\end{equation}
and the auxiliary partially observed LQ problem can be formulated as follows:

\textbf{Problem (MFD).}
  For the $i$-th agent $\mathcal{A}_i$, $i=1,2,\cdots,N$, {find a priori strategy $\bar{u}^i\in \mathscr{U}_{d}^{i}$, depending on the parameter $(x^0,u^0)$,  such that}
  \begin{equation*}
    J_i(\bar{u}_{\cdot}^i)=\inf_{u^i\in \mathscr{U}_{d}^{i}}J_i(u_{\cdot}^i).
  \end{equation*}

Problem (MFD) is called well-posed if the infimum of $J_i(u_{\cdot}^i)$ is finite. If Problem (MFD) is well-posed and the infimum of the cost functional can be achieved, then Problem (MFD) is said to be solvable. Any $\bar u^i$ satisfies $J_i(\bar u_{\cdot}^i)=\inf_{u^i\in \mathscr U_{d}^i} J_i(u^i_{\cdot})$ is called an optimal control of Problem (MFD), and related $\bar X^i$ (see \eqref{lxx}) and $J_i(\bar u_{\cdot}^i)$ (see \eqref{lcost}) are called the optimal state and the optimal cost functional, respectively. Then $(\bar X^i, \bar u^i)$ is called an optimal pair of Problem (MFD). {Moreover, let $\bar Y^i$ be the optimal observation related to $\bar u^i$. For a given stochastic process $\Phi$,
\begin{equation}\label{filteringsymbol}
    \hat{\Phi}=\mathbb E[\Phi|\mathcal F^{\bar Y^i}], \qquad \text{and} \qquad  \check \Phi=\mathbb E[\Phi|\mathscr F^{\theta}],
\end{equation}
respectively denote the optimal filtering of $\Phi$ with respect to the filtration $\mathscr F^{\bar Y^i}$ and $\mathscr F^{\theta}$, where $\mathscr F^{\bar Y^i}_t=\sigma\{\bar Y^i_s,\theta_s\}_{0\leq s\leq t}$ and $\mathscr F^{\theta}_t=\sigma\{\theta_s\}_{0\leq s\leq t}$.}  Similar to Lemma \ref{lemma24}, we obtain
\begin{lemma}\label{lemma32}
  For any $u^i\in \mathscr{U}_{d}^{i}$, $\mathscr{F}_t^{Y^i}=\mathscr{F}_t^{Y^{i,0}}$.
\end{lemma}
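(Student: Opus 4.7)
The plan is to mimic the proof of Lemma \ref{lemma24} verbatim, but with the decentralized observation filtration $\mathscr{F}^{Y^{i,0}}$ replacing $\mathscr{F}^{y^0}$, and to rely crucially on the fact (established from (H1) and \cite[Theorem 7.16]{Liptser1977}) that the common observation recovers the common noise, i.e., $\mathscr{F}^{\theta} = \mathscr{F}^{W^0}$. This is exactly the point anticipated in Remark \ref{rem+1}: once $W^0$ is observable through $\theta$, the $dW^0$-term appearing in the control-dependent decomposition \eqref{lx2} does not obstruct the adaptedness argument.

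For the inclusion $\mathscr{F}_t^{Y^i} \subseteq \mathscr{F}_t^{Y^{i,0}}$, I would take $u^i \in \mathscr{U}_d^i$, so by definition $u^i$ is $\mathscr{F}^{Y^{i,0}}$-adapted. The limiting processes $u^0$ and $x^{1,0}$ are $\mathscr{F}^{\theta}$-adapted by construction, and since $\mathscr{F}^{\theta} \subseteq \mathscr{F}^{Y^{i,0}}$ (the generator $\theta$ is included in the definition of $\mathscr{F}^{Y^{i,0}}$), both are $\mathscr{F}^{Y^{i,0}}$-adapted. Furthermore, $W^0$ itself is $\mathscr{F}^{\theta}$-adapted, hence $\mathscr{F}^{Y^{i,0}}$-adapted. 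Therefore in the SDE \eqref{lx2}, all coefficient inputs $(u^i, u^0, x^{1,0}, b, \bar b)$ and the driving noise $W^0$ are $\mathscr{F}^{Y^{i,0}}$-adapted, and by pathwise uniqueness of the SDE under (H1), $X^{i,1}$ is $\mathscr{F}^{Y^{i,0}}$-adapted. Substituting into the pure-drift equation \eqref{lY2} shows $Y^{i,1}$ is $\mathscr{F}^{Y^{i,0}}$-adapted as well, and therefore $Y^i = Y^{i,0} + Y^{i,1}$ is $\mathscr{F}^{Y^{i,0}}$-adapted.

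The reverse inclusion $\mathscr{F}_t^{Y^{i,0}} \subseteq \mathscr{F}_t^{Y^i}$ is handled symmetrically: from Definition \ref{def2131}, $u^i$ is also $\mathscr{F}^{Y^i}$-adapted, and $\mathscr{F}^{\theta} \subseteq \mathscr{F}^{Y^i}$, so the same reasoning through \eqref{lx2}--\eqref{lY2} gives that $X^{i,1}$ and $Y^{i,1}$ are $\mathscr{F}^{Y^i}$-adapted. Hence $Y^{i,0} = Y^i - Y^{i,1}$ is $\mathscr{F}^{Y^i}$-adapted, closing the equality.

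The only genuinely delicate point, compared to a naive copy of Lemma \ref{lemma24}, is justifying that $W^0$ (not merely $\theta$) may be treated as adapted to the observation filtration when we read off \eqref{lx2}: this is precisely where the non-degeneracy of $\check\sigma$ in (H1) and the identity $\mathscr{F}^{\theta}=\mathscr{F}^{W^0}$ are indispensable. Everything else is a routine SDE-adaptedness argument based on strong uniqueness, and no filtering calculation is required.
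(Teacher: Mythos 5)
Your proposal is correct and is essentially the paper's own argument: the paper gives no separate proof of Lemma \ref{lemma32}, merely noting it is ``similar to Lemma \ref{lemma24}'', and your write-up is exactly that adaptation --- adaptedness of $u^i$, hence of $X^{i,1}$ via \eqref{lx2} and of $Y^{i,1}$ via \eqref{lY2}, giving both inclusions through $Y^i=Y^{i,0}+Y^{i,1}$ and $Y^{i,0}=Y^i-Y^{i,1}$. Your explicit appeal to the non-degeneracy of $\check\sigma$ and $\mathscr{F}^{\theta}=\mathscr{F}^{W^0}$ to handle the $\mathrm{d}W^0$-term is precisely the point the paper flags in Remark \ref{rem+1}, so there is no divergence in method.
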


Let us give a useful lemma, which implies that we can find an optimal strategy $u^i\in \mathscr{U}_{d}^{i,0}$ instead of $u^i\in \mathscr{U}_{d}^{i}$ to minimize $J_i$. The proof one can see Appendix \ref{Appendix A}.
\begin{lemma}\label{lemma332}
  Under (H1)-(H2), we have
  \begin{equation*}
    \inf_{u^i\in \mathscr{U}_{d}^{i}}{J}_i(u^i_{\cdot})=\inf_{u^i\in \mathscr{U}_{d}^{i,0}}{J}_i(u^i_{\cdot})
  \end{equation*}
\end{lemma}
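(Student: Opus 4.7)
The plan is to prove the lemma by establishing the stronger statement $\mathscr{U}_{d}^{i,0} = \mathscr{U}_{d}^{i}$, which trivially forces the two infima to coincide. The inclusion $\mathscr{U}_{d}^{i} \subseteq \mathscr{U}_{d}^{i,0}$ is immediate from Definition \ref{def2131} and already yields $\inf_{u^i \in \mathscr{U}_{d}^{i,0}} J_i(u^i_\cdot) \leq \inf_{u^i \in \mathscr{U}_{d}^{i}} J_i(u^i_\cdot)$. The task therefore reduces to showing that any $u^i \in \mathscr{U}_{d}^{i,0}$ is automatically $\mathscr{F}^{Y^i}$-adapted, which in turn follows if $\mathscr{F}_t^{Y^i} = \mathscr{F}_t^{Y^{i,0}}$ holds for every $t \in [0,T]$ and every $u^i \in \mathscr{U}_{d}^{i,0}$.

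For the easier direction $\mathscr{F}_t^{Y^i} \subseteq \mathscr{F}_t^{Y^{i,0}}$, I would argue exactly as in the proof of Lemma \ref{lemma24}: under (H1), $\check\sigma$ is non-degenerate, hence $\mathscr{F}^\theta = \mathscr{F}^{W^0} \subseteq \mathscr{F}^{Y^{i,0}}$; since $u^i$, $x^{1,0}$ and $u^0$ are all $\mathscr{F}^{Y^{i,0}}$-adapted, the SDE \eqref{lx2} with $W^0$-driven diffusion yields $X^{i,1}$ as an $\mathscr{F}^{Y^{i,0}}$-adapted solution, and then the ODE \eqref{lY2} gives $Y^{i,1}$ also $\mathscr{F}^{Y^{i,0}}$-adapted. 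Therefore $Y^i = Y^{i,0} + Y^{i,1}$ is $\mathscr{F}^{Y^{i,0}}$-adapted, establishing this inclusion.

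The harder direction $\mathscr{F}_t^{Y^{i,0}} \subseteq \mathscr{F}_t^{Y^i}$ is the main obstacle and is precisely where the novelty over \cite{WW2008,Wang2015,BFH2021,CDW2024} surfaces, since the control variable now enters the $W^0$-diffusion of \eqref{lx2}. My plan here is to reconstruct $Y^{i,0}$ pathwise from $(Y^i,\theta)$ by inverting the identity $Y^{i,0} = Y^i - Y^{i,1}$: writing $u^i$ as a measurable functional of $(Y^{i,0}, \theta)$ and substituting into the forward linear system \eqref{lx2}--\eqref{lY2} converts this into a fixed-point equation for $Y^{i,0}$ given the data $(Y^i, \theta)$. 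Uniqueness of such a fixed point would follow from Gronwall's inequality using the bounded coefficients in (H1), after which $Y^{i,0}$ becomes an $\mathscr{F}^{Y^i}$-measurable functional.

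The genuine technical obstacle is that the functional $Y^{i,0} \mapsto u^i$ is only measurable, not Lipschitz, so a naive Picard iteration on the fixed-point equation fails. To get around this, I would approximate $u^i$ by $\mathscr{F}^{Y^{i,0}}$-adapted step functions along a refining partition of $[0,T]$, for which the reconstruction on each subinterval reduces to an autonomous linear SDE--ODE with piecewise-constant control that can be solved explicitly, so the corresponding $Y^{i,0}$ is manifestly $\mathscr{F}^{Y^i}$-measurable. Passing to the $L^2$-limit using the uniform square-integrability built into the definition of $\mathscr{U}_{d}^{i,0}$ and the $L^2$-continuity of the solution map of the linear system then transfers $\mathscr{F}^{Y^i}$-measurability to the limiting $Y^{i,0}$ in full generality, completing the reverse inclusion and hence the lemma.
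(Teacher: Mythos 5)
Your strategy hinges on the claim that $\mathscr{U}_{d}^{i,0}=\mathscr{U}_{d}^{i}$, i.e.\ that $\mathscr{F}_t^{Y^{i,0}}\subseteq\mathscr{F}_t^{Y^i}$ for \emph{every} $u^i\in\mathscr{U}_{d}^{i,0}$, and this is exactly where the argument breaks. First, the reconstruction step is unsupported: once $u^i$ is only a measurable functional of $(Y^{i,0},\theta)$, the fixed-point equation you write for $Y^{i,0}$ given $(Y^i,\theta)$ has no Lipschitz (or even continuous) dependence on the unknown, so Gronwall gives neither uniqueness nor stability --- the same reason you cite for Picard failing also kills Gronwall. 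Moreover, even pathwise uniqueness of such a fixed point would not by itself yield that the solution is a \emph{measurable} functional of $(Y^i,\theta)$, which is what adaptedness requires. Second, the limiting step is invalid: for piecewise-constant approximants $u^i_n$ one does get (as in Step 1 of the paper's Appendix \ref{Appendix A}) that $\mathscr{F}_t^{Y^{i,0}}=\mathscr{F}_t^{Y^{i}_n}$, where $Y^i_n$ is the observation generated by $u^i_n$; but $L^2$-convergence $Y^i_n\to Y^i$ does not transfer measurability with respect to $\sigma\{Y^i_{n,s},\theta_s\}$ into measurability with respect to $\sigma\{Y^i_s,\theta_s\}$ --- $\sigma$-algebras generated by the conditioning processes are not continuous under $L^2$ limits, and there is no inclusion between $\mathscr{F}^{Y^i_n}$ and $\mathscr{F}^{Y^i}$ to exploit. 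So the reverse inclusion, and with it the set equality, is not established (and the paper deliberately never claims it).

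The paper's proof takes a weaker but sufficient route that you should compare with: it shows only that $\mathscr{U}_{d}^{i}$ is \emph{dense} in $\mathscr{U}_{d}^{i,0}$ in $L^2_{\mathscr{F}^{Y^{i,0}}}([0,T];\mathbb{R}^m)$, using precisely the piecewise-constant, one-interval-delayed averages $u^i_n$ you propose (shown inductively, via Lemma \ref{lemma32}, to lie in $\mathscr{U}_{d}^{i}$ because on each subinterval $u^i_n$ is measurable with respect to $\mathscr{F}^{Y^{i,0}}_{k\delta_n}=\mathscr{F}^{Y^i_n}_{k\delta_n}$), and then proves the continuity statement $J_i(u^i_{n,\cdot})\to J_i(u^i_\cdot)$ by standard SDE estimates. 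Density plus continuity of the cost gives $\inf_{\mathscr{U}_{d}^{i}}J_i\le\inf_{\mathscr{U}_{d}^{i,0}}J_i$ without ever identifying the two admissible sets. Your proposal omits this cost-continuity step entirely because it aims at the stronger set identity; to repair your argument you would either have to prove that identity by a genuinely different (and currently missing) measurable-inversion argument, or fall back on the density-plus-continuity scheme, which is the paper's proof.
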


\subsection{Optimal decentralized strategy}
In this subsection, we could get the optimal strategy of Problem (MFD) by virtue of the stochastic maximum principle.

\begin{proposition}\label{Thm34}
  Let (H1)-(H2) hold. The admissible strategy $\bar{u}^i\in \mathscr{U}_{d}^{i}$ is an optimal decentralized strategy of Problem (MFD) if and only if the following stationary condition holds:
  \begin{equation}\label{optimalcontrol}
    B^{\top}_t\mathbb E\big[\varphi_t^i\big|\mathscr{F}_t^{\bar{Y}^{i}}\big] +F^{\top}_t\mathbb E\big[\eta_t^i\big|\mathscr{F}_t^{\bar{Y}^{i}}\big]+ S_t\mathbb E\big[\bar X_t^i-\alpha_2 x_t^0\big|\mathscr{F}_t^{\bar{Y}^{i}}\big] +R_t(\bar u_t^i-\beta_1u_t^0)+r_t=0,
\end{equation}
  and the following convexity condition holds for any $v^i\in \mathscr{U}_{d}^{i}$,
  \begin{equation}\label{92121}
    \mathbb E\bigg[\int_0^T\big\{\langle Q_t \tilde X_t^i,\tilde X_t^i\rangle +2\langle S_tv^i_t, \tilde X_t^i\rangle +\langle R_tv_t^i,v_t^i\rangle\big\}\mathrm{d}t+\langle L_T\tilde X_T^i,\tilde X_T^i\rangle\bigg]\geq 0.
  \end{equation}
  Here $\tilde X^i$ solves the following SDE,
  \begin{equation}\label{variational}
    \begin{aligned}
      &\mathrm{d}\tilde X_t^i=\big(A_t\tilde X_t^i+B_tv_t^i\big) \mathrm{d}t  +\big(D_t\tilde X_t^i+F_tv_t^i\big)\mathrm{d}W_t^0,\quad
      \tilde X_0^i=0.
    \end{aligned}
\end{equation}
and $(\varphi^i,\eta^i,\zeta^i,\vartheta^i)\in \mathcal M_{\mathscr F^i}$ solves the following BSDE,
  \begin{equation}\label{adjoint}
 \left\{
    \begin{aligned}
      &\mathrm{d}\varphi_t^i=-\big\{A^{\top}_t\varphi_t^i+D^{\top}_t\eta_t^i+Q_t(\bar X_t^i- \alpha_1 x_t^0)+S_t(\bar u_t^i-\beta_2 u_t^0)+ q_t\big\}\mathrm{d}t+\eta_t^i\mathrm{d}W_t^0 +\zeta_t^i\mathrm{d} W_t^i+\vartheta_t^i\mathrm{d}\overline W_t^i,\\
      &\varphi_T^i=L_T(\bar X_T^i-\alpha_3 x_T^0)+l_T,
    \end{aligned}
  \right.
\end{equation}
\end{proposition}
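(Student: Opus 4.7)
The approach is the standard convex variational/maximum-principle argument, adapted to the partial-observation setting. The plan is to perturb $\bar u^i$ inside the admissible class $\mathscr{U}_d^i$, expand the quadratic cost functional to first and second order, identify the first-order term with the adjoint process via an It\^o duality calculation, and finally take conditional expectation with respect to the observation filtration to obtain \eqref{optimalcontrol}.

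First, fix $v^i \in \mathscr{U}_d^i$ and $\epsilon \in \mathbb{R}$, set $u_\epsilon^i := \bar u^i + \epsilon v^i$, and let $X^{i,\epsilon}$ be the corresponding state under \eqref{lxx}. By linearity, $X^{i,\epsilon} = \bar X^i + \epsilon \tilde X^i$, where $\tilde X^i$ solves exactly the variational SDE \eqref{variational}. Substituting into \eqref{lcost} and using that the cost is quadratic in $(X^i,u^i)$ gives
\begin{equation*}
    J_i(u_\epsilon^i) - J_i(\bar u^i) = \epsilon\, \Delta_1(v^i) + \epsilon^2\, \Delta_2(v^i),
\end{equation*}
where $2\,\Delta_2(v^i)$ equals the left-hand side of \eqref{92121}, and $\Delta_1(v^i)$ is a linear functional of $v^i$ involving $\tilde X^i$ and the running/terminal cost coefficients evaluated along $(\bar X^i, \bar u^i, x^0, u^0)$.

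Second, apply It\^o's formula to $\langle \varphi_t^i, \tilde X_t^i\rangle$ using \eqref{adjoint} and \eqref{variational}, then take expectation; the martingale parts vanish thanks to the integrability $(\varphi^i,\eta^i,\zeta^i,\vartheta^i)\in \mathcal{M}_{\mathscr{F}^i}$ together with the linear-growth bounds on $\tilde X^i$. The terminal condition $\varphi_T^i = L_T(\bar X_T^i - \alpha_3 x_T^0) + l_T$ together with the driver of \eqref{adjoint} cancels precisely the terms in $\Delta_1(v^i)$ that are linear in $\tilde X^i$, leaving
\begin{equation*}
    \Delta_1(v^i) = \mathbb E \int_0^T \big\langle B_t^\top \varphi_t^i + F_t^\top \eta_t^i + S_t(\bar X_t^i - \alpha_2 x_t^0) + R_t(\bar u_t^i - \beta_1 u_t^0) + r_t,\, v_t^i\big\rangle\, \mathrm{d}t.
\end{equation*}
Now, by Lemma \ref{lemma332} it suffices to let $v^i$ range over $\mathscr{U}_d^{i,0}$, so $v^i$ is $\mathscr{F}^{Y^{i,0}}_t$-adapted; and by Lemma \ref{lemma32} (applied to $\bar u^i$) this filtration coincides with $\mathscr{F}^{\bar Y^i}_t$. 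Since $R_t(\bar u_t^i-\beta_1 u_t^0)+r_t$ is already $\mathscr{F}^{\bar Y^i}_t$-measurable, projecting the remaining terms onto $\mathscr{F}^{\bar Y^i}_t$ inside the inner product yields
\begin{equation*}
    \Delta_1(v^i) = \mathbb E \int_0^T \big\langle \mathcal I_t^i,\, v_t^i\big\rangle \mathrm{d}t,
\end{equation*}
with $\mathcal I_t^i$ the left-hand side of \eqref{optimalcontrol}.

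Finally, necessity and sufficiency follow from elementary convex optimization. If $\bar u^i$ is optimal, then $\Delta_1(v^i)=0$ and $\Delta_2(v^i)\geq 0$ for every $v^i$; arbitrariness of $v^i\in \mathscr{U}_d^{i,0}$ combined with a standard measurable-selection argument forces $\mathcal I_t^i = 0$ $\mathrm{d}t\otimes \mathrm{d}\mathbb P$-a.e., giving \eqref{optimalcontrol} and \eqref{92121}. Conversely, if both conditions hold then $J_i(u_\epsilon^i)-J_i(\bar u^i)=\epsilon^2\Delta_2(v^i)\geq 0$ (taking $\epsilon=1$), so $\bar u^i$ is optimal. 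The principal obstacle is the duality step together with the subsequent conditioning: one must verify that passing to $\mathbb E[\cdot\mid \mathscr{F}^{\bar Y^i}_t]$ inside the integral is legitimate, which crucially relies on $v^i$ being adapted to the strictly smaller observation filtration $\mathscr{F}^{\bar Y^i}$ and on the backward-separation identification $\mathscr{F}^{\bar Y^i}=\mathscr{F}^{Y^{i,0}}$ from Lemma \ref{lemma32}; this is exactly what produces the filtered quantities $\mathbb E[\varphi^i\mid\mathscr{F}^{\bar Y^i}]$ and $\mathbb E[\eta^i\mid\mathscr{F}^{\bar Y^i}]$ in \eqref{optimalcontrol} instead of $\varphi^i$ and $\eta^i$ themselves.
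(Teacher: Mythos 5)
Your proposal is correct and follows essentially the same route as the paper's proof: a convex perturbation $u^{i,\epsilon}=\bar u^i+\epsilon v^i$, the quadratic expansion of $J_i$, It\^o duality with $\langle \varphi^i,\tilde X^i\rangle$, passage to $\mathscr{U}_d^{i,0}$ via Lemma \ref{lemma332}, identification $\mathscr{F}^{\bar Y^{i,0}}=\mathscr{F}^{\bar Y^i}$ via Lemma \ref{lemma32}, and conditioning on the observation filtration to produce the filtered stationarity condition, with sufficiency from the convexity condition. The only differences are cosmetic (you allow $\epsilon\in\mathbb{R}$ where the paper takes $0<\epsilon<1$ and argues via arbitrariness of $v^i$), so no further comment is needed.
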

\begin{proof}
If $\bar{u}^i$ is an optimal strategy of Problem (MFD), Lemma \ref{lemma332} yields that ${J}_i(\bar{u}_{\cdot}^i)= \inf_{u^i\in\mathscr{U}_{d}^{i,0}}{J}_i(u_{\cdot}^i)$. For any $v^i\in \mathscr{U}_{d}^{i,0}$, define $X^{i,\epsilon}$ be the solution of following SDE with $u^{i,\epsilon}:= \bar{u}^i+\epsilon v^i\in \mathscr{U}_{d}^{i,0}$, $0< \epsilon < 1$,
\begin{equation*}
    \begin{aligned}
      \mathrm{d}X^{i,\epsilon}_t&=\big(A_tX^{i,\epsilon}_t+B_tu^{i,\epsilon}_t+\bar A_tx_t^0+\bar B_t u_t^0+b_t\big) \mathrm{d}t +\sigma_t\mathrm{d}W_t^i\\
      &\quad +\big(D_tX^{i,\epsilon}_t+F_tu^{i,\epsilon}_t+\bar D_tx_t^0+\bar F_tu_t^0+\bar b_t\big)\mathrm{d}W_t^0+\bar \sigma_t\mathrm{d}\overline W_t^i,\quad
      X^{i,\epsilon}_0=x.
    \end{aligned}
\end{equation*}
Then one can check that $\tilde X^i:= \frac{X^{i,\epsilon}-\bar X^i}{\epsilon}$ is independent of $\epsilon$ and satisfies \eqref{variational}. Applying It\^o's formula to $\langle \varphi_t^i,\tilde X_t^i\rangle$, then it follows that
\begin{equation*}
\begin{aligned}
  J_i(u_{\cdot}^{i,\epsilon})=J_i(\bar u_{\cdot}^i)&+\epsilon \mathbb E\bigg[\int_0^T\big\{\langle B^{\top}_t\varphi_t^i +F^{\top}_t\eta_t^i+ S_t^{\top}(\bar X_t^i-\alpha_2 x_t^0)+ R_t(\bar u_t^i-\beta_1 u_t^0) +r_t,v_t^i\rangle\big\} \mathrm{d}t\bigg]\\
  &+\frac{\epsilon^2}{2}\mathbb E\bigg[\int_0^T\big\{\langle Q_t \tilde X_t^i,\tilde X_t^i\rangle +2\langle S_tv^i_t, \tilde X_t^i\rangle +\langle R_tv_t^i,v_t^i\rangle\big\}\mathrm{d}t+\langle L_T\tilde X_T^i,\tilde X_T^i\rangle\bigg].
\end{aligned}
\end{equation*}
From $J_i(u^{i,\epsilon}_{\cdot})\geq J_i(\bar u_{\cdot}^i)$, we have that \eqref{92121} holds and
\begin{equation*}
  \mathbb E\bigg[\int_0^T\Big\{\langle B^{\top}_t\varphi_t^i +F^{\top}_t\eta_t^i+ S_t^{\top}(\bar X_t^i-\alpha_2 x_t^0)+ R_t(\bar u_t^i-\beta_2 u_t^0) +r_t,v_t^i\rangle\Big\} \mathrm{d}t\bigg]=0,
\end{equation*}
which yields that,
\begin{equation*}
    B^{\top}_t\mathbb E\big[\varphi_t^i\big|\mathscr{F}_t^{\bar{Y}^{i,0}}\big] +F^{\top}_t\mathbb E\big[\eta_t^i\big|\mathscr{F}_t^{\bar{Y}^{i,0}}\big]+ S_t\mathbb E\big[\bar X_t^i-\alpha_2 x_t^0\big|\mathscr{F}_t^{\bar{Y}^{i,0}}\big] +R_t(\bar u_t^i-\beta_1u_t^0)+r_t=0.
\end{equation*}
Noticing $\bar u^i\in \mathscr{U}_{d}^{i}$, we have $\mathscr{F}^{\bar Y^{i,0}}_t=\mathscr{F}^{\bar Y^i}_t$ by Lemma \ref{lemma32}, then we obtain \eqref{optimalcontrol}.

In addition, for any given $v^i\in \mathcal U_{d}^i$, we can easily check that the difference between $J_i(v^i_{\cdot})$ and $J_i(\bar u^i_{\cdot})$ are $J_i(v^i_{\cdot})-J_i(\bar u^i_{\cdot})\geq 0$, which implies $\bar u^i$ given by \eqref{optimalcontrol} is an optimal control.
\end{proof}

Now, combining \eqref{lxx}, \eqref{optimalcontrol} and \eqref{adjoint}, recalling notations \eqref{filteringsymbol}, we obtain the  Hamiltonian system for agent $\mathcal{A}_i$:
\begin{equation}\label{Hamiltonian}
  \left\{
    \begin{aligned}
      &\mathrm{d}\bar X_t^i\!=\!\big(A_t\bar X_t^i\!+\!B_t\bar u_t^i\!+\!\bar A_tx_t^0\!+\!\bar B_tu_t^0\!+\!b_t\big) \mathrm{d}t
        \!+\!\big(D_t\bar X_t^i\!+\!F_t\bar u_t^i\!+\!\bar D_tx_t^0\!+\!\bar F_tu_t^0\!+\!\bar b_t\big)\mathrm{d}W_t^0\!+\!\sigma_t\mathrm{d}W_t^i\!+\!\bar \sigma_t\mathrm{d}\overline W_t^i,\\
      &\mathrm{d}\varphi_t^i\!=\!-\big[A^{\top}_t\varphi_t^i\!+\!D^{\top}_t\eta_t^i\!+\!Q_t(\bar X_t^i\!-\! \alpha_1 x_t^0)\!+\!S_t(\bar u_t^i\!-\!\beta_2 u_t^0)\!+\! q_t\big]\mathrm{d}t \!+\!\eta_t^i\mathrm{d}W_t^0 \!+\!\zeta_t^i\mathrm{d} W_t^i\!+\!\vartheta_t^i\mathrm{d}\overline W_t^i,\\
      &B^{\top}_t\hat\varphi_t^i \!+\!F^{\top}_t\hat\eta_t^i\!+\! S_t(\hat{\bar X}_t^i\!-\!\alpha_2 x_t^0) \!+\!R_t(\bar u_t^i\!-\!\beta_1u_t^0)\!+\!r_t\!=\!0,\\
      &\bar X_0^i\!=\!x,\quad \varphi_T^i\!=\!L_T(\bar X_T^i\!-\!\alpha_3 x_T^0)\!+\!l_T.
    \end{aligned}
  \right.
\end{equation}

{Since the quadruple $(Q, S, R, L_T)$ is indefinite, Hamiltonian system \eqref{Hamiltonian} no longer satisfies the monotonicity condition, \cite[Section 4.1]{CDW2024}.} To the best of our knowledge, it looks like that there are no relevant literature to study the well-posedness of such Hamiltonian system. {However, if an equivalent Hamiltonian system can be identified whose well-posedness is easier to verify, then the well-posedness of \eqref{Hamiltonian} can be derived via a suitable equivalence transformation. To this end, we first construct a family of equivalent cost functionals, whose definition is given as follows.
\begin{definition}
    For a given controlled system, if there exist two cost functionals $J$ and $\bar J$ satisfying: for any admissible control $\tilde u$ and $\check u$, $J(\tilde u)<J(\check u)$ if and only if $\bar J(\tilde u)<\bar J(\check u)$, we say $J$ is equivalent to $\bar J$.
\end{definition}

Inspired by \cite{Y2013}, we adopt the so-called ``relaxed compensator" to construct an equivalent cost functional corresponding to \eqref{lcost}. We begin by introducing the following space
\begin{equation*}
  \Upsilon([0,T];\mathbb S^n)=\bigg\{P: [0,T]\rightarrow \mathbb S^n \ \Big|\ P_t=P_0+\int_0^t\dot{P}_s\mathrm{d}s,\ t\in [0,T] \bigg\}.
\end{equation*}
In linear-quadratic (LQ) control problems, quadratic terms play a dominant role. In fact, the cost functionals with linear terms can often be converted into purely quadratic forms through completing the square and suitable variable transformations.  As we all know, the value function of the SLQ problem can be represented by $\frac{1}{2}\langle \Pi_0x,x\rangle$, where $\Pi$ solves the Riccati equation \eqref{Riccati}, see \cite[Theorem 6.6.1]{Yong1999}. Motivated by the calculation in \cite[pp. 316-317]{Yong1999}, where they consider the difference between $J_i(u^i_{\cdot})$ and $\frac{1}{2}\langle \Pi_0 x,x\rangle$ to deal with the definite case. In contract, for the indefinite case, we consider the difference between $J_i(u^i_{\cdot})$ and  $\frac{1}{2}\langle P_0x,x\rangle$, where $P\in \Upsilon([0,T];\mathbb S^n)$.} Then we introduce the following notations:
\begin{equation}\label{notation}
  \left\{
  \begin{aligned}
    &Q^P=Q+\dot{P}+PA+A^{\top}P+D^{\top} PD,\quad S^P=S+PB+D^{\top}PF,\quad R^P=R+F^{\top}PF,\\
    &q^P=q+Pb+D^{\top}P\bar b+(P\bar A+D^{\top}P\bar D-\alpha_1 Q)x^0+(P\bar B+D^{\top}P\bar F-\beta_2S)u^0,\\
    &r^P=r+F^{\top}P\bar b+(F^{\top}P\bar D-\alpha_2S^{\top})x^0+(F^{\top}P\bar F-\beta_1R)u^0,\quad m^P_T=\langle \alpha_3^2L_Tx_T^0-2\alpha_3l_T,x_T^0\rangle,\\
    &M^P=\langle (\alpha_1^2Q+\bar D^{\top}P\bar D)x^0+2\bar D^{\top}P\bar b-2\alpha_1 q,x^0\rangle+2\alpha_2\beta_2\langle Su^0,x^0\rangle +\bar b^{\top}P\bar b+\sigma^{\top}P\sigma\\
    &  +\bar\sigma^{\top}P
    \bar\sigma+\langle (\beta_1^2R+\bar F^{\top}P\bar F)u^0+2\bar F^{\top}P\bar b-2\beta_1r, u^0\rangle,\quad  L^P_T= L_T-P_T,\quad  l^P_T=l_T-\alpha_3 L_Tx_T^0.
  \end{aligned}
  \right.
\end{equation}
According to above notations, we define
\begin{equation}\label{JiP}
  \begin{aligned}
    J_i^P(u_{\cdot}^i)
    &=\frac{1}{2}\mathbb{E}\bigg[\int_0^T\big\{\langle Q^P_tX_t^i+2q^P_t,X_t^i\rangle+ 2\langle S^P_tu_t^i,X_t^i\rangle+M^P_t\\
    &\qquad+\langle R^P_tu_t^i+2r^P_t,u_t^i\rangle\big\}\mathrm{d}t+\langle L^P_TX_T^i+2l^P_T,X_T^i\rangle+m^P_T\bigg].
  \end{aligned}
\end{equation}

Then we introduce the following auxiliary relaxed problem.

\textbf{Problem (MFP).} For $i$-th agent $\mathcal{A}_i, i=1,2,\cdots,N$, find $\bar u^{i,P} \in \mathscr{U}_{d}^{i}$ such that
\begin{equation*}
  J_i^P(\bar u_{\cdot}^{i,P})=\inf_{u_i\in \mathscr{U}_{d}^{i}}J_i^P(u_{\cdot}^i).
\end{equation*}

{Next, we introduce the definition of relaxed compensator,
\begin{definition}\label{def36}
  If there exists a $P \in \Upsilon([0,T];\mathbb S^n)$ such that   $(Q^P, S^P, R^P, L^P_T)$ satisfies
  \begin{equation*}
      \textbf{Condition (PD)} \qquad \left(\begin{matrix}
  Q &S \\
  S^{\top} &R
\end{matrix}\right)\geq 0,\qquad R \gg 0,\qquad L_T\geq 0,
  \end{equation*}
  then $P$ is called a relaxed compensator for Problem (MFD).
\end{definition}

The following lemma gives the relationship between $J_i(u_{\cdot}^i)$ and $J_i^P(u_{\cdot}^i)$, whose proof can be seen Appendix \ref{Appendix B}.
\begin{lemma}\label{Lem35}
  Suppose (H1)-(H2) hold and $P \in \Upsilon([0,T];\mathbb S^n)$. For any given $x\in \mathbb R^n$ and any admissible strategy $u^i\in \mathscr{U}_{d}^{i}$, we have
  \begin{equation}\label{0313111}
    J_i(u_{\cdot}^i)=J_i^P(u_{\cdot}^i)+\frac{1}{2}\langle P_0x,x\rangle.
  \end{equation}
  Moreover, if there exists a relaxed compensator $P \in \Upsilon([0,T];\mathbb S^n)$, Problem (MFD) is well-posed.
\end{lemma}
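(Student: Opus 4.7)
The plan for the identity \eqref{0313111} is to apply It\^o's formula to $\langle P_t X^i_t, X^i_t\rangle$ along the limiting dynamics \eqref{lxx}. Using $P_t = P_0 + \int_0^t \dot P_s\, ds$ and expanding the drift together with the three diffusion coefficients ($W^0$-, $W^i$- and $\overline W^i$-parts), I would obtain, after taking expectation to kill the martingale contributions,
\begin{equation*}
\mathbb E\big[\langle P_T X^i_T, X^i_T\rangle\big] = \langle P_0 x, x\rangle + \mathbb E\int_0^T \mathcal I_t\, dt,
\end{equation*}
where $\mathcal I_t$ collects $\langle\dot P_t X^i_t, X^i_t\rangle$, the drift cross term $2\langle P_t X^i_t,\, A_t X^i_t + B_t u^i_t + \bar A_t x^0_t + \bar B_t u^0_t + b_t\rangle$, the $W^0$-quadratic variation $(D_t X^i_t + F_t u^i_t + \bar D_t x^0_t + \bar F_t u^0_t + \bar b_t)^\top P_t(\text{same})$, and the constant quadratic-variation pieces $\sigma^\top_t P_t\sigma_t + \bar\sigma^\top_t P_t \bar\sigma_t$.

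Next I would expand $J_i(u^i)$ by opening up each square: $\langle Q(X^i - \alpha_1 x^0), X^i - \alpha_1 x^0\rangle$, $\langle R(u^i - \beta_1 u^0), u^i - \beta_1 u^0\rangle$, the cross term $2\langle S(u^i - \beta_2 u^0), X^i - \alpha_2 x^0\rangle$, and the terminal $\langle L_T(X^i_T - \alpha_3 x^0_T), X^i_T - \alpha_3 x^0_T\rangle$, together with the linear pieces involving $q, r, l_T$. Adding $\frac{1}{2}(\mathbb E[\langle P_T X^i_T, X^i_T\rangle] - \langle P_0 x, x\rangle)$ to $J_i(u^i)$ and regrouping by powers of $X^i$ and $u^i$, the pure-$X^i$-quadratic coefficient becomes $Q + \dot P + PA + A^\top P + D^\top PD = Q^P$, the $u^i$-quadratic becomes $R + F^\top PF = R^P$, the $(X^i,u^i)$-cross becomes $S + PB + D^\top PF = S^P$, the terminal quadratic becomes $L_T - P_T = L^P_T$, and the pieces depending on $x^0, u^0, b, \bar b, \sigma, \bar\sigma$ collapse precisely into $q^P, r^P, l^P_T, M^P, m^P_T$ as defined in \eqref{notation}. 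This yields $J_i(u^i) = J_i^P(u^i) + \tfrac{1}{2}\langle P_0 x, x\rangle$.

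For the well-posedness claim, if a relaxed compensator $P$ exists then $(Q^P, S^P, R^P, L^P_T)$ satisfies Condition (PD), so $J_i^P$ is a standard (nonnegative-definite) LQ functional with $R^P \gg 0$. The linear and constant pieces $q^P, r^P, l^P_T, M^P, m^P_T$ depend only on the exogenous $\mathscr F^\theta$-adapted processes $x^0, u^0$ and on bounded data, hence lie in appropriate $L^2$-spaces. A standard coercivity/completion-of-squares argument in $u^i$, based on $R^P \gg 0$ and $Q^P, L^P_T \geq 0$, then gives $\inf_{u^i \in \mathscr U_d^i} J_i^P(u^i) > -\infty$, and the identity \eqref{0313111} transfers this lower bound to $J_i$, proving that Problem (MFD) is well-posed.

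The main obstacle is purely bookkeeping: carefully tracking every cross term generated by the $W^0$-quadratic variation (which is exactly what produces all the $D^\top P(\cdot)$ contributions in the definition of $Q^P, S^P, R^P, q^P, r^P, M^P$) and confirming that each $x^0$- or $u^0$-dependent piece lands in the right slot of \eqref{notation}. No conceptual obstruction arises, but patient matching of coefficients across several lines of algebra is needed, and the somewhat elaborate form of $M^P$ is precisely explained by the collection of residual constants from this expansion.
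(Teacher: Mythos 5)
Your proposal is correct and follows essentially the same route as the paper's Appendix~\ref{Appendix B} proof: apply It\^o's formula to $\langle P_tX_t^i,X_t^i\rangle$, add the resulting identity to the cost and match coefficients with the notation \eqref{notation} to obtain \eqref{0313111}. For the well-posedness part you argue coercivity of $J_i^P$ directly from Condition (PD) with $R^P\gg 0$, whereas the paper invokes Remark \ref{Rem31} and the cited standard LQ results, but the substance is the same.
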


Then we give a useful result, which is so-called Schur's complement, \cite[Theorem 1]{A1969}.
\begin{lemma}[Schur's complement]\label{Schur}
  Let $Q\in \mathbb S^n$, $R\in \mathbb S^m$ and $S\in \mathbb R^{n\times m}$. Then the following statements are equivalent:

  i) $R\gg 0$ and $Q-SR^{-1}S^{\top}\geq 0$;\qquad
  ii) $R\gg 0$ and $\left(\begin{smallmatrix}
    Q&S\\
    S^{\top}&R
  \end{smallmatrix}\right)\geq 0$.
\end{lemma}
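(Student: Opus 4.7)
The plan is to prove the equivalence via a congruence block-diagonalization, which is the standard route to Schur's complement and keeps signatures invariant. Since $R \gg 0$ is assumed in both (i) and (ii), $R^{-1}$ exists, and I may introduce the invertible block matrix
\[
L := \begin{pmatrix} I_n & -SR^{-1} \\ 0 & I_m \end{pmatrix}, \qquad \det L = 1.
\]
A direct block multiplication gives the identity
\[
L \begin{pmatrix} Q & S \\ S^{\top} & R \end{pmatrix} L^{\top} = \begin{pmatrix} Q - SR^{-1}S^{\top} & 0 \\ 0 & R \end{pmatrix},
\]
which is the only non-trivial computation in the argument.

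From here the equivalence is immediate. Congruence by the invertible matrix $L$ preserves positive semidefiniteness (indeed, it preserves the whole signature), so $\left(\begin{smallmatrix} Q & S \\ S^{\top} & R \end{smallmatrix}\right) \geq 0$ if and only if the block-diagonal matrix on the right-hand side is positive semidefinite. A block-diagonal symmetric matrix is positive semidefinite if and only if each diagonal block is, so this is equivalent to $R \geq 0$ together with $Q - SR^{-1}S^{\top} \geq 0$. Combined with the standing assumption $R \gg 0$, which already implies $R \geq 0$, one sees that (i) and (ii) are equivalent.

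Since the paper only needs this lemma to verify the positive semidefiniteness of a $2\times 2$ block coefficient matrix arising from Condition (PD), no sharper quantitative version is required, and the short congruence proof suffices. The only step worth writing out carefully is the block-matrix identity; the PSD characterization of block-diagonal symmetric matrices is standard. Thus I would present the proof in two short displays followed by one sentence of explanation, or simply cite \cite{A1969} as the authors do and include the identity for the reader's convenience.
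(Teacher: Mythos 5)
Your proof is correct: the block identity $L\left(\begin{smallmatrix} Q & S \\ S^{\top} & R\end{smallmatrix}\right)L^{\top}=\left(\begin{smallmatrix} Q-SR^{-1}S^{\top} & 0 \\ 0 & R\end{smallmatrix}\right)$ with $L=\left(\begin{smallmatrix} I_n & -SR^{-1} \\ 0 & I_m\end{smallmatrix}\right)$ checks out, and since congruence by the invertible $L$ preserves positive semidefiniteness and both statements carry the standing hypothesis $R\gg 0$ (here just positive definiteness of a single constant matrix, which guarantees $R^{-1}$ exists), the equivalence follows. The paper itself gives no proof of this lemma, citing only \cite[Theorem 1]{A1969}, so there is no in-paper argument to compare against; your congruence block-diagonalization is the standard proof and fully fills that gap.
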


\begin{remark}\label{Rem31}
  By Schur's complement, it is obvious that if quadruple $(Q,S,R,L_T)$ satisfies Condition (PD), Problem (MFD) is well-posed.
   In fact, let ${\bf q}=q-\alpha_1Qx^0-\beta_2S u^0$, ${\bf r}=r-\beta_1Ru^0-\alpha_2S^{\top}x^0$, ${\bf l}_T=l_T-\alpha_3 L_Tx_T^0$, and $C_0=\mathbb E[\int_0^T \{\langle \alpha_1^2Q_tx_t^0-2\alpha_1q_t,x_t^0\rangle +\langle \beta_1^2R_tu_t^0-2\beta_1r_t,u_t^0\rangle +2\langle \alpha_2\beta_2 S_tu_t^0, x_t^0\rangle \}\mathrm{d}t\!+\!\langle \alpha_3^2L_Tx_T^0-2\alpha_3l_T,x_T^0\rangle] $,   then the cost {(16)} can be rewritten as
  \begin{equation*}
      \begin{aligned}
          J_i(u^i_{\cdot})&\!=\!\frac{1}{2}\mathbb E\bigg[\int_0^T\big\{\langle Q_tX_t^i\!+\!2{\bf q}_t, X_t^i\rangle \!+\!\langle R_tu_t^i\!+\!2{\bf r}_t, u_t^i\rangle \!+\!2\langle S_tu_t^i, X_t^i\rangle  \big\}\mathrm{d}t\!+\!\langle L_TX_T^i\!+\!2{\bf l}_T, X_T^i\rangle  \bigg]\!+\!C_0.
      \end{aligned}
  \end{equation*}
  By \cite[Theorem 6.4.2]{Yong1999} (see also \cite[Proposition 2.5.1]{SY2020} for the inhomogeneous case) and Schur's complement, if quadruple $(Q,S,R,L_T)$ satisfies Condition (PD), Problem (MFD) is well-posed.
\end{remark}

Now let us give a necessary and sufficient condition checking the relaxed compensator for Problem (MFD).

 \textbf{Condition (RC)} $P$ satisfies the following system of inequalities
 \begin{equation}\label{Riccaticondition}
   \left\{
   \begin{aligned}
     &\dot{P}_t+P_tA_t+A_t^{\top}P_t+D_t^{\top}P_tD_t+Q_t\\
     &\qquad-(S_t+P_tB_t+D_t^{\top}P_tF_t)(R_t+F_t^{\top}P_tF_t)^{-1} (S_t+P_tB_t+D_t^{\top}P_tF_t)^{\top}\geq 0,\\
     &P_T\leq L_T,\quad R_t+F_t^{\top}P_tF_t\gg 0,\quad t\in [0,T].
   \end{aligned}
   \right.
 \end{equation}
 \begin{proposition}\label{prop310}
   A function $P\in \Upsilon([0,T];\mathbb S^n)$ is a relaxed compensator for Problem (MFD) if and only if $P$ satisfies Condition (RC).
 \end{proposition}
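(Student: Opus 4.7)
The plan is to verify the stated equivalence by directly unfolding Definition \ref{def36} and applying Schur's complement (Lemma \ref{Schur}). Since both directions must be shown, I will aim for a chain of equivalences rather than two separate implications.

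First, I would recall from Definition \ref{def36} that $P\in \Upsilon([0,T];\mathbb S^n)$ is a relaxed compensator exactly when the quadruple $(Q^P, S^P, R^P, L_T^P)$ defined in \eqref{notation} satisfies Condition (PD), namely
\begin{equation*}
\begin{pmatrix} Q^P_t & S^P_t \\ (S^P_t)^{\top} & R^P_t \end{pmatrix} \geq 0,\qquad R^P_t \gg 0,\qquad L^P_T \geq 0.
\end{equation*}
Substituting the explicit expressions from \eqref{notation}, the third condition $L^P_T = L_T - P_T \geq 0$ immediately rewrites as $P_T \leq L_T$, matching the second line of \eqref{Riccaticondition}. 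Similarly, $R^P = R + F^{\top}PF \gg 0$ matches the uniform positivity requirement of \eqref{Riccaticondition} verbatim.

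The main step is handling the block matrix inequality. Given that $R^P \gg 0$ is already in force, Schur's complement (Lemma \ref{Schur}) gives the equivalence
\begin{equation*}
\begin{pmatrix} Q^P_t & S^P_t \\ (S^P_t)^{\top} & R^P_t \end{pmatrix} \geq 0 \quad \Longleftrightarrow \quad Q^P_t - S^P_t (R^P_t)^{-1} (S^P_t)^{\top} \geq 0.
\end{equation*}
Plugging in $Q^P = Q + \dot P + PA + A^{\top}P + D^{\top}PD$, $S^P = S + PB + D^{\top}PF$, and $R^P = R + F^{\top}PF$ produces exactly the first inequality in \eqref{Riccaticondition}. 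Since every step above is an ``if and only if,'' both implications of the proposition follow simultaneously.

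I do not expect any real obstacle here: the proof is essentially bookkeeping plus one invocation of Schur's complement. The only point deserving a brief sentence is that $R^P \gg 0$ is required both as a hypothesis for Schur's complement and as part of Condition (RC), so this piece of the equivalence must be carried through both conditions consistently. The rest is purely algebraic substitution from \eqref{notation}.
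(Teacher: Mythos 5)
Your proof is correct and follows essentially the same route as the paper: unfold Definition \ref{def36}, apply Schur's complement (Lemma \ref{Schur}) to reduce the block positivity to $Q^P - S^P(R^P)^{-1}(S^P)^{\top}\geq 0$, and substitute the expressions from \eqref{notation} to recover Condition (RC). No gaps.
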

 \begin{proof}
   Noticing Definition \ref{def36}, we know that $P$ is a relaxed compensator if and only if the quadruple $(Q^P,S^P,R^P,L^P_T)$ satisfies Condition (PD), i.e.,
   \begin{equation}\label{8128}
     \begin{aligned}
       R^P\gg 0,\qquad \left(\begin{matrix}
         Q^{P}_t&S^P_t\\
         (S^P_t)^{\top}& R^P_t
       \end{matrix}\right)\geq 0,\qquad L^P_T\geq 0.
     \end{aligned}
   \end{equation}
   Lemma \ref{Schur} yields that \eqref{8128} is equivalent to
   \begin{equation}\label{8129}
     \begin{aligned}
       R^P\gg 0, \qquad Q^P_t-S^P_t(R^P_t)^{-1}(S^P_t)^{\top}\geq 0,\qquad L_T^P\geq 0.
     \end{aligned}
   \end{equation}
   Recalling \eqref{notation}, we know that \eqref{8129} is equivalent to \eqref{Riccaticondition}.
 \end{proof}}

 Next, we will show the unique solvability of Hamiltonian system \eqref{Hamiltonian}. To do this, we introduce the corresponding Hamiltonian system of Problem (MFP),
 \begin{equation}\label{pHamiltonian}
  \left\{
    \begin{aligned}
      &\mathrm{d}\bar X^{i,P}_t=\big(A_t\bar X^{i,P}_t+B_t\bar u^{i,P}_t+\bar A_tx_t^0+\bar B_tu_t^0+b_t\big) \mathrm{d}t +\sigma_t\mathrm{d}W_t^i\\
      &\qquad \quad+\big(D_t\bar X^{i,P}_t+F_t\bar u^{i,P}_t+\bar D_tx_t^0+\bar F_t u_t^0+\bar b_t\big)\mathrm{d}W_t^0+\bar \sigma_t\mathrm{d}\overline W_t^i,\quad \bar X^{i,P}_0=x,\\
      &\mathrm{d}\varphi^{i,P}_t=-\big(A^{\top}_t\varphi^{i,P}_t+D^{\top}_t\eta^{i,P}_t+Q^P_t\bar X^{i,P}_t+S^P_t\bar u^{i,P}_t+ q^P_t\big)\mathrm{d}t\\
      &\qquad\quad +\eta^{i,P}_t\mathrm{d}W_t^0 +\zeta^{i,P}_t\mathrm{d} W_t^i+\vartheta^{i,P}_t\mathrm{d}\overline W_t^i, \quad \varphi^{i,P}_T=L^P_T\bar X^{i,P}_T+l^P_T,\\
      &R^P_t\bar u^{i,P}_t+B^{\top}_t\hat\varphi^{i,P}_t +F^{\top}_t\hat\eta^{i,P}_t+ (S^P_t)^{\top}\hat{\bar X}^{i,P}_t +r^P_t=0,\\
    \end{aligned}
  \right.
\end{equation}

\begin{proposition}\label{0309311}
  If there exists a relaxed compensator $P\in \Upsilon([0,T];\mathbb S^n)$, then Hamiltonian system \eqref{Hamiltonian} admits a unique solution $(\bar X^i,\bar u^i, \varphi^i,\eta^i,\zeta^i,\vartheta^i) \in S^2_{\mathscr{F}^i}([0,T];\mathbb R^n)\times \mathscr{U}_{d}^{i}\times \bar{\mathcal M}_{\mathscr{F}^i}$. Moreover, $(\bar X^i,\bar u^i)$ is the unique optimal pair of Problem (MFD).
\end{proposition}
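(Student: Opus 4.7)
The plan is to exploit the equivalence of cost functionals in Lemma \ref{Lem35} to reduce the indefinite Problem (MFD) to the positive-definite Problem (MFP), solve its Hamiltonian system \eqref{pHamiltonian} by classical LQ theory, and then translate back via a linear shift involving $P$ to obtain a unique solution of the original Hamiltonian system \eqref{Hamiltonian}.

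First, since $P$ is a relaxed compensator, Definition \ref{def36} ensures that the quadruple $(Q^P, S^P, R^P, L_T^P)$ satisfies Condition (PD). By Remark \ref{Rem31}, Problem (MFP) is well-posed, and because $R^P \gg 0$ the cost functional $J_i^P$ is uniformly convex in $u^i$. Applying Proposition \ref{Thm34} to Problem (MFP) (with the standard weights $(Q^P, S^P, R^P, L_T^P, q^P, r^P, l_T^P)$ in place of $(Q,S,R,L_T,q,r,l_T)$), one obtains that any optimal strategy $\bar u^{i,P} \in \mathscr U_d^i$ is characterized by the Hamiltonian system \eqref{pHamiltonian}, and by strict convexity such an optimum exists and is unique. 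To derive the unique solvability of \eqref{pHamiltonian} itself, I would solve the stationarity condition for $\bar u^{i,P}$ in terms of $\hat\varphi^{i,P}, \hat\eta^{i,P}, \hat{\bar X}^{i,P}$ via the invertibility of $R^P$, substitute it back and interpret the resulting forward–backward system as an FBSDE for which the monotonicity condition now holds after filtering (just as in \cite{CDW2024}).

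Next, I would define the transformation from \eqref{pHamiltonian} to \eqref{Hamiltonian} by the natural shift suggested by the completion of squares in Lemma \ref{Lem35}. Set
\begin{equation*}
\bar X^i = \bar X^{i,P}, \qquad \bar u^i = \bar u^{i,P}, \qquad \varphi^i = \varphi^{i,P} + P\bar X^i,
\end{equation*}
and define $\eta^i, \zeta^i, \vartheta^i$ by matching the diffusion coefficients obtained from applying It\^o's formula to $P\bar X^i$, namely
\begin{equation*}
\eta^i = \eta^{i,P} + P\bigl(D\bar X^i + F\bar u^i + \bar D x^0 + \bar F u^0 + \bar b\bigr), \qquad \zeta^i = \zeta^{i,P} + P\sigma, \qquad \vartheta^i = \vartheta^{i,P} + P\bar\sigma.
\end{equation*}
A direct computation using the explicit expressions of $Q^P, S^P, R^P, q^P, r^P, L_T^P, l_T^P$ in \eqref{notation} then shows that the driver and terminal value of the BSDE for $\varphi^{i,P}$ rearrange into exactly the driver and terminal value for $\varphi^i$ in \eqref{Hamiltonian}, and the stationarity condition for $\bar u^{i,P}$ becomes \eqref{optimalcontrol}. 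This yields existence of a solution to \eqref{Hamiltonian} with the required regularity, and the same transformation applied in reverse (subtracting $P\bar X^i$ and the corresponding diffusion terms) sends any solution of \eqref{Hamiltonian} to a solution of \eqref{pHamiltonian}, giving uniqueness.

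Finally, combining the identity $J_i(u^i) = J_i^P(u^i) + \tfrac12 \langle P_0 x,x\rangle$ of Lemma \ref{Lem35} with the fact that $\bar u^{i,P}$ is the unique minimizer of $J_i^P$ over $\mathscr U_d^i$, we conclude that $\bar u^i = \bar u^{i,P}$ is the unique optimal control of Problem (MFD), and $(\bar X^i, \bar u^i)$ is the unique optimal pair. The main obstacle is the algebraic bookkeeping in the shift transformation: one has to verify term-by-term, using \eqref{notation}, that the driver of $\varphi^{i,P}$ plus the It\^o correction from $P\bar X^i$ recovers exactly the indefinite driver in \eqref{Hamiltonian}, in particular that the terms involving $x^0, u^0, b, \bar b$ cancel correctly; the well-posedness of \eqref{pHamiltonian} itself, once Condition (PD) is in force, is comparatively routine.
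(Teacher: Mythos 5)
Your proposal is correct and follows essentially the same route as the paper: it solves \eqref{pHamiltonian} under Condition (PD) via the monotonicity condition as in \cite{CDW2024}, uses exactly the shift transformation \eqref{transform} (with $\varphi^i=\varphi^{i,P}+P\bar X^{i,P}$ and the matching It\^o corrections for $\eta^i,\zeta^i,\vartheta^i$) together with its reversibility to transfer unique solvability to \eqref{Hamiltonian}, and invokes Lemma \ref{Lem35} to conclude that $(\bar X^i,\bar u^i)=(\bar X^{i,P},\bar u^{i,P})$ is the unique optimal pair of Problem (MFD). No substantive differences from the paper's argument.
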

\begin{proof}
  We know that if $(\bar X^{i,P}, \bar u^{i,P}, \varphi^{i,P}, \eta^{i,P}, \zeta^{i,P},\vartheta^{i,P})$ solves \eqref{pHamiltonian}, then
  \begin{equation}\label{transform}
    \left\{
    \begin{aligned}
      &\bar X^i=\bar X^{i,P},\quad \bar u^i=\bar u^{i,P},\quad \varphi^i = \varphi^{i,P}+P\bar X^{i,P},\  &&\zeta^i=\zeta^{i,P}+P \sigma,\\
      &\eta^i=\eta^{i,P}+P(D\bar X^{i,P}+F\bar u^{i,P}+ \bar Dx^0+\bar Fu^0+\bar b),\  &&\vartheta^i=\vartheta^{i,P}+P\bar \sigma,
    \end{aligned}
    \right.
  \end{equation}
  is a solution to \eqref{Hamiltonian}. Thus the well-posedness of \eqref{Hamiltonian} can be obtained by the well-posedness of \eqref{pHamiltonian}. Due to the reversibility of the transformation \eqref{transform}, the well-posedness of \eqref{Hamiltonian} also implies that of \eqref{pHamiltonian}. Therefore, the solvability between \eqref{Hamiltonian} and \eqref{pHamiltonian} are equivalent.

  It is easy to check that the coefficients of \eqref{pHamiltonian} satisfy the monotonicity condition (see \cite{CDW2024}), then it follows that \eqref{pHamiltonian} admits a unique solution $(\bar X^{i,P},\bar u^{i,P}, \varphi^{i,P},\eta^{i,P}, \zeta^{i,P}, \vartheta^{i,P})\in S^2_{\mathscr{F}^i}(0,T;\mathbb R^n)\times \mathscr{U}_{d}^{i}\times \bar{\mathcal M}_{\mathscr{F}^i}$. Moreover, we know that $(\bar X^{i,P},\bar u^{i,P})$ is the unique optimal pair of Problem (MFP) by a similar argument of Proposition \ref{Thm34}. In virtue of the transformation \eqref{transform}, we obtain $(\bar X^i,\bar u^i)=(\bar X^{i,P},\bar u^{i,P})$. Noticing the equivalence between the cost functional $J_i(u_{\cdot}^i)$ and $J_i^P(u_{\cdot}^i)$, (see Lemma \ref{Lem35}), we obtain that the unique optimal pair $(\bar X^i, \bar u^i)=(\bar X^{i,P},\bar u^{i,P})$ of Problem (MFP) is also the unique optimal pair of Problem (MFD).
\end{proof}

\subsection{Consistency Condition}

In Proposition \ref{Thm34}, we derive the agent $\mathcal{A}_i$'s optimal decentralized strategy $\bar u^i$ through the Hamiltonian system \eqref{Hamiltonian}, which is still parameterized by the undetermined limit process $(x^0,u^0)$. Now, we try to determine them by virtue of the consistency condition (CC).

Noting that the control weighting matrix $R$ is indefinite, we cannot obtain the explicit form of the optimal decentralized strategy $\bar u^i$ by the stationary condition \eqref{optimalcontrol}. Instead, with transformation \eqref{transform}, we have $(\bar X^i, \bar u^i)\equiv(\bar X^{i,P},\bar u^{i,P})$. Thus in the following we use $(\bar X^{i,P},\bar u^{i,P})$ to obtain $(x^0,u^0)$. Note that, for any $i\ne j$, $\bar X^{i,P}$ and $\bar X^{j,P}$ are identically distributed and conditionally independent {(under $\mathbb E[\cdot|\mathscr{F}^{\theta}]$, noticing that $\mathscr F^{\theta}=\mathscr F^{W^0}$)}.  Thus by the conditional strong law of large number, we have (the convergence is in the sense of almost surely; see \cite{Majerek2005,Hu20182}),
\begin{equation}\label{px0}
  \begin{aligned}
  &x^0=\lim_{N\rightarrow \infty}\frac{1}{N}\sum_{i=1}^N\bar X^i=\lim_{N\rightarrow \infty}\frac{1}{N}\sum_{i=1}^N\bar X^{i,P}=\mathbb E[\bar X^{i,P}|\mathscr{F}^{\theta}],\\
  &u^0=\lim_{N\rightarrow \infty}\frac{1}{N}\sum_{i=1}^N \bar u^i=\lim_{N\rightarrow \infty}\frac{1}{N}\sum_{i=1}^N \bar u^{i,P}=\mathbb{E}[\bar u^{i,P}|\mathscr{F}^{\theta}].
  \end{aligned}
\end{equation}
Moreover, recalling notations \eqref{notation} and combining \eqref{pHamiltonian} with \eqref{px0}, we obtain
\begin{equation}\label{92133}
  \begin{aligned}
    \bar u^{i,P}_t&=-(R^P_t)^{-1}\big[B_t^{\top}\hat{\varphi}_t^{i,P}+F_t^{\top}\hat{\eta}_t^{i,P}
    +(S^P_t)^{\top}\hat{\bar X}_t^{i,P}+r_t^P\big],\\
    u_t^0&=-\bar{\mathcal R}_t^{-1}\big(B_t^{\top}\check{\varphi}_t^{i,P}+F_t^{\top}\check{\eta}_t^{i,P}
    +\bar S_t \check{\bar X}_t^{i,P}+F^{\top}_tP_t\bar b_t+r_t\big),
  \end{aligned}
\end{equation}
where $\bar{\mathcal R}=R^P+F^{\top}P\bar F-\beta_1R$, $\bar S=(S^P)^{\top}+F^{\top}P\bar D-\alpha_2S^{\top}$ and {$\check{\bar X}^{i,P}=x^0$}. Here, the symbols $\check{\bar X}^{i,P}, \check{\varphi}^{i,P}, \check{\eta}^{i,P}$ are defined by \eqref{filteringsymbol}. Furthermore, substituting the optimal strategy $\bar u_{\cdot}^{i,P}$ into \eqref{pHamiltonian} and noticing that all agents are statistically identical, i.e., we can suppress subscript ``i", we could obtain the following CC system arises for generic agent,
 \begin{equation}\label{pCC}
  \left\{
    \begin{aligned}
      &\mathrm{d}\bar X^P_t=\big\{A_t\bar X_t^P+\bar A_t\check{\bar X}_t^P-B_t(R_t^P)^{-1}[B_t^{\top} \hat\varphi_t^P +F_t^{\top}\hat\eta_t^P+ (S_t^P)^{\top} \hat{\bar X}_t^P+(F^{\top}_tP_t\bar D_t-\alpha_2S^{\top}_t)\check{\bar X}_t^P\\
      &\qquad +F_t^{\top}P_t\bar b_t+r_t] -\mathcal A_t(B_t^{\top}\check{\varphi}_t^P+F_t^{\top}\check{\eta}_t^P+\bar S_t\check{\bar X}_t^P+F^{\top}_tP_t\bar b_t+r_t)+b_t\big\} \mathrm{d}t+\big\{D_t\bar X_t^P+\bar D_t\check{\bar X}_t^P \\
      &\qquad -F_t(R_t^P)^{-1}[B_t^{\top} \hat\varphi_t^P +F_t^{\top}\hat\eta_t^P+ (S_t^P)^{\top} \hat{\bar X}_t^P+(F^{\top}_tP_t\bar D_t-\alpha_2S^{\top}_t)\check{\bar X}_t^P+F_t^{\top}P_t\bar b_t+r_t]\\
      &\qquad -\mathcal B_t(B_t^{\top}\check{\varphi}_t^P+F_t^{\top}\check{\eta}_t^P+\bar S_t\check{\bar X}_t^P+F^{\top}_tP_t\bar b_t+r_t)+\bar b_t\big\}\mathrm{d}W_t^0+\sigma_t\mathrm{d}W_t+\bar \sigma_t\mathrm{d}\overline W_t,\\
      &\mathrm{d}\varphi^P_t=-\big\{A_t^{\top}\varphi_t^P+D_t^{\top}\eta_t^P+Q_t^P\bar X_t^P-(S_t^P)^{\top}(R_t^P)^{-1}[B_t^{\top}\hat\varphi_t^P+F^{\top}_t\hat\eta_t^P+(S_t^P)^{\top} \hat{\bar X}_t^P+r_t\\
      &\qquad +F_t^{\top}P_t\bar b_t+(F_t^{\top}P_t\bar D_t-\alpha_2S_t^{\top})\check{\bar X}_t^P]-\mathcal C_t(B^{\top}_t\check{\varphi}_t^P+F^{\top}_t \check{\eta}_t^P+\bar S_t\check{\bar X}_t^P+r_t+F_t^{\top}P_t\bar b_t)+ q_t\\
      &\qquad +P_tb_t+D_t^{\top}P_t\bar b_t+(P_t\bar A_t+D_t^{\top}P_t\bar D_t-\alpha_1 Q_t)\check{\bar X}_t^P\big\}\mathrm{d}t +\eta^P_t\mathrm{d}W_t^0+\zeta_t^P\mathrm{d} W_t+\vartheta^P_t\mathrm{d}\overline W_t,\\
      &\bar X^P_0=x,\quad \varphi^P_T=L^P_T\bar X^P_T+l_T-\alpha_3 L_T\check{\bar X}^P_T,
    \end{aligned}
  \right.
\end{equation}
where $\mathcal A=[\bar B-B(R^P)^{-1}(F^{\top} P\bar F -\beta_1R)]\bar{\mathcal R}^{-1}$, $\mathcal B=[\bar F-F(R^P)^{-1}(F^{\top} P\bar F-\beta_1R) ]\bar{\mathcal R}^{-1}$, $\mathcal C=[P\bar B+D^{\top}P\bar F-\beta_2S-(S^P)^{\top}(R^P)^{-1}(F^{\top}P\bar F-\beta_1R)]\bar{\mathcal R}^{-1}$.

CC system \eqref{pCC} is a fully coupled FBSDE with two types of conditional expectations. For a given relaxed compensator $P\in \Upsilon([0,T];\mathbb S^n)$, the unique solvability of CC system \eqref{pCC} can be investigate by the discounting method, see \cite[Section 4.2]{CDW2024}. However, since the coefficients depend on the choice of $P$, it causes lots of efforts in formulating the assumptions required for applying the discounting method. Although the CC system \eqref{pCC} may still be well-posed over a small time interval, this is not practically useful. In Section \ref{sec:feedback}, we compute the limiting process $(x^0, u^0)$ via \eqref{x0412}.

\subsection{Feedback representation}\label{sec:feedback}

{In this subsection, we give the feedback representation of the optimal decentralized strategies of Problem (MFD). We first introduce two Riccati equations,
\begin{equation}\label{Riccati}
  \left\{
  \begin{aligned}
      &\dot{\Pi}_t+\Pi_t A_t+A_t^{\top}\Pi_t +D_t^{\top}\Pi_t D_t+Q_t-\widetilde\Pi_t\mathcal{R}_{t}^{-1}\widetilde \Pi_t^{\top}= 0,\\
      &\Pi_T=L_T,\quad \mathcal R\gg 0,\quad t\in [0,T],
  \end{aligned}
  \right.
\end{equation}
\begin{equation}\label{Riccati2}
\left\{
\begin{aligned}
  &\dot{\Sigma}_t+\Sigma_t(A_t+\bar A_t)+A_t^{\top}\Sigma_t+D^{\top}_t\Sigma_t(D_t+\bar D_t) -\widetilde\Sigma_t \widetilde{\mathcal R}^{-1}_t\bar\Sigma_t^{\top}
  +(1-\alpha_1)Q_t=0,\\
  &\Sigma_T=(1-\alpha_3) L_T,\quad \widetilde{\mathcal R}+\widetilde{\mathcal R}^{\top}\gg 0, \quad t\in [0,T],
\end{aligned}
\right.
\end{equation}
and ODE,
\begin{equation}\label{BSDE0}
  \begin{aligned}
      &\dot{\rho}_t+A^{\top}_t\rho_t-\widetilde\Sigma_t \widetilde{\mathcal R}_t^{-1}\tilde \rho_t+\Sigma_tb_t+ D^{\top}_t\Sigma_t\bar b_t+q_t=0,\quad
      \rho_T=l_T,
  \end{aligned}
\end{equation}
where $\mathcal{R}=R+F^{\top}\Pi F$, $\widetilde{\mathcal R}=(1-\beta_1)R+F^{\top}\Sigma (F+\bar F)$, $\widetilde\Sigma=\Sigma (B+\bar B)+D^{\top}\Sigma (F+\bar F)+(1-\beta_2)S$, $\bar\Sigma =\Sigma B+(D+\bar D)^{\top}\Sigma F+(1-\alpha_2) S$, $\widetilde\Pi= \Pi B+D^{\top}\Pi F+S$, $\tilde\rho=B^{\top}\rho+F^{\top}\Sigma\bar b+r$. Then we have the following result, whose proof can be seen Appendix \ref{Appendix D}.

\begin{theorem}\label{thm8314}
  Under (H1)-(H2), if there exists a relaxed compensator $P\in \Upsilon([0,T]; \mathbb S^n)$, Riccati equation \eqref{Riccati} admits a unique solution $\Pi\in C([0,T];\mathbb S^n)$. Moreover, suppose that Riccati equation \eqref{Riccati2} admits a unique solution $\Sigma\in C([0,T];\mathbb R^{n\times n})$. then  for any given $x\in \mathbb R^n$, the optimal decentralized strategy $\bar u^i$ of Problem (MFD) has the following feedback representation
  \begin{equation}\label{feed230129}
    \begin{aligned}
      \bar u_t^i=-\mathcal{R}_t^{-1}\widetilde\Pi_t^{\top}(\hat{\bar X}_t^i-x_t^0)-\widetilde{\mathcal R}_t^{-1}(\bar\Sigma_t^{\top} x_t^0+\tilde\rho_t).
    \end{aligned}
  \end{equation}
Furthermore, the state-average limiting process $x^0$ is the unique solution to the following SDE,
\begin{equation}\label{x0412}
    \begin{aligned}
      \mathrm{d}x_t^0&=\big\{[A_t+\bar A_t-(B_t+\bar B_t)\widetilde{\mathcal R}^{-1}_t\bar \Sigma_t^{\top}]x_t^0-(B_t+\bar B_t)\widetilde{\mathcal R}_t^{-1}\tilde\rho_t+b_t\big\}\mathrm{d}t\\
      &\quad+\big\{[D_t+\bar D_t-(F_t+\bar F_t)\widetilde{\mathcal R}^{-1}_t\bar \Sigma_t^{\top}]x_t^0-(F_t+\bar F_t)\widetilde{\mathcal R}_t^{-1}\tilde\rho_t+\bar b_t\big\}\mathrm{d}W_t^0,\quad
      x_0^0=x,
    \end{aligned}
\end{equation}
and the optimal state ${\bar X}^i$ solves the following SDE,
\begin{equation}\label{feedback x}
  \begin{aligned}
    &\mathrm{d}{\bar X}_t^i=\big\{A_t\bar X_t^i-B_t\mathcal R_t^{-1}\widetilde\Pi_t^{\top}\hat{\bar X}_t^i+ \big[\bar A_t+B_t\mathcal R_t^{-1}\widetilde\Pi_t^{\top}-(B_t+\bar B_t)\widetilde{\mathcal R}_t^{-1}\bar\Sigma_t^{\top}\big]x_t^0+b_t\\
    &\qquad   -(B_t+\bar B_t)\widetilde{\mathcal R}_t^{-1}\tilde\rho_t\big\}\mathrm{d}t+\big\{D_t\bar X_t^i-F_t\mathcal R_t^{-1}\widetilde\Pi_t^{\top}\hat{\bar X}_t^i+ \big[\bar F_t+F_t\mathcal R_t^{-1}\widetilde\Pi_t^{\top}\\
    &\qquad- (F_t+\bar F_t)\widetilde{\mathcal R}_t^{-1}\bar\Sigma_t^{\top}\big]x_t^0-(F_t+\bar F_t)\widetilde{\mathcal R}_t^{-1}\tilde\rho_t +\bar b_t\big\}\mathrm{d}W_t^0 +\sigma_t\mathrm{d}W_t^i+\bar\sigma_t\mathrm{d}\overline W_t^i,\quad
    {\bar X}_0^i=x.
  \end{aligned}
\end{equation}
\end{theorem}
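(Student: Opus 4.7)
The plan is to proceed in three stages, first establishing solvability of \eqref{Riccati}, then deriving the feedback form by an ansatz on the adjoint, and finally extracting the SDE for $x^0$ and $\bar X^i$ from this feedback.

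Stage 1 (Solvability of \eqref{Riccati}). I would leverage the equivalent Problem (MFP). Since $P$ is a relaxed compensator, Definition \ref{def36} gives that $(Q^P,S^P,R^P,L^P_T)$ satisfies Condition (PD), so the standard (non-indefinite) Riccati equation attached to (MFP),
\begin{equation*}
\dot\Pi^P+\Pi^P A+A^{\top}\Pi^P+D^{\top}\Pi^P D+Q^P-\widetilde\Pi^P(\mathcal R^P)^{-1}(\widetilde\Pi^P)^{\top}=0,\quad \Pi^P_T=L^P_T,
\end{equation*}
admits a unique solution $\Pi^P\in C([0,T];\mathbb S^n)$ with $\mathcal R^P\gg 0$ by classical positive-definite LQ theory (cf.\ \cite[Chap.~6]{Yong1999}). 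Setting $\Pi:=\Pi^P+P$ and using the algebraic identities in \eqref{notation}, one checks directly that $\widetilde\Pi=\widetilde\Pi^P$ and $\mathcal R=\mathcal R^P\gg 0$, so $\Pi$ solves \eqref{Riccati}. Uniqueness transfers from that of $\Pi^P$ via the same reversible transformation.

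Stage 2 (Feedback representation). Because $(\bar X^i,\bar u^i)=(\bar X^{i,P},\bar u^{i,P})$ by Proposition \ref{0309311}, I work with the Hamiltonian system \eqref{Hamiltonian} and try the two-layer ansatz
\begin{equation*}
\check\varphi^i_t=\Sigma_t\,\check{\bar X}^i_t+\rho_t,\qquad \hat\varphi^i_t-\check\varphi^i_t=\Pi_t(\hat{\bar X}^i_t-\check{\bar X}^i_t),
\end{equation*}
with the associated $(\hat\eta^i,\check\eta^i)$ obtained by applying It\^o's formula to these expressions. Projecting the forward state equation onto $\mathcal F^{\bar Y^i}$ (Kalman--Bucy type filtering, noting that $\mathcal F^\theta=\mathcal F^{W^0}$ under (H1) so the $W^0$-diffusion survives filtering while the $W^i,\overline W^i$-diffusions contribute only through the innovation) and then onto $\mathcal F^\theta$, and matching drift/diffusion against the backward equation in \eqref{Hamiltonian}, the individual (deviation) part produces exactly \eqref{Riccati}, the aggregated part produces \eqref{Riccati2} with $\widetilde\Sigma,\bar\Sigma,\widetilde{\mathcal R}$ as defined, and the inhomogeneous part produces \eqref{BSDE0}. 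Plugging the ansatz into the stationary condition $B^{\top}\hat\varphi^i+F^{\top}\hat\eta^i+S(\hat{\bar X}^i-\alpha_2 x^0)+R(\bar u^i-\beta_1 u^0)+r=0$ and regrouping by the deviation $\hat{\bar X}^i-x^0$ and by $x^0$ then yields precisely \eqref{feed230129}.

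Stage 3 (SDEs for $x^0$ and $\bar X^i$). The equation \eqref{feedback x} is obtained by substituting \eqref{feed230129} directly into \eqref{lxx}. For \eqref{x0412}, I take $\mathbb E[\cdot\,|\,\mathcal F^\theta]$ of \eqref{feedback x}, use $\mathbb E[\hat{\bar X}^i|\mathcal F^\theta]=\check{\bar X}^i=x^0$ so that the deviation contribution involving $\mathcal R^{-1}\widetilde\Pi^{\top}$ vanishes, and note $\mathbb E[\sigma\,dW^i+\bar\sigma\,d\overline W^i|\mathcal F^\theta]=0$. Uniqueness of $x^0$ is immediate as \eqref{x0412} is a linear SDE with bounded coefficients.

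The main obstacle is the coefficient-matching in Stage 2. One has to handle two nested filtrations $\mathcal F^\theta\subset\mathcal F^{\bar Y^i}$ simultaneously: the filtering dynamics of $\hat{\bar X}^i$ are driven not by the original $W^i,\overline W^i$ but by the innovation, and the stationary condition couples $\hat\varphi^i,\hat\eta^i$ (filtered under $\mathcal F^{\bar Y^i}$) with $\check\varphi^i,\check\eta^i$ (filtered under $\mathcal F^\theta$). Showing that the ansatz is self-consistent, i.e.\ that every cross term generated by the two-layer structure cancels and one is left with precisely the three equations \eqref{Riccati}, \eqref{Riccati2}, \eqref{BSDE0} rather than additional coupling constraints, is the delicate part of the argument.
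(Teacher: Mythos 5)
Your Stage 1 is exactly the paper's first step (transform to the positive-definite Riccati equation \eqref{pRiccati} of Problem (MFP) via $\Pi=\Pi^P+P$, using $\mathcal R=\mathcal R^P$, $\widetilde\Pi=\widetilde\Pi^P$), and your Stage 3 agrees with the paper as well, except that you omit the final admissibility check that the candidate \eqref{feed230129} actually lies in $\mathscr U_d^i$ (square integrability of $x^0$, $\bar X^i$, $\bar u^i$ and $\mathscr F^{\bar Y^i}$-adaptedness), which the paper needs in order to conclude that the feedback process is \emph{the} optimal decentralized strategy.

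The genuine gap is in Stage 2, and it is precisely the point you flag as the ``delicate part'' but do not resolve. Your two-layer ansatz is posed at the level of the \emph{filtered} processes $\hat\varphi^i,\check\varphi^i$, and you propose to obtain $(\hat\eta^i,\check\eta^i)$ by applying It\^o's formula to these filtered expressions together with Kalman--Bucy dynamics for $\hat{\bar X}^i$. This does not work as stated: $\hat\eta^i=\mathbb E[\eta^i|\mathscr F^{\bar Y^i}]$ is the conditional expectation of the martingale integrand of the \emph{unfiltered} BSDE in \eqref{Hamiltonian}, whereas It\^o applied to your filtered ansatz only produces the diffusion coefficients of $\hat\varphi^i$ with respect to $W^0$ and the innovation process of $\bar Y^i$; these differ from $\hat\eta^i$ in general by conditional-covariance correction terms, so the stationary condition (which involves $\hat\eta^i$, not the innovation coefficient) cannot be closed from your ansatz without an additional argument, and the claimed cancellation leading to exactly \eqref{Riccati}, \eqref{Riccati2}, \eqref{BSDE0} is not established. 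The paper avoids this entirely by positing the ansatz at the unfiltered level, $\varphi^i_t=\Pi_t(\bar X^i_t-x^0_t)+\Sigma_t x^0_t+\rho_t$ (consistent with the terminal condition since $\Pi_T=L_T$, $\Sigma_T=(1-\alpha_3)L_T$, $\rho_T=l_T$): It\^o's formula then identifies $\eta^i$ explicitly as $\eta^i=\Pi[D(\bar X^i-x^0)+F(\bar u^i-u^0)]+\Sigma[(D+\bar D)x^0+(F+\bar F)u^0+\bar b]$ by matching the $W^0$-diffusion, and only \emph{afterwards} are the conditional expectations $\mathbb E[\cdot|\mathscr F^{\bar Y^i}]$ and $\mathbb E[\cdot|\mathscr F^{\theta}]$ applied to the drift identity and the stationary condition, which yields \eqref{Riccati}, \eqref{Riccati2}, \eqref{BSDE0}, the formula $u^0=-\widetilde{\mathcal R}^{-1}(\bar\Sigma^{\top}x^0+\tilde\rho)$ and then \eqref{feed230129}, with no innovation process ever needed. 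To repair your argument, replace the filtered two-layer ansatz by this unfiltered one (your filtered relations then follow by projection, using $\check{\bar X}^i=x^0$), or else supply the missing filtering analysis identifying $\hat\eta^i$ and $\check\eta^i$.
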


\begin{remark}\label{remark222}

  (i) If there exists $P\in \Upsilon([0,T];\mathbb S^n)$, then the transformation \eqref{trans} yields that $P\leq \Pi$.

  (ii)  Noticing that Riccati equation \eqref{Riccati2} is an asymmetric indefinite equation with complex structure, its solvability is still an open problem. To discuss the well-posedness of \eqref{Riccati2}, we further assume that $\bar A=\delta E_n$, $\bar B=\bar D=\bar F=0$, $\alpha_2=\beta_2$ with constant $\delta$ and $n$-dimensional identity matrix $E_n$. Then \eqref{Riccati2} becomes
  \begin{equation}\label{Riccati3}
\left\{
\begin{aligned}
  &\dot{\Sigma}_t+\Sigma_t(A_t+\bar A_t)+A_t^{\top}\Sigma_t+D^{\top}_t\Sigma_tD_t -\breve\Sigma_t \breve{\mathcal R}^{-1}_t\breve\Sigma_t^{\top}+(1-\alpha_1)Q_t=0,\\
  &\Sigma_T=(1-\alpha_3) L_T,\quad \widetilde{\mathcal R}\gg 0,\quad t\in [0,T],
\end{aligned}
\right.
\end{equation}
where $\breve{\mathcal R}= (1-\beta_1)R+F^{\top}\Sigma F$, $\breve\Sigma=\Sigma B+D^{\top}\Sigma F+(1-\beta_2)S$. Noticing that Riccati equation \eqref{Riccati3} is a symmetric indefinite equation, which solvability can be obtained by the first step in Appendix \ref{Appendix D}.

\end{remark}

There is an equivalence relationship between different conditions frequently used in the solution of indefinite LQ control problems. The proof can be found in Appendix \ref{Appendix C}.
\begin{lemma}\label{rmk031337}
    The following conditions are equivalent:

    (i) There exists a relaxed compensator for Problem (MFD).

    (ii) Riccati equation \eqref{Riccati} admits a unique solution $\Pi\in C([0,T];\mathbb S^n)$.

    (iii) The map $u^i\rightarrow J_i(u^i_{\cdot})$ is uniformly convex.
\end{lemma}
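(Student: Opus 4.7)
The plan is to close the cycle (ii) $\Rightarrow$ (i) $\Rightarrow$ (iii) $\Rightarrow$ (ii), since (i) $\Rightarrow$ (ii) is exactly Theorem \ref{thm8314}. The step (ii) $\Rightarrow$ (i) is essentially immediate: if $\Pi$ solves Riccati \eqref{Riccati}, choose $P=\Pi$ and verify Condition (RC) via Proposition \ref{prop310}. The first inequality of \eqref{Riccaticondition} holds with equality by the Riccati equation, the terminal condition $\Pi_T=L_T\leq L_T$ is trivial, and $R+F^{\top}\Pi F=\mathcal R\gg 0$ is part of the Riccati hypothesis. Hence $\Pi$ is a relaxed compensator.

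For (i) $\Rightarrow$ (iii), I would use Lemma \ref{Lem35} to identify the second variation of $J_i$ (cf. \eqref{92121}) with that of $J_i^P$, which has weights $(Q^P,S^P,R^P,L_T^P)$ satisfying Condition (PD). Applying Schur's complement (Lemma \ref{Schur}) to complete the square gives, with $w:=v+(R^P)^{-1}(S^P)^{\top}\tilde X$,
\begin{equation*}
\mathbb E\bigg[\int_0^T\big\{\langle(Q^P-S^P(R^P)^{-1}(S^P)^{\top})\tilde X,\tilde X\rangle+\langle R^P w,w\rangle\big\}\mathrm{d}t+\langle L_T^P \tilde X_T,\tilde X_T\rangle\bigg]\geq \lambda_0\mathbb E\int_0^T|w_t|^2\mathrm{d}t,
\end{equation*}
using $R^P_t\geq\lambda_0 I$ together with Condition (PD). To convert this into a lower bound by $\|v\|_{L^2}^2$, I would substitute $v=w-(R^P)^{-1}(S^P)^{\top}\tilde X$ into \eqref{variational}, so that $\tilde X$ solves a closed-loop linear SDE driven by $w$; standard moment estimates then give $\|\tilde X\|_{L^2}\leq C\|w\|_{L^2}$, hence $\|v\|_{L^2}\leq(1+C')\|w\|_{L^2}$, yielding uniform convexity with explicit modulus.

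The step (iii) $\Rightarrow$ (ii) is the main obstacle. My plan is to adapt Sun--Yong's approach for stochastic LQ problems: uniform convexity in $u^i$ guarantees the existence and uniqueness of an optimal control for the homogeneous version of Problem (MFD) at every initial datum $(t,x)$, and the linear-quadratic structure forces the value function to be quadratic in $x$, namely $V(t,x)=\tfrac{1}{2}\langle\Pi_t x,x\rangle$ for some symmetric matrix $\Pi_t$. Continuity of $\Pi$ in $t$ comes from comparing value functions at nearby times, and a dynamic-programming or maximum-principle argument (mirroring Proposition \ref{Thm34}) then produces Riccati \eqref{Riccati}. The subtle step is to verify $\mathcal R=R+F^{\top}\Pi F\gg 0$: localizing the uniform-convexity estimate to a control perturbation supported on a vanishing time window $[t,t+h]$, dividing by $h$ and sending $h\downarrow 0$ yields a uniform lower bound on $R_t+F_t^{\top}\Pi_t F_t$. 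Uniqueness of $\Pi$ then follows from standard Riccati comparison combined with the strict convexity supplied by (iii).
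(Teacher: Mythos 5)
Your arrows (ii)$\Rightarrow$(i) and (i)$\Rightarrow$(ii) coincide with the paper's proof in Appendix \ref{Appendix C}: there, too, (i)$\Rightarrow$(ii) is read off from Theorem \ref{thm8314}, and (ii)$\Rightarrow$(i) follows because a solution $\Pi$ of \eqref{Riccati} trivially satisfies Condition (RC), hence is itself a relaxed compensator by Proposition \ref{prop310}. Your (i)$\Rightarrow$(iii) step is also sound, and is essentially a hands-on version of what the paper does by citation: the paper invokes \cite[Corollary 3.4 and Proposition 3.5]{SLY2016} to get uniform convexity of $u^i\mapsto J_i^P(u^i_{\cdot})$ under Condition (PD), and then transfers it to $J_i$ through the analogue of Lemma \ref{Lem35} with $x=0$ (identification of the homogeneous quadratic forms), which is exactly the device you use. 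Your completion of the square with $w=v+(R^P)^{-1}(S^P)^{\top}\tilde X$, combined with the closed-loop estimate $\|\tilde X\|_{L^2}\leq C\|w\|_{L^2}$ (legitimate since $P$ is continuous, hence bounded, and $R^P\gg 0$), correctly produces an explicit convexity modulus, so this part is fine and in fact more self-contained than the paper's.

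The weak point is (iii)$\Rightarrow$(ii). The paper does not prove this direction at all: it quotes \cite[Theorem 4.5]{SLY2016} for the equivalence (ii)$\Leftrightarrow$(iii), and that citation carries the real weight, since ``uniform convexity implies strongly regular solvability of the Riccati equation'' is a deep result. Your plan --- quadraticity of the value function, a DPP/maximum-principle derivation of \eqref{Riccati}, the $[t,t+h]$ localization to obtain $R+F^{\top}\Pi F\gg 0$, and comparison for uniqueness --- points in a viable direction (the localization heuristic, applied to a control equal to $u$ on $[t,t+h]$ and optimal afterwards, does yield $\tfrac12\langle(R_t+F_t^{\top}\Pi_tF_t)u,u\rangle\geq\lambda|u|^2$ in the limit), but as written it is a program rather than a proof: you have not shown that $\Pi$ is absolutely continuous (needed even to state \eqref{Riccati}), that uniform convexity at the initial time propagates to every intermediate time with a uniform constant, that the $o(h)$ expansions are uniform in $t$ and $u$, or that the feedback/verification step closing the Riccati derivation is legitimate before $R+F^{\top}\Pi F\gg 0$ is known. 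Filling these in amounts to reproving Sun--Li--Yong's theorem. The economical repair is to do what the paper does and invoke \cite[Theorem 4.5]{SLY2016} for this direction, keeping your self-contained arguments for the remaining arrows.
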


 \begin{remark}\label{rmk0313372}
  Based on the above analysis, one can find that relaxed compensator method is a more practical method compared to existing methods (Riccati equation or uniformly convex condition) for solving the indefinite problem. In fact, on the one hand, compared to solving directly the indefinite Riccati equation (see e.g. \cite{CLZ1998,CZ2000,RMZ2002,Xu2020}), it is easier to find a solution of inequality \eqref{Riccaticondition}, i.e., it is easier to find a relaxed compensator.

  On the other hand, compared to proposing the uniformly convex condition (i.e. the map $u\rightarrow J(u_{\cdot})$ is uniformly convex, see \cite{SLY2016,Wang20202,SXY2021}), the existence of relaxed compensator is easier to verify. This provides a more tractable, coefficient-based condition to guarantee uniform convexity.
\end{remark}}

\section{$\varepsilon$-Nash Equilibrium}\label{sec:varepsilon}

In the previous section, we obtain the  decentralized strategies $\bar{u}=(\bar{u}^1, \bar{u}^2, \cdots, \bar{u}^N)$, by introducing the auxiliary control problem and consistency condition, where (recall \eqref{92133}) $\bar u^i_{t}=\bar u_t^{i,P}=-(R^P_t)^{-1} [B_t^{\top}\hat{\varphi}_t^{i,P}+F_t^{\top}\hat{\eta}_t^{i,P}
+(S^P_t)^{\top}\hat{\bar X}_t^{i,P}+r_t^P]$. Next, we will verify that $\bar{u}^i$ is indeed an $\varepsilon$-Nash equilibrium.  {All results in this section are established under the same set of assumptions as those stated in Theorem  \ref{thm8314}, and these assumptions will not be restated prior to each lemma.} To do this, we first give the definition of $\varepsilon$-Nash equilibrium as follows.
\begin{definition}\label{epsilonnash}
  The control strategy $\bar{u}=(\bar{u}^1,\bar{u}^2,\cdots,\bar{u}^N)$, where $\bar{u}^i\in \mathcal{U}_{c}^{i}$, $1\leq i\leq N$, is called an $\varepsilon$-Nash equilibrium with respect to the cost functional $\mathcal{J}_i$, $1\leq i\leq N$, if there exists an $\varepsilon>0$, such that
  \begin{equation*}
    \mathcal{J}_i(\bar{u}_{\cdot}^i,\bar{u}_{\cdot}^{-i})\leq \mathcal{J}_i(u_{\cdot}^i,\bar{u}_{\cdot}^{-i})+\varepsilon,
  \end{equation*}
  where $u^i\in \mathcal{U}_{c}^{i}$ is any alternative control strategy for agent $\mathcal{A}_i$.
\end{definition}

{We assume that $\bar x^i$ is the corresponding state given by SDE \eqref{xx} with respect to $\bar u^i$ in the $N$ player model. Let $\bar x^{(N)}=\frac{1}{N}\sum_{i=1}^N\bar x^i$ be the average term, and $C_0$ is a constant independent of $N$, which may vary line by line. Then the following estimate holds.}
\begin{lemma}\label{lem72}
  \begin{equation*}
    \mathbb E\bigg[\sup_{0\leq t\leq T}\big|\bar x^{(N)}_t-x_t^0\big|^2\bigg]=O\Big(\frac{1}{N}\Big).
  \end{equation*}
\end{lemma}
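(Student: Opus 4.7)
The plan is to derive a linear SDE for the discrepancy $\delta_t^{(N)} := \bar x^{(N)}_t - x_t^0$ and then estimate the two exogenous forcing terms that drive it. Averaging \eqref{xx} with $u^i = \bar u^i$ over $i = 1,\ldots,N$ produces
\begin{equation*}
\begin{aligned}
\mathrm{d}\bar x^{(N)}_t &= \big\{(A_t+\bar A_t)\bar x^{(N)}_t + (B_t+\bar B_t)\bar u^{(N)}_t + b_t\big\}\mathrm{d}t \\
&\quad + \big\{(D_t+\bar D_t)\bar x^{(N)}_t + (F_t+\bar F_t)\bar u^{(N)}_t + \bar b_t\big\}\mathrm{d}W_t^0 + \frac{1}{N}\sum_{i=1}^N\big(\sigma_t\mathrm{d}W_t^i + \bar\sigma_t\mathrm{d}\overline W_t^i\big),
\end{aligned}
\end{equation*}
while \eqref{x0412} gives the analogous equation for $x^0$ with $\bar u^{(N)}$ replaced by $u^0_t = -\widetilde{\mathcal R}_t^{-1}(\bar\Sigma_t^\top x_t^0+\tilde\rho_t)$. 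Subtracting, $\delta^{(N)}$ satisfies a linear SDE with bounded coefficients $(A+\bar A)$, $(D+\bar D)$ and driving inhomogeneity given by $(B+\bar B)(\bar u^{(N)}-u^0)\mathrm{d}t + (F+\bar F)(\bar u^{(N)}-u^0)\mathrm{d}W_t^0 + \frac{1}{N}\sum_i(\sigma_t\mathrm{d}W_t^i+\bar\sigma_t\mathrm{d}\overline W_t^i)$.

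The key reduction is to express $\bar u^{(N)}_t - u^0_t$ explicitly via the feedback representation \eqref{feed230129}. Since $\bar u^i_t = -\mathcal R_t^{-1}\widetilde\Pi_t^\top(\hat{\bar X}_t^i - x_t^0) - \widetilde{\mathcal R}_t^{-1}(\bar\Sigma_t^\top x_t^0+\tilde\rho_t)$, the deterministic-in-$i$ part cancels in $\bar u^{(N)} - u^0$, leaving
\begin{equation*}
\bar u^{(N)}_t - u^0_t = -\mathcal R_t^{-1}\widetilde\Pi_t^\top\Big(\tfrac{1}{N}\sum_{i=1}^N\hat{\bar X}_t^i - x_t^0\Big).
\end{equation*}
Because the idiosyncratic noises $(W^i,\overline W^i)$ are mutually independent and independent of $W^0$, the processes $\bar X^i$ are, conditional on $\mathscr F^\theta = \mathscr F^{W^0}$, i.i.d., and so are the filtered states $\hat{\bar X}^i$. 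Combined with $\mathbb E[\hat{\bar X}_t^i|\mathscr F^\theta] = \mathbb E[\bar X_t^i|\mathscr F^\theta] = x_t^0$ (by the tower property and \eqref{px0}), this conditional LLN yields
\begin{equation*}
\mathbb E\Big[\big|\tfrac{1}{N}\sum_{i=1}^N(\hat{\bar X}_t^i - x_t^0)\big|^2\Big] = \tfrac{1}{N}\mathbb E\big[|\hat{\bar X}_t^1 - x_t^0|^2\big] = O(1/N)
\end{equation*}
uniformly in $t\in[0,T]$, the uniform bound coming from the a priori $L^2$ estimates on $\bar X^i$ and $x^0$ that follow from Theorem~\ref{thm8314} and (H1)--(H2).

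With this pointwise $L^2$ estimate in hand, I would conclude by applying the Burkholder--Davis--Gundy inequality together with Gronwall's lemma to the SDE for $\delta^{(N)}$. The martingale term $\frac{1}{N}\sum_i\int_0^\cdot (\sigma_t\mathrm{d}W_t^i+\bar\sigma_t\mathrm{d}\overline W_t^i)$ is a sum of $N$ orthogonal martingales, so its maximal $L^2$-norm is controlled by $\frac{C}{N}\int_0^T(|\sigma_t|^2+|\bar\sigma_t|^2)\mathrm{d}t = O(1/N)$. Combining this with $\mathbb E\int_0^T|\bar u^{(N)}_t-u^0_t|^2\mathrm{d}t = O(1/N)$ from the previous step and applying Gronwall produces the desired bound $\mathbb E[\sup_{0\leq t\leq T}|\delta^{(N)}_t|^2] = O(1/N)$.

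The main technical obstacle is justifying the conditional LLN step rigorously: one must verify that, given the common-noise filtration $\mathscr F^\theta$, the filtered estimators $\hat{\bar X}^i$ (which a priori involve the individual observation filtrations $\mathscr F^{\bar Y^i}$ built from $Y^{i,0}$, hence from $W^i,\overline W^i$ and $\theta$) are genuinely conditionally i.i.d. This is where the equality $\mathscr F^\theta = \mathscr F^{W^0}$ (guaranteed by the non-degeneracy of $\check\sigma$ in (H1)) is essential, since it allows one to condition on $W^0$ directly and exploit the independence of the idiosyncratic noise blocks across agents. Once this structural fact is in place, the remaining calculations are standard BDG/Gronwall estimates for a linear SDE with bounded coefficients.
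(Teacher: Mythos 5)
Your proposal is correct and takes essentially the same route as the paper: write the linear SDE for $\bar x^{(N)}-x^0$, establish $\mathbb E\int_0^T|\bar u^{(N)}_t-u^0_t|^2\mathrm{d}t=O(1/N)$ from the conditionally i.i.d. structure of the agents given $\mathscr F^{\theta}=\mathscr F^{W^0}$, and conclude by B-D-G and Gronwall. The only (minor) difference is that you reach the key $O(1/N)$ control estimate by passing through the feedback representation \eqref{feed230129} and a conditional LLN for the filtered states $\hat{\bar X}^i$, whereas the paper applies the same conditional-independence argument directly to the controls $\bar u^i$ via \eqref{92133}, citing the analogous lemma in \cite{Hu20182}.
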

\begin{proof}
  Recalling \eqref{xx} and \eqref{x0}, it holds that
  \begin{equation*}
    \left\{
    \begin{aligned}
      &\mathrm{d}(\bar x^{(N)}_t-x_t^0)=\big\{(A_t+\bar A_t)(\bar x^{(N)}_t-x_t^0)+(B_t+\bar B_t) \big(\bar u^{(N)}_t-u^0_t\big)\big\}\mathrm{d}t+ \frac{1}{N}\sum_{i=1}^N\sigma_t\mathrm{d}W_t^i\\
      &\qquad\qquad+\big\{(D_t+\bar D_t)(\bar x^{(N)}_t-x_t^0)+(F_t+\bar F_t) \big(\bar u^{(N)}_t- u^0_t\big)\big\}\mathrm{d}W_t^0 +\frac{1}{N}\sum_{i=1}^N\bar\sigma_t\mathrm{d}\overline W_t^i,\\
      &\bar x^{(N)}_0-x_0^0=0,
    \end{aligned}
    \right.
  \end{equation*}
  where $\bar u^{(N)}=\frac{1}{N}\sum_{i=1}^N\bar u^i$. Recalling \eqref{92133} and noticing \eqref{pHamiltonian}, we obtain $\bar u^i$ and $\bar u^j$ are identically distributed and conditional independent under $\mathbb{E}[\cdot|\mathscr{F}^{\theta}]$, then similar to \cite[Lemma 5.2]{Hu20182}, we have
  \begin{equation}\label{uN}
    \begin{aligned}
      \mathbb{E}\int_0^T\big|\bar u^{(N)}_t-\mathbb{E}[\bar u_t^i|\mathscr{F}^{\theta}_t]\big|^2\mathrm{d}t  = \frac{1}{N^2}\sum_{i=1}^N\mathbb{E} \int_0^T\big|\bar u_t^i-\mathbb{E}[\bar u_t^i|\mathscr{F}^{\theta}_t] \big|^2\mathrm{d}t \leq \frac{C_0}{N}=O\Big(\frac{1}{N}\Big).
    \end{aligned}
  \end{equation}
  Then, by Burkholder-Davis-Gundy (B-D-G) inequality  and Gronwall's inequality, we can complete the proof.
\end{proof}

Furthermore, recalling $\bar X^i_{\cdot}$ is the solution of \eqref{Hamiltonian}, we could obtain the following estimate.
\begin{lemma}\label{lem73}
  \begin{equation*}
    \sup_{1\leq i\leq N}\mathbb{E}\bigg[\sup_{0\leq t\leq T}|\bar x_t^i-\bar X_t^i|^2\bigg]=O\Big(\frac{1}{N} \Big).
  \end{equation*}
\end{lemma}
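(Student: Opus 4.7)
The plan is to subtract the state dynamics \eqref{xx} (driving $\bar x^i$ under the decentralized strategy $\bar u^i$) from the limiting dynamics \eqref{lxx} (driving $\bar X^i$ under the same $\bar u^i$). Since both SDEs share the initial data $x$, the idiosyncratic noises $\sigma_t\,\mathrm{d}W_t^i$ and $\bar\sigma_t\,\mathrm{d}\overline W_t^i$, the drift term $b_t$ and diffusion term $\bar b_t$, and the control terms $B_t\bar u_t^i$, $F_t\bar u_t^i$, these all cancel when forming the difference $\Delta^i_t := \bar x_t^i - \bar X_t^i$. What remains is
\begin{equation*}
    \begin{aligned}
      \mathrm{d}\Delta^i_t &= \bigl\{ A_t\Delta^i_t + \bar A_t(\bar x^{(N)}_t - x^0_t) + \bar B_t(\bar u^{(N)}_t - u^0_t) \bigr\}\mathrm{d}t \\
      &\quad + \bigl\{ D_t\Delta^i_t + \bar D_t(\bar x^{(N)}_t - x^0_t) + \bar F_t(\bar u^{(N)}_t - u^0_t) \bigr\}\mathrm{d}W_t^0,\quad \Delta^i_0 = 0,
    \end{aligned}
\end{equation*}
so $\Delta^i$ is a linear SDE driven only by the common-noise Brownian motion $W^0$, with an external perturbation measured by $\bar x^{(N)} - x^0$ and $\bar u^{(N)} - u^0$.

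Next, I would apply It\^o's formula to $|\Delta^i_t|^2$, use (H1) to bound $A, D, \bar A, \bar B, \bar D, \bar F$ uniformly on $[0,T]$, invoke the Burkholder-Davis-Gundy inequality on the stochastic integral, and then Gronwall's lemma to obtain a constant $C>0$ that is independent of both $i$ and $N$ such that
\begin{equation*}
    \mathbb{E}\Bigl[\sup_{0\leq t\leq T}|\Delta^i_t|^2\Bigr] \leq C\,\mathbb{E}\int_0^T \bigl(|\bar x^{(N)}_t - x^0_t|^2 + |\bar u^{(N)}_t - u^0_t|^2\bigr)\mathrm{d}t.
\end{equation*}
The key point here is that the Gronwall constant only depends on the bounds for $A,D$ (which are deterministic and uniform via (H1)), so the estimate is automatically uniform in $i$.

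Finally, I would bound the right-hand side by $O(1/N)$ using results already in hand. The first term is $O(1/N)$ directly by Lemma \ref{lem72}. For the second term, recall from \eqref{px0} that $u^0_t = \mathbb{E}[\bar u_t^i\mid\mathscr{F}^{\theta}_t]$ and from the representation \eqref{92133} that the $\{\bar u^i\}_{1\leq i\leq N}$ are identically distributed and conditionally independent given $\mathscr{F}^{\theta}$ (since $\mathscr{F}^{\theta} = \mathscr{F}^{W^0}$). Hence the conditional law-of-large-numbers estimate \eqref{uN} already established in the proof of Lemma \ref{lem72} gives $\mathbb{E}\int_0^T|\bar u^{(N)}_t - u^0_t|^2\,\mathrm{d}t = O(1/N)$. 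Combining these two ingredients yields the desired $O(1/N)$ bound uniformly in $i$.

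The proof itself is not where the real difficulty lies: the heavy lifting was done in Lemma \ref{lem72}, where the conditional independence of the $\bar u^i$ was exploited. The only point that requires mild care is verifying that the constant $C$ in the Gronwall step does not depend on $i$ (which follows because the coefficients in the $\Delta^i$ equation are non-random and bounded by (H1)) and that the $L^2$-integrability of $\bar u^i$ implicit in \eqref{uN} is uniform in $i$, which is guaranteed by the fact that all agents are statistically identical.
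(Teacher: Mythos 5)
Your proposal is correct and follows essentially the same route as the paper: form the difference SDE for $\bar x^i-\bar X^i$ (in which the control terms, idiosyncratic noises and inhomogeneous terms cancel), then control the remaining perturbation terms $\bar x^{(N)}-x^0$ and $\bar u^{(N)}-u^0$ by Lemma \ref{lem72} and the estimate \eqref{uN}, and conclude with the B-D-G and Gronwall inequalities. No gaps.
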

{\begin{proof}
    According to \eqref{xx} and \eqref{Hamiltonian}, it follows that
    \begin{equation*}
        \left\{
        \begin{aligned}
            &\mathrm{d}(\bar x_t^i-\bar X_t^i)=[A_t(\bar x_t^i-\bar X_t^i)+\bar A_t(\bar x^{(N)}_t-x^0_t)+\bar B_t(\bar u^{(N)}_t-u_t^0)]\mathrm{d}t\\
            &\qquad\quad +[D_t(\bar x_t^i-\bar X_t^i)+\bar D(\bar x^{(N)}_t-x^0_t)+\bar F_t(\bar u^{(N)}_t-u_t^0)]\mathrm{d}W_t^0,\\
            &\bar x_0^i-\bar X_0^i=0.
        \end{aligned}
        \right.
    \end{equation*}
    By Lemma \ref{lem72} and estimate \eqref{uN}, we get
    \begin{equation*}
        \mathbb E\bigg[\sup_{0\leq t\leq T}|\bar x_t^{(N)}-x^0_t|^2\bigg]\leq \frac{C_0}{N},\quad \text{and}\quad \mathbb E\bigg[\sup_{0\leq t\leq T}|\bar u_t^{(N)}-u^0_t|^2\bigg]\leq \frac{C_0}{N}.
    \end{equation*}
    By B-D-G inequality, we have
    \begin{equation*}
        \mathbb E\bigg[\sup_{0\leq t\leq T}|\bar x_t^i-\bar X_t^i|^2\bigg]\leq \frac{C_0}{N}+\mathbb E\int_0^T|\bar x_t^i-\bar X_t^i|^2\mathrm{d}t,
    \end{equation*}
    then the desired result can be obtained by Gronwall's inequality.
\end{proof}}

\begin{lemma}\label{lem112674}
  \begin{equation*}
    |\mathcal{J}_i(\bar u_{\cdot}^i,\bar u_{\cdot}^{-i})-J_i(\bar u_{\cdot}^i)|=O\Big(\frac{1}{\sqrt N}\Big),\qquad 1\leq i\leq N.
  \end{equation*}
\end{lemma}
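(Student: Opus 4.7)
The plan is to bound the difference $\mathcal J_i(\bar u^i, \bar u^{-i}) - J_i(\bar u^i)$ term-by-term by expanding both cost functionals according to \eqref{cost} and \eqref{lcost} and then exploiting the $L^2$-closeness results in Lemmas \ref{lem72}--\ref{lem73}. First I would pair each term in $\mathcal J_i$ with its counterpart in $J_i$, so that the integrand difference splits into finitely many blocks of the form $\langle Q_t a_t, a_t\rangle - \langle Q_t b_t, b_t\rangle$, $\langle S_t c_t, a_t\rangle - \langle S_t d_t, b_t\rangle$, $\langle R_t c_t, c_t\rangle - \langle R_t d_t, d_t\rangle$, together with terminal and linear analogues, where $a = \bar x^i - \alpha_1 \bar x^{(N)}$, $b = \bar X^i - \alpha_1 x^0$, $c = \bar u^i - \beta_1 \bar u^{(N)}$, $d = \bar u^i - \beta_1 u^0$, and similar expressions with $\alpha_2, \alpha_3, \beta_2$.

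Next I would apply the standard algebraic identities $\langle Q a, a\rangle - \langle Q b, b\rangle = \langle Q(a-b), a+b\rangle$ and $\langle R c, c\rangle - \langle R d, d\rangle = \langle R(c-d), c+d\rangle$ inside each block, together with the cross identity $\langle S c, a\rangle - \langle S d, b\rangle = \langle S(c-d), a\rangle + \langle S d, a-b\rangle$, and analogous manipulations for the linear and terminal terms. Taking expectations, applying Cauchy-Schwarz, and using (H2) to pull out the uniform bounds on $Q, R, S, q, r, L_T, l_T$, each block is controlled by a factor of the form $(\mathbb E\!\int_0^T |a_t-b_t|^2 \mathrm{d}t)^{1/2}$ or $(\mathbb E\!\int_0^T |c_t-d_t|^2 \mathrm{d}t)^{1/2}$ (and their terminal analogues), multiplied by a factor that involves sums like $\mathbb E\!\int_0^T(|a_t|^2 + |b_t|^2 + |c_t|^2 + |d_t|^2)\mathrm{d}t$.

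The first factors are $O(1/\sqrt N)$: from Lemma \ref{lem73} we get $(\mathbb E\sup_t |\bar x^i_t - \bar X^i_t|^2)^{1/2} = O(1/\sqrt N)$, from Lemma \ref{lem72} we get $(\mathbb E\sup_t |\bar x^{(N)}_t - x^0_t|^2)^{1/2} = O(1/\sqrt N)$, and from the estimate \eqref{uN} obtained in the proof of Lemma \ref{lem72} we get $(\mathbb E\!\int_0^T |\bar u^{(N)}_t - u^0_t|^2\mathrm{d}t)^{1/2} = O(1/\sqrt N)$. The second factors must be shown to be $O(1)$ uniformly in $N$. This requires uniform $L^2$ bounds on $\bar u^i$, $\bar x^i$, $\bar X^i$, $x^0$, $u^0$: for $\bar X^i$ and $x^0$ these follow from standard SDE estimates applied to \eqref{feedback x} and \eqref{x0412} under (H1)--(H2); the feedback representation \eqref{feed230129} then yields the $L^2$ bound on $\bar u^i$; and Lemmas \ref{lem72}--\ref{lem73} propagate these to $\bar x^i$ and $\bar u^{(N)}$.

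The main obstacle is keeping the bookkeeping clean: the cost $\mathcal J_i$ contains three different averages ($\alpha_1, \alpha_2, \alpha_3$ in the state, $\beta_1, \beta_2$ in the control) and inhomogeneous linear terms $q, r, l_T$, so every grouping above generates several cross-terms that must each be traced back to either an $(\bar x^i - \bar X^i)$ difference, an $(\bar x^{(N)} - x^0)$ difference, or a $(\bar u^{(N)} - u^0)$ difference. Once this combinatorial step is organised, combining the $O(1/\sqrt N)$ factors with the $O(1)$ factors, integrating in time, and summing over the finitely many blocks, yields $|\mathcal J_i(\bar u^i, \bar u^{-i}) - J_i(\bar u^i)| = O(1/\sqrt N)$ as claimed.
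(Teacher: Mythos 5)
Your proposal follows essentially the same route as the paper: expand the difference of the two cost functionals block by block, apply the identity $\langle Qa,a\rangle-\langle Qb,b\rangle=\langle Q(a-b),a-b\rangle+2\langle Q(a-b),b\rangle$ (algebraically equivalent to yours), use Cauchy--Schwarz with the uniform bounds from (H1)--(H2), and control the resulting differences via Lemma \ref{lem72}, Lemma \ref{lem73}, the estimate \eqref{uN}, and the uniform $L^2$ bounds on $\bar X^i$, $x^0$, $\bar u^i$. The paper carries out only the quadratic state block explicitly and treats the remaining blocks as analogous, which is exactly the bookkeeping you describe, so your argument is correct and matches the paper's proof.
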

\begin{proof}
  According to \eqref{cost} and \eqref{lcost}, we have
  \begin{equation*}
    \begin{aligned}
      &\mathcal{J}_i(\bar u_{\cdot}^i,\bar u_{\cdot}^{-i})-J_i(\bar u_{\cdot}^i)
      =\frac{1}{2}\mathbb{E}\Big[\int_0^T\big\{\langle Q_t(\bar x_t^i-\alpha_1\bar x^{(N)}_t)+ 2q_t,\bar x_t^i-\alpha_1\bar x^{(N)}_t\rangle-\langle Q_t(\bar X_t^i -\alpha_1 x_t^0)\\
      &\quad+2q_t,\bar X_t^i-\alpha_1 x_t^0\rangle+\langle R_t(\bar u_t^i-\beta_1 \bar u^{(N)}_t)+r_t, \bar u_t^i-\beta_1 \bar u^{(N)}_t\rangle-\langle R_t(\bar u_t^i-\beta_1 \bar u^{0}_t)+r_t, \bar u_t^i-\beta_1 \bar u^{0}_t\rangle \\
      &\quad+2\langle S_t(\bar u_t^i-\beta_2\bar u_t^{(N)}), \bar x_t^i-\gamma_2\bar x_t^{(N)}\rangle -2\langle S_t(\bar u_t^i-\beta_2\bar u_t^{0}), \bar X_t^i-\gamma_2 x_t^{0}\rangle\big\}\mathrm{d}t\\
      &\quad+\langle L_T(\bar x_T^i-\alpha_3\bar x^{(N)}_T)+2l_T,\bar x_T^i-\alpha_3\bar x^{(N)}_T\rangle-\langle L_T(\bar X_T^i-\alpha_3 x_T^0)+ 2l_T, \bar X_T^i-\alpha_3 x_T^0\rangle\Big].
    \end{aligned}
  \end{equation*}
  For the first part, noticing $\langle Qx, x\rangle-\langle Qy, y\rangle =\langle Q(x-y), x-y\rangle +2\langle Q(x-y), y\rangle$, we have
  {\begin{equation*}\label{112443}
    \begin{aligned}
      &\mathbb{E} \int_0^T\big\{\langle Q_t(\bar x_t^i-\alpha_1 \bar x^{(N)}_t), \bar x_t^i-\alpha_1 \bar x^{(N)}_t\rangle-\langle Q_t(\bar X_t^i-\alpha_1 x_t^0), \bar X_t^i-\alpha_1 x_t^0\rangle \big\}\mathrm{d}t \\
      &\leq \mathbb E\int_0^T\langle Q[\bar x_t^i-\bar X_t^i-\alpha_1(\bar x_t^{(N)}-x_t^0)], \bar x_t^i-\bar X_t^i-\alpha_1(\bar x_t^{(N)}-x_t^0)\rangle \mathrm{d}t\\
      &\qquad \quad +2\mathbb E\int_0^T\langle Q[\bar x_t^i-\bar X_t^i-\alpha_1(\bar x_t^{(N)}-x_t^0)], \bar X_t^i-\alpha_1 x_t^0\rangle \mathrm{d}t\\
      &\leq 2\mathbb E\int_0^T\big\{\langle Q(\bar x_t^i-\bar X_t^i), \bar x_t^i-\bar X_t^i\rangle+\alpha_1^2\langle Q(\bar x^{(N)}_t-x_t^0),\bar x^{(N)}_t-x_t^0\rangle   \big\}\mathrm{d}t\\
      &\qquad\quad +C_0\int_0^T\big[\mathbb{E}|\bar x_t^i-\alpha_1\bar x^{(N)}_t-(\bar X_t^i-\alpha_1x_t^0)|^2\big]^{\frac{1}{2}}\big[ \mathbb{E}|\bar X_t^i-\alpha_1x_t^0|^2\big]^{\frac{1}{2}}\mathrm{d}t\\
      &\leq C_0\int_0^T\Big\{\mathbb{E}|\bar x_t^i-\bar X_t^i|^2+\mathbb{E}|\bar x^{(N)}_t-x_t^0|^2 +\big[\mathbb{E}|\bar x_t^i-\bar X_t^i-\alpha_1(\bar x^{(N)}_t-x_t^0)|^2\big]^{\frac{1}{2}}\big[ \mathbb{E}|\bar X_t^i|^2+\mathbb E|x_t^0|^2\big]^{\frac{1}{2}}\Big\}\mathrm{d}t\\
      &\leq C_0\int_0^T\Big\{\mathbb{E}|\bar x_t^i-\bar X_t^i|^2+\mathbb{E}|\bar x^{(N)}_t-x_t^0|^2 +\big[ \mathbb{E}|\bar x_t^i-\bar X_t^i|^2+\mathbb{E}|\bar x^{(N)}_t-x_t^0|^2\big]^{\frac{1}{2}} \Big\}\mathrm{d}t=O\Big(\frac{1}{\sqrt{N}}\Big),
    \end{aligned}
  \end{equation*}}\noindent
  where the last inequality is due to Lemma \ref{lem72}, Lemma \ref{lem73} and $\mathbb{E}[\sup_{0\leq t\leq T}(| \bar X_t^i|^2+|x_t^0|^2)]\leq C_0$. Similarly, we also know that the second, third and fourth parts are all $\frac{1}{\sqrt N}$ order. Then we can obtain the desired result.
\end{proof}

Now we consider the perturbation to $i$-th agent, i.e. the agent $\mathcal{A}_i$ choose an alternative strategy $u^i\in \mathscr{U}_{c}^{i}$, while other agents $\mathcal{A}_j, j\ne i$ still take the decentralized strategy $\bar u^j$. Then the perturbed centralized state of $\mathcal{A}_k$, $k=1,2,\cdots,N$ is given by
\begin{equation}\label{pixx}
  \left\{
    \begin{aligned}
      \mathrm{d}x_t^i&=\big(A_tx_t^i+B_tu_t^i+\bar A_t x^{(N)}_t+\bar B_t u_t^{(N)}+b_t\big) \mathrm{d}t+ \sigma_t \mathrm{d}W_t^i\\
      &\quad+\big(D_t x_t^i+F_t u_t^i+\bar D_t x^{(N)}_t+\bar F_t u_t^{(N)} +\bar b_t\big)\mathrm{d}W_t^0+\bar\sigma_t\mathrm{d}\overline W_t^i,\quad        x_0^i=x,\\
      \mathrm{d}x_t^j&=\big(A_tx_t^j+B_t\bar u_t^j+\bar A_tx^{(N)}+\bar B_t u_t^{(N)}+b_t\big)\mathrm{d}t+\sigma_t\mathrm{d}W_t^j\\
      &\quad +\big(D_tx_t^j+F_t\bar u_t^j+\bar D_tx^{(N)}_t+\bar F_tu_t^{(N)}+\bar b_t\big)\mathrm{d}W_t^0+\bar\sigma_t\mathrm{d}W_t^j,\quad x_0^j=x,\quad j\ne i,
    \end{aligned}
  \right.
\end{equation}
where  $ x^{(N)}=\frac{1}{N}\sum_{k=1}^N x^k$ and $u^{(N)}=\frac{1}{N}(\sum_{j\ne i}\bar u^j+u^i)$. The cost functional of $i$-th agent $\mathcal{A}_i$ is
\begin{equation*}
  \begin{aligned}
    \mathcal J_i(u_{\cdot}^i,\bar u_{\cdot}^{-i})
    &\!=\!\frac{1}{2}\mathbb{E}\bigg[\!\int_0^T\!\!\!\big\{\langle Q_t( x_t^i -\alpha_1 x^{(N)}_t)\!+\! 2q_t, x_t^i\!-\!\alpha_1  x^{(N)}_t\rangle+\langle R_t(u_t^i-\beta_1 u_t^{(N)})\!+\!2r_t, u_t^i\!-\!\beta_1  u_t^{(N)}\rangle \\
    & \quad +2\langle S_t(u_t^i-\beta_2  u_t^{(N)}),  x_t^i-\alpha_2 x_t^{(N)}\rangle\big\}\mathrm{d}t+\langle L_T( x_T^i-\alpha_3  x^{(N)}_T)+2l_T,  x_T^i-\alpha_3  x^{(N)}_T\rangle\bigg].
  \end{aligned}
\end{equation*}
In addition, the related decentralized states of all agents with perturbation satisfy
\begin{equation}\label{lixx}
  \left\{
    \begin{aligned}
      \mathrm{d} X_t^i&=\big(A_t  X_t^i+B_t  u_t^i+\bar A_tx_t^0+\bar B_tu_t^0+b_t\big) \mathrm{d}t +\sigma_t\mathrm{d}W_t^i\\
      &\quad +\big(D_t  X_t^i+F_t  u_t^i+\bar D_tx_t^0+\bar F_tu_t^0+\bar b_t\big)\mathrm{d}W_t^0+\bar \sigma_t\mathrm{d}\overline W_t^i,\quad
       X_0^i=x,\\
      \mathrm{d} X_t^j&=\big(A_t  X_t^j+B_t \bar u_t^j+\bar A_tx_t^0+\bar B_tu_t^0+b_t\big) \mathrm{d}t +\sigma_t\mathrm{d}W_t^j\\
      &\quad +\big(D_t  X_t^j+F_t  u_t^j+\bar D_tx_t^0+\bar F_tu_t^0+\bar b_t\big)\mathrm{d}W_t^0+\bar \sigma_t\mathrm{d}\overline W_t^j,\quad
       X_0^j=x,\quad j\ne i, \\
    \end{aligned}
  \right.
\end{equation}

In order to show that $\bar u=(\bar u^1,\bar u^2,\cdots,\bar u^N)$ is an $\varepsilon$-Nash equilibrium, we need to prove $\mathcal{J}_i(\bar u_{\cdot}^i,\bar u_{\cdot}^{-i}) \leq \mathcal{J}_i( u_{\cdot}^i,\bar u_{\cdot}^{-i})+\varepsilon$. Thus we only consider the alternative strategy $u^i\in \mathscr{U}_{c}^{i}$ such that $\mathcal{J}_i(u_{\cdot }^i,\bar u_{\cdot}^{-i})\leq \mathcal{J}_i(\bar u_{\cdot}^i,\bar u_{\cdot}^{-i})$. In existing literature on LP system, usually assumes that the coefficients of the cost functional satisfies Condition (PD), and
\begin{equation}\label{important}
  \mathbb{E} \int_0^T|u_t^i|^2 \mathrm{d}t \leq C_0,
\end{equation}
which is the key inequality to proving the $\varepsilon$-Nash equilibrium, can be obtained by simple calculation. {Specifically, the boundedness condition \eqref{important} is introduced to guarantee that the perturbed cost functionals $\mathcal J_i(u^i_{\cdot}, \bar u_{\cdot}^{-i})$ and $J_i(u^i_{\cdot})$ admit desirable approximation properties (see Lemma \ref{lemma6235565}). Interesting readers can refer to \cite{HMC2007,Hu20182,CDW2024} for the positive-definite case. }  It is worth mentioning that compared with some literature on the indefinite MFG (see \cite{Xu2020,Wang2020,Wang20202}), our results have essential difference. \cite{Xu2020} directly assumes that all admissible strategies are uniformly bounded (i.e. $\sup_{1\leq i\leq N}\sup_{0\leq t\leq T}\mathbb E|u_t^i|^2<C_0$), then the inequality $\mathbb{E}\int_0^T|u_t^i|^2 \mathrm{d}t\leq C_0$ is obvious. \cite{Wang2020} and \cite{Wang20202} studied respectively MFGs with indefinite state weight and indefinite control weight in the cost functional. They all assume that the MFG problem is uniformly convex (i.e. the map $u\rightarrow J(u_{\cdot})$ is uniformly convex), then the inequality $\mathbb{E}\int_0^T|u_t^i|^2 \mathrm{d}t \leq C_0$ can be directly derived from the uniform convexity. In summary, the above literature adds some additional assumptions to make the inequality hold, rather than directly address the problem itself. As a contrast, our model allows that the state weight and control weight are all indefinite, and the assumptions of uniform boundedness of admissible strategy set or uniformly convexity of cost functional is no longer required. Let us now study how to prove the inequality $\mathbb{E}\int_0^T|u_t^i|^2 \mathrm{d}t\leq C_0$ holds without additional assumption.

Inspired by the method of equivalent cost functional, we would like to find an equivalent cost functional. We first give some notations as follows,
\begin{equation*}
    \left\{
      \begin{aligned}
        &\tilde{Q}^P=\alpha^2_1Q+\bar D^{\top}P\bar D,\quad  \bar{Q}^P =-\alpha_1 Q+\bar A^{\top}P+\bar D^{\top}P D, \quad  \tilde{q}^P=q+Pb+D^{\top}P\bar b,\\ &\bar{q}^P=-\alpha_1 q+\bar b^{\top}P\bar D,\quad \hat Q^P=-\beta_2 S+\bar B^{\top}P+\bar F^{\top}PD,\quad \tilde S^P=\bar F^{\top}P\bar D+\alpha_2\beta_2 S,\\
        &\tilde R^P=\beta_1 R+\bar F^{\top}P\bar F,\quad
        \tilde{r}^P=r+F^{\top}P\bar b,\quad \bar{r}^P=-\alpha_2S+\bar D^{\top}PF,\\
        &\hat q^{P}=\bar F^{\top}P\bar b-\beta_1 r,\quad \hat r^P=\bar F^{\top}PF-\beta_1 R,\quad \tilde{M}^P=\bar b^{\top}P\bar b+\sigma^{\top} P\sigma+\bar\sigma^{\top} P\bar\sigma.
      \end{aligned}
    \right.
\end{equation*}
We also introduce
\begin{equation*}
  \begin{aligned}
    &\mathcal{J}^P_i(u_{\cdot}^i,\bar{u}_{\cdot}^{-i})
    =\frac{1}{2}\mathbb{E}\Big[\int_0^T\big\{\langle Q^P_tx_t^i+2\tilde q^P_t+2S^P_tu^i_t,x_t^i\rangle +\langle R^P_t u_t^i+2\tilde r^P_t, u_t^i\rangle+\langle \tilde{Q}^P_t x^{(N)}_t, x^{(N)}_t\rangle\\% \!+\!2\langle S^P_tx_t^i, u_t^i\rangle\\
    &\qquad +2\langle \bar{Q}^P_tx_t^i+\bar{r}^P_tu_t^i+ \bar{q}^P_t, x^{(N)}_t\rangle+\langle \tilde R^P_{t}u_t^{(N)},u_t^{(N)}\rangle+2\langle\hat Q^P_{t}x_t^i+\hat r^P_tu_t^i+\hat q_t^P,u_t^{(N)}\rangle+\tilde{M}^P_t \\
    &\qquad+2\langle \tilde S^P_{t}u_t^{(N)},x_t^{(N)}\rangle\big\}\mathrm{d}t+\langle L^P_Tx_T^i+2l_T, x_T^i\rangle -2\alpha_3\langle L_Tx_T^i +l_T, x^{(N)}_T\rangle+\alpha_3^2\langle L_Tx^{(N)}_T,x^{(N)}_T\rangle  \Big].
  \end{aligned}
\end{equation*}
{where $(Q^P,S^P, R^P,L^P_T)$ is defined in \eqref{notation}. Compared with the auxiliary limiting cost $J_i^P$ (defined in \eqref{JiP} ) and notations \eqref{notation}, we have separated the limiting terms $(x^0,u^0)$ from the inhomogeneous terms and rewritten them as the average terms $(x^{(N)},u^{(N)})$.} Then we can show the following relationship between $\mathcal{J}_i(u_{\cdot}^i,\bar{u}_{\cdot}^{-i})$ and $\mathcal{J}^P_i(u_{\cdot}^i,\bar{u}_{\cdot}^{-i})$, which plays a key role in our analysis. The proof is similar to Lemma \ref{Lem35}, so we omit it.

\begin{lemma}\label{lemma6235555}
   For any given initial value $x\in \mathbb R^n$ and $u^i\in \mathcal{U}_{c}^{i}$, we have
  \begin{equation*}
    \mathcal{J}_i(u_{\cdot}^i,\bar u_{\cdot}^{-i})=\mathcal{J}^P_i(u_{\cdot}^i,\bar u_{\cdot}^{-i})+\frac{1}{2}\langle P_0x, x\rangle.
  \end{equation*}
\end{lemma}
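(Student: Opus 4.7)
The plan is to mirror the proof of Lemma \ref{Lem35} in the perturbed $N$-player setting. The only structural change is that the coupling terms in the cost now pair $x_t^i$ with the empirical averages $x_t^{(N)}$ and $u_t^{(N)}$ instead of with the limiting pair $(x_t^0, u_t^0)$. The notations $\tilde Q^P, \bar Q^P, \hat Q^P, \tilde S^P, \tilde R^P, \tilde r^P, \bar r^P, \hat r^P, \tilde q^P, \bar q^P, \hat q^P, \tilde M^P$ introduced just before the lemma are designed precisely so that every regrouping appearing in Lemma \ref{Lem35} still goes through term by term.

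Concretely, I would apply It\^o's formula to $\langle P_t x_t^i, x_t^i\rangle$ along the perturbed state $x^i$ from \eqref{pixx}, integrate on $[0,T]$, and take expectation. Since $P\in\Upsilon([0,T];\mathbb S^n)$ is deterministic and bounded under (H1) and $x^i\in S^2_{\mathscr F}([0,T];\mathbb R^n)$, the $dW^i,dW^0,d\overline W^i$ integrals vanish in expectation. The drift integrand then produces: the self-interaction block $\langle(\dot P+PA+A^{\top}P+D^{\top}PD)x_t^i,x_t^i\rangle$; the control cross block $2\langle(PB+D^{\top}PF)x_t^i,u_t^i\rangle$; the averaging cross blocks $2\langle(P\bar A+D^{\top}P\bar D)x_t^i,x_t^{(N)}\rangle$ and $2\langle(P\bar B+D^{\top}P\bar F)x_t^i,u_t^{(N)}\rangle$; the diffusion-generated quadratics $\langle F^{\top}PFu_t^i,u_t^i\rangle$, $\langle\bar D^{\top}P\bar D x_t^{(N)},x_t^{(N)}\rangle$, $\langle\bar F^{\top}P\bar F u_t^{(N)},u_t^{(N)}\rangle$ along with their mixed analogues; the inhomogeneous pairings against $b_t,\bar b_t$; and the constant $\sigma_t^{\top}P_t\sigma_t+\bar\sigma_t^{\top}P_t\bar\sigma_t$.

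Next, I would expand $\mathcal J_i(u^i,\bar u^{-i})$ by completing the squares around $\alpha_1 x^{(N)}$, $\alpha_2 x^{(N)}$, $\beta_1 u^{(N)}$, $\beta_2 u^{(N)}$, so that each original weight $Q,R,S,L_T$ generates both a pure-$x^i$/$u^i$ piece and cross/quadratic pieces against $x^{(N)},u^{(N)}$. Adding to this the drift contribution above, each group collapses exactly into the corresponding coefficient of $\mathcal J_i^P$: the self-block plus $Q$ contribution gives $Q^P$; the control cross plus $S$ gives $S^P$; the $F^{\top}PF$ quadratic plus $R$ gives $R^P$; the $(x^{(N)},u^{(N)})$ pairings build $\tilde Q^P,\bar Q^P,\hat Q^P,\tilde S^P,\tilde R^P,\bar r^P,\hat r^P$; the affine data build $\tilde q^P,\bar q^P,\hat q^P,\tilde r^P$; and the noise constant yields $\tilde M^P$. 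At the terminal time, $\mathbb E\langle P_T x_T^i,x_T^i\rangle$ combined with the rewriting of $\langle L_T(x_T^i-\alpha_3 x_T^{(N)})+2l_T,x_T^i-\alpha_3 x_T^{(N)}\rangle$ yields precisely the boundary block of $\mathcal J_i^P$ involving $L_T^P$ and the $\alpha_3,\alpha_3^2$ pieces.

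Collecting everything, the It\^o identity $\mathbb E\langle P_T x_T^i,x_T^i\rangle-\langle P_0 x,x\rangle=\mathbb E\int_0^T(\text{drift})\,dt$ rearranges exactly to $2[\mathcal J_i(u^i,\bar u^{-i})-\mathcal J_i^P(u^i,\bar u^{-i})]=\langle P_0 x,x\rangle$, which is the claim. The only obstacle is the algebra of matching each of the many coefficient groups one by one; no technical subtleties arise, since all integrability is automatic under (H1)--(H2), the boundedness of $P$, and $u^i\in\mathscr U_c^i$.
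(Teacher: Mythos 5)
Your proposal is correct and is essentially the paper's own argument: the paper omits the proof precisely because it is "similar to Lemma \ref{Lem35}", i.e.\ apply It\^o's formula to $\langle P_t x_t^i, x_t^i\rangle$ along the perturbed state \eqref{pixx}, take expectations so the stochastic integrals vanish, and regroup the drift and terminal terms with the expanded cost so that they collapse into the coefficients $(Q^P,S^P,R^P,L^P_T)$ and $\tilde Q^P,\bar Q^P,\hat Q^P,\tilde S^P,\tilde R^P,\tilde q^P,\bar q^P,\hat q^P,\tilde r^P,\bar r^P,\hat r^P,\tilde M^P$ defining $\mathcal J_i^P$. The only difference from Lemma \ref{Lem35} is that the averages $(x^{(N)},u^{(N)})$ replace $(x^0,u^0)$, exactly as you note, and your bookkeeping of the resulting cross terms matches the paper's intended computation.
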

\begin{remark}\label{remark6235566}
  For any $u^{i,1}, u^{i,2}\in \mathcal U_c^i$, we have

  (i) $\mathcal{J}_i(u_{\cdot}^{i,1},\bar{u}_{\cdot}^{-i})<\mathcal{J}_i(u_{\cdot}^{i,2},\bar{u}_{\cdot}^{-i})$ if and only if $\mathcal{J}^P_i(u_{\cdot}^{i,1},\bar{u}_{\cdot}^{-i})<\mathcal{J}^P_i(u_{\cdot}^{i,2},\bar{u}_{\cdot}^{-i})$;

  (ii) $\mathcal{J}_i(u_{\cdot}^{i,1},\bar{u}_{\cdot}^{-i})=\mathcal{J}_i(u_{\cdot}^{i,2},\bar{u}_{\cdot}^{-i})$ if and only if $\mathcal{J}^P_i(u_{\cdot}^{i,1},\bar{u}_{\cdot}^{-i})=\mathcal{J}^P_i(u_{\cdot}^{i,2},\bar{u}_{\cdot}^{-i})$;

  (iii) $\mathcal{J}_i(u_{\cdot}^{i,1},\bar{u}_{\cdot}^{-i})>\mathcal{J}_i(u_{\cdot}^{i,2},\bar{u}_{\cdot}^{-i})$ if and only if $\mathcal{J}^P_i(u_{\cdot}^{i,1},\bar{u}_{\cdot}^{-i})>\mathcal{J}^P_i(u_{\cdot}^{i,2},\bar{u}_{\cdot}^{-i})$.
\end{remark}
\begin{remark}
Motivated by Lemma \ref{lemma6235555} and Remark \ref{remark6235566}, to obtain the uniform estimate \eqref{important} under indefinite coefficients, it seems that we can introduce relaxed compensator, and then consider the alternative control $u^i\in \mathcal{U}_{c}^{i}$ such that $\mathcal{J}^P_i(u_{\cdot}^i,\bar{u}_{\cdot}^{-i})\leq \mathcal{J}^P_i(\bar{u}_{\cdot}^i,\bar{u}_{\cdot}^{-i})$. However, since the form of $\mathcal J^P_i(u^i_{\cdot},\bar u^{-i}_{\cdot})$ is very complex and cannot be rewritten as a completely square form, we cannot directly obtain $R^P_t\mathbb E\int_0^T|u_t^i|^2\mathrm{d}t\leq \mathcal J^P_i(u^i_{\cdot},\bar u_{\cdot}^{-i})$, which is an important step in proving  $\mathbb{E}\int_0^T|u_t^i|^2 \mathrm{d}t\leq C_0$.
\end{remark}

Next, we use the approximated method to obtain \eqref{important} as $N\rightarrow \infty$. To do this, we first present some estimates of the perturbed state and cost functional.
\begin{lemma}\label{lem81647}
  \begin{equation}\label{81653}\begin{aligned}
    \mathbb E\bigg[\sup_{0\leq t\leq T}\big| x^{(N)}_t-x_t^0\big|^2\bigg]&\leq \frac{C_0}{N}+\frac{C_0}{N^2}\mathbb E\int_0^T|u_t^i|^2\mathrm{d}t,\\
    \sup_{1\leq i\leq N}\mathbb{E}\bigg[\sup_{0\leq t\leq T}| x_t^i- X_t^i|^2\bigg]&\leq \frac{C_0}{N}+\frac{C_0}{N^2}\mathbb E\int_0^T|u_t^i|^2\mathrm{d}t.
  \end{aligned}\end{equation}
\end{lemma}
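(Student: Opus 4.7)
The plan is to prove the two estimates in turn, both based on linear SDE comparison, Burkholder–Davis–Gundy (B-D-G), and Gronwall's inequality, exploiting that only agent $\mathcal A_i$ deviates from the decentralized strategy so the contribution of $u^i-\bar u^i$ to the empirical average carries a crucial factor $1/N$. First I would set $\Delta^{(N)}:=x^{(N)}-\bar x^{(N)}$ and average the perturbed system \eqref{pixx} over $k=1,\dots,N$, then subtract the equation obtained by averaging \eqref{xx} driven by $\bar u^k$. The independent individual noises $\sigma\,\mathrm dW^k$ and $\bar\sigma\,\mathrm d\overline W^k$ appear identically in both sums and cancel exactly, leaving a linear SDE for $\Delta^{(N)}$ driven only by $W^0$ with forcing $(B+\bar B)(u^{(N)}-\bar u^{(N)})$ in the drift and $(F+\bar F)(u^{(N)}-\bar u^{(N)})$ in the diffusion, and with $\Delta^{(N)}_0=0$. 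Because the perturbation is restricted to agent $\mathcal A_i$, one has the key identity $u^{(N)}-\bar u^{(N)}=\frac{1}{N}(u^i-\bar u^i)$, so $\mathbb E\int_0^T|u^{(N)}_t-\bar u^{(N)}_t|^2\mathrm dt\le \frac{C_0}{N^2}\mathbb E\int_0^T|u^i_t|^2\mathrm dt+\frac{C_0}{N^2}$, where I use $\bar u^i\in S^2$ (this follows from Proposition \ref{0309311}, guaranteeing $\mathbb E\int_0^T|\bar u^i_t|^2\mathrm dt\le C_0$). Applying B-D-G and Gronwall's inequality to $\Delta^{(N)}$ yields the same bound for $\mathbb E[\sup_t|\Delta^{(N)}_t|^2]$, and combining with Lemma \ref{lem72} via the triangle inequality produces the first estimate in \eqref{81653}.

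For the second estimate I would subtract \eqref{lixx} from \eqref{pixx} with both equations using the same perturbed control $u^i$ for agent $\mathcal A_i$. The $Bu^i$ and $Fu^i$ terms, as well as the individual Brownian contributions $\sigma\,\mathrm dW^i$ and $\bar\sigma\,\mathrm d\overline W^i$, cancel, leaving a linear SDE for $x^i-X^i$ with forcing $\bar A(x^{(N)}-x^0)+\bar B(u^{(N)}-u^0)$ in the drift and $\bar D(x^{(N)}-x^0)+\bar F(u^{(N)}-u^0)$ in the diffusion. Splitting $u^{(N)}-u^0=(u^{(N)}-\bar u^{(N)})+(\bar u^{(N)}-u^0)$, the first summand is controlled by the $1/N^2$ bound from the previous step, while the second is $O(1/N)$ by \eqref{uN}, giving
\begin{equation*}
\mathbb E\int_0^T|u^{(N)}_t-u^0_t|^2\mathrm dt\le \frac{C_0}{N}+\frac{C_0}{N^2}\mathbb E\int_0^T|u^i_t|^2\mathrm dt.
\end{equation*}
Feeding this together with the first estimate of \eqref{81653} into the B-D-G inequality applied to $x^i-X^i$ and invoking Gronwall's lemma produces the claimed bound on $\mathbb E[\sup_t|x^i_t-X^i_t|^2]$, uniformly in $i$.

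The main technical point, rather than a genuine obstacle, is the careful bookkeeping needed to isolate the $1/N$ factor in $u^{(N)}-\bar u^{(N)}=\frac{1}{N}(u^i-\bar u^i)$. After squaring and integrating, this yields the $1/N^2$ prefactor that multiplies the (possibly unbounded) quantity $\mathbb E\int_0^T|u^i_t|^2\mathrm dt$; it is precisely this scaling that will later allow the uniform bound \eqref{important} to be recovered in the subsequent step by absorbing the $|u^i|^2$ terms into the coercivity provided by the relaxed compensator. Without this sharp $N^{-2}$ accounting, the comparison between the $N$-agent cost and the limiting cost would break down, so the proof must avoid estimates that lose a factor of $N$ (for instance, using Cauchy–Schwarz inefficiently when handling the averaged drift).
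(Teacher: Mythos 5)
Your proposal is correct and follows essentially the paper's argument: the paper writes a single SDE for $x^{(N)}-x^0$ whose forcing already contains both $(\bar u^{(N)}-u^0)$ and $\frac{1}{N}(u^i-\bar u^i)$, whereas you split the same two contributions via $x^{(N)}-\bar x^{(N)}$ plus Lemma \ref{lem72} and a triangle inequality, which is only a cosmetic reorganization. The key ingredients — the identity $u^{(N)}-\bar u^{(N)}=\frac{1}{N}(u^i-\bar u^i)$, the bound \eqref{uN}, square-integrability of $\bar u^i$, B-D-G and Gronwall — coincide with the paper's proof, including for the second estimate.
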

\begin{proof}
  Let $\delta x^{(N)}= x^{(N)}-x^0$, recalling \eqref{pixx} and \eqref{x0}, it holds that
  \begin{equation*}
    \left\{
    \begin{aligned}
      &\mathrm{d}\delta x^{(N)}_t=\bigg[(A_t+\bar A_t)\delta x^{(N)}_t+(B_t+\bar B_t) (\bar u_t^{(N)}-u^0_t)+\frac{B_t+\bar B_t}{N}( u_t^i-\bar u_t^i)\bigg]\mathrm{d}t+ \frac{1}{N}\sum_{i=1}^N\sigma_t\mathrm{d}W_t^i\\
      &\qquad\quad +\bigg[(D_t+\bar D_t)\delta x^{(N)}_t+(F_t+\bar F_t) (\bar u_t^{(N)}-u^0_t)+\frac{F_t+\bar F_t}{N}( u_t^i-\bar u_t^i)\bigg]\mathrm{d}W_t^0 +\frac{1}{N}\sum_{i=1}^N\bar\sigma_t\mathrm{d}\overline W_t^i,\\
      &\delta x^{(N)}_0=0.
    \end{aligned}
    \right.
  \end{equation*}
  By similar arguments as Lemma \ref{lem72}, we obtain the first inequality. Similarly, we can also obtain the second inequality.
\end{proof}

Similar to Lemma \ref{lem112674}, we also have the following lemma.
\begin{lemma}\label{lemma6235577}
  \begin{equation*}\begin{aligned}
    &\big|\mathcal{J}^P_i( {u}_{\cdot}^i,\bar{u}_{\cdot}^{-i})-J^P_i( {u}_{\cdot}^i)\big|\leq \frac{C_0}{\sqrt N}+\frac{C_0}{N}\mathbb E\int_0^T|u_t^i|^2\mathrm{d}t.
  \end{aligned}\end{equation*}
\end{lemma}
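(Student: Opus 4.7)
The plan is to closely parallel the proof of Lemma \ref{lem112674}, now tracking how the quantity $a := \mathbb{E}\int_0^T|u_t^i|^2\,\mathrm{d}t$ enters at every step, since $u^i$ is an arbitrary admissible strategy rather than the specific equilibrium candidate $\bar u^i$.

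\textbf{Step 1 (Reduction).} Combining Lemma \ref{Lem35} with Lemma \ref{lemma6235555}, the additive constant $\tfrac12\langle P_0x,x\rangle$ cancels on both sides, so
$$\mathcal{J}^P_i(u^i_{\cdot},\bar u^{-i}_{\cdot})-J^P_i(u^i_{\cdot})=\mathcal{J}_i(u^i_{\cdot},\bar u^{-i}_{\cdot})-J_i(u^i_{\cdot}),$$
and it suffices to estimate the un-relaxed difference on the right, which has a transparent term-by-term structure via \eqref{cost} and \eqref{lcost}.

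\textbf{Step 2 (Term-by-term decomposition).} For every quadratic block I use the identity $\langle M\phi,\phi\rangle-\langle M\psi,\psi\rangle=\langle M(\phi-\psi),\phi-\psi\rangle+2\langle M(\phi-\psi),\psi\rangle$ (with the bilinear analogue for the state--control coupling). In every case $\phi-\psi$ involves one of the differences $x^i-X^i$, $x^{(N)}-x^0$, or $u^{(N)}-u^0$, while $\psi$ stays at the limiting level (i.e.\ $X^i-\alpha_k x^0$, $u^i-\beta_k u^0$, etc.). This splits the total difference into a sum of \emph{diagonal} contributions, each controlled by $\|\phi-\psi\|^2_{L^2}$, plus \emph{cross} contributions, each controlled by $\|\phi-\psi\|_{L^2}\|\psi\|_{L^2}$.

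\textbf{Step 3 (Key estimates).} Beyond Lemma \ref{lem81647}, the decomposition $u^{(N)}-u^0=(\bar u^{(N)}-u^0)+\tfrac{u^i-\bar u^i}{N}$ together with \eqref{uN} and the boundedness of $\mathbb{E}\int_0^T|\bar u^i_t|^2\,\mathrm{d}t$ yields
$$\mathbb{E}\int_0^T|u^{(N)}_t-u_t^0|^2\,\mathrm{d}t\leq \frac{C_0}{N}+\frac{C_0 a}{N^2}.$$
A standard Gronwall/B-D-G argument applied to SDE \eqref{lxx} gives $\mathbb{E}\sup_{0\leq t\leq T}|X^i_t|^2\leq C_0(1+a)$; and because \eqref{x0412} and Theorem \ref{thm8314} determine $(x^0,u^0)$ independently of $u^i$, we have $\mathbb{E}\sup_{0\leq t\leq T}(|x^0_t|^2+|u^0_t|^2)\leq C_0$.

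\textbf{Step 4 (Assembly).} The diagonal contributions are bounded directly by $C_0(\tfrac1N+\tfrac{a}{N^2})\leq \tfrac{C_0}{\sqrt N}+\tfrac{C_0 a}{N}$. For each cross contribution, Cauchy--Schwarz combined with Step 3 produces a product of the form $\sqrt{(\tfrac1N+\tfrac{a}{N^2})(1+a)}$; expanding this via $\sqrt{\alpha+\beta}\leq\sqrt\alpha+\sqrt\beta$ gives the three manifestly absorbable pieces $\tfrac{1}{\sqrt N}$, $\tfrac{\sqrt a}{N}\leq \tfrac{1}{2N}+\tfrac{a}{2N}$, $\tfrac{a}{N}$, plus the genuinely mixed piece $\tfrac{\sqrt a}{\sqrt N}$. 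Summing all terms and using the fact that these estimates hold uniformly for $\phi-\psi$ ranging over the three types of differences yields the claimed bound.

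\textbf{Main obstacle.} The only delicate point is the mixed-order term $\sqrt{a/N}$ coming from the cross contributions, which neither lies in $O(1/\sqrt N)$ nor in $O(a/N)$ on its own. Handling it requires a Young's inequality $2uv\leq \epsilon u^2+v^2/\epsilon$ applied \emph{before} taking the square root (directly at the level of the cross pairing $\langle M(\phi-\psi),\psi\rangle$, with $u$ and $v$ chosen so that $\epsilon$ and $\epsilon^{-1}$ depend on $N$ in a way calibrated to Lemma \ref{lem81647}), so that the constant part picks up the factor $N^{-1/2}$ while the $a$-dependent part picks up $N^{-1}$, and no residual $O(1)$ additive constant is introduced. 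This is the step that requires the most care; every other contribution is routine.
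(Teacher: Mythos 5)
Your overall route is the one the paper intends (the paper only says the proof is ``similar to Lemma \ref{lem112674}''): reduce via the equivalence Lemmas \ref{Lem35} and \ref{lemma6235555}, split each quadratic block with $\langle M\phi,\phi\rangle-\langle M\psi,\psi\rangle=\langle M(\phi-\psi),\phi-\psi\rangle+2\langle M(\phi-\psi),\psi\rangle$, and feed in Lemma \ref{lem81647} together with $\mathbb E\int_0^T|u^{(N)}_t-u^0_t|^2\mathrm{d}t\leq C_0/N+C_0a/N^2$ and $\mathbb E[\sup_t|X^i_t|^2]\leq C_0(1+a)$, where $a=\mathbb E\int_0^T|u^i_t|^2\mathrm{d}t$. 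Steps 1--3 are fine (Step 1 needs the harmless remark that the identity of Lemma \ref{Lem35} extends from $\mathscr U_d^i$ to $\mathscr U_c^i$, since it is just It\^o's formula, and the paper itself uses this extension in \eqref{unbounded}).

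The genuine gap is your ``Main obstacle'' step. The mixed cross term you isolate is of true size $\sqrt{a/N}$: it comes from pairing the $O(1/\sqrt N)$ fluctuation of $x^i-X^i$ and $x^{(N)}-x^0$ (which is present even when $u^i\equiv 0$, being driven by $\bar u^{(N)}-u^0$ and the averaged noises) against the $u^i$-driven part of $X^i$, whose $L^2$-norm is of order $\sqrt a$. No calibration of Young's inequality can convert this into $C_0/\sqrt N+C_0a/N$, because the scalar inequality $\sqrt{a/N}\leq C_0\big(N^{-1/2}+a/N\big)$ is simply false uniformly in $a$: take $a=\sqrt N$, then the left side is $N^{-1/4}$ while the right side is $O(N^{-1/2})$. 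Concretely, writing $2\langle Q(\phi-\psi),\psi\rangle\leq \epsilon|\psi|^2+C\epsilon^{-1}|\phi-\psi|^2$, the requirement that the $a$-coefficient be $O(1/N)$ forces $\epsilon\sim 1/N$, and then $C\epsilon^{-1}\cdot(C_0/N)$ contributes an $O(1)$ constant, exactly the residual you claim to avoid. So, with only the norm bounds you have established, your argument proves $|\mathcal J_i^P(u^i_\cdot,\bar u^{-i}_\cdot)-J_i^P(u^i_\cdot)|\leq C_0/\sqrt N+C_0\sqrt{a}/\sqrt N+C_0a/N$ (equivalently $C_0+C_0a/N$ after Young), not the stated bound. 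To close the gap you must either exploit structure beyond Cauchy--Schwarz (e.g. conditional independence/orthogonality of the $1/\sqrt N$ fluctuations of the other agents from agent $i$'s control, so the offending pairing is genuinely smaller), or be content with the weaker estimate containing $C_0\sqrt a/\sqrt N$ --- which, it should be said, still suffices for the paper's subsequent derivation of \eqref{important} in \eqref{unbounded}, since $C_0\sqrt a/\sqrt N\leq \tfrac{\lambda}{4}a+C_0^2/(\lambda N)$ can be absorbed by the uniform convexity; but as a proof of the lemma as stated, the absorption step you describe does not work.
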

Using Lemma \ref{lemma6235577}, we obtain
\begin{equation*}
  -\frac{C_0}{\sqrt N}-\frac{C_0}{N}\mathbb E\int_0^T|u_t^i|^2\mathrm{d}t+J^P_i( {u}_{\cdot}^i)\leq  \mathcal{J}^P_i( {u}_{\cdot}^i,\bar{u}_{\cdot}^{-i}).
\end{equation*}
According to Lemma \ref{rmk031337}, we know that the map $u^i\rightarrow J^P_{i}(u^i_{\cdot})$ is uniformly convex, which implies that $\lambda \mathbb E\int_0^T|u^i_t|^2\mathrm{d}t-C_0\leq {J}_i^P(u_{\cdot}^i )$ for some $\lambda >0$. Thus
\begin{equation}\label{unbounded}
  \Big(\lambda-\frac{C_0}{N}\Big)\mathbb E\int_0^T|u_t^i|^2\mathrm{d}t-C_0\leq  \mathcal{J}^P_i( {u}_{\cdot}^i,\bar{u}_{\cdot}^{-i})\leq\mathcal{J}_i^P(\bar{u}_{\cdot}^i,\bar{u}_{\cdot}^{-i})\leq J_i^P(\bar{u}_{\cdot}^i)+O\Big(\frac{1}{\sqrt N}\Big),
\end{equation}
then we obtain that for sufficiently large $N$ ($N>\frac{C_0}{\lambda}$), it holds \eqref{important}. Then we have the following estimates.
\begin{lemma}\label{lemma6235565}
  \begin{equation*}\begin{aligned}
    \mathbb{E}\bigg[\sup_{0\leq t\leq T}| x^{(N)}_t-x_t^0|^2\bigg]&=O\Big(\frac{1}{N}\Big),\qquad
    \sup_{1\leq i\leq N}\mathbb{E}\bigg[\sup_{0\leq t\leq T}| x_t^i- X_t^i|^2\bigg]=O\Big(\frac{1}{N}\Big),\\
    |\mathcal{J}_i(u_{\cdot}^i,\bar{u}_{\cdot}^{-i})-J_i(u_{\cdot}^i)|&=O\Big(\frac{1}{\sqrt N}\Big),\quad 1\leq i\leq N.
  \end{aligned}\end{equation*}
\end{lemma}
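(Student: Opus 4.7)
The plan is to combine the conditional estimates of Lemma \ref{lem81647} and Lemma \ref{lemma6235577} with the uniform $L^2$-bound \eqref{important}, which has just been established via the relaxed compensator and the uniform convexity from Lemma \ref{rmk031337}. The key insight is that inequality \eqref{unbounded} already bootstraps the a priori bound on $\mathbb{E}\int_0^T|u^i_t|^2\mathrm{d}t$, and once this quantity is controlled by a constant $C_0$ independent of $N$, the $N^{-1}$-dependent terms in Lemma \ref{lem81647} collapse to the desired rate.

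First, I would invoke \eqref{important}: for any alternative $u^i \in \mathscr{U}_c^i$ with $\mathcal{J}_i(u^i_\cdot,\bar u^{-i}_\cdot)\leq \mathcal{J}_i(\bar u^i_\cdot,\bar u^{-i}_\cdot)$, taking $N$ sufficiently large (specifically $N>C_0/\lambda$ as in \eqref{unbounded}) yields $\mathbb{E}\int_0^T|u^i_t|^2\mathrm{d}t\leq C_0$. Substituting this bound into the two inequalities from Lemma \ref{lem81647}, namely
\begin{equation*}
\mathbb{E}\Bigl[\sup_{0\leq t\leq T}|x^{(N)}_t-x^0_t|^2\Bigr]\leq \frac{C_0}{N}+\frac{C_0}{N^2}\mathbb{E}\int_0^T|u^i_t|^2\mathrm{d}t,
\end{equation*}
and the analogous one for $|x^i_t - X^i_t|^2$, immediately gives both $O(1/N)$ estimates since the second term becomes $O(1/N^2)$.

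For the cost functional estimate, I would apply Lemma \ref{lemma6235577} together with Lemmas \ref{Lem35} and \ref{lemma6235555} in the form $\mathcal{J}_i(u^i_\cdot,\bar u^{-i}_\cdot)-J_i(u^i_\cdot) = \mathcal{J}^P_i(u^i_\cdot,\bar u^{-i}_\cdot)-J_i^P(u^i_\cdot)$ (the $\frac{1}{2}\langle P_0 x,x\rangle$ terms cancel). Then Lemma \ref{lemma6235577} yields $|\mathcal{J}_i(u^i_\cdot,\bar u^{-i}_\cdot)-J_i(u^i_\cdot)|\leq \frac{C_0}{\sqrt{N}}+\frac{C_0}{N}\mathbb{E}\int_0^T|u^i_t|^2\mathrm{d}t$, and invoking \eqref{important} once more converts the second term into $O(1/N) = o(1/\sqrt{N})$. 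Thus the overall rate is $O(1/\sqrt{N})$.

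The subtle point — and where the main obstacle lies — is the logical ordering: the bound \eqref{important} is derived only for alternative strategies that do not already beat $\bar u^i$, so one must first justify that restricting attention to such strategies is sufficient for verifying the $\varepsilon$-Nash property. Once this is clarified, and provided $N$ is taken large enough so that $\lambda - C_0/N > \lambda/2 > 0$ in \eqref{unbounded}, the rest of the argument proceeds by direct substitution. No new estimate is needed; everything reduces to carefully tracking how the a priori $L^2$-bound on the perturbation removes the $N$-dependence that was left explicit in Lemmas \ref{lem81647} and \ref{lemma6235577}.
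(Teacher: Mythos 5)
Your proposal is correct and follows essentially the same route the paper intends: the paper leaves this lemma's proof implicit, as it follows directly from the uniform bound \eqref{important} (established via \eqref{unbounded} for alternative strategies with $\mathcal{J}_i(u^i_{\cdot},\bar u^{-i}_{\cdot})\leq \mathcal{J}_i(\bar u^i_{\cdot},\bar u^{-i}_{\cdot})$) substituted into Lemmas \ref{lem81647} and \ref{lemma6235577}, together with the cancellation of the $\tfrac{1}{2}\langle P_0x,x\rangle$ terms from Lemmas \ref{Lem35} and \ref{lemma6235555}. Your observation about restricting to non-improving alternative strategies matches the paper's explicit reduction before \eqref{important}.
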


Based on above lemmas, now we give the following main result of this section.
\begin{theorem}\label{thm410}
  The set of decentralized strategies $\bar{u}=(\bar{u}^1,\bar{u}^2,\cdots, \bar{u}^N)$, where $\bar{u}^i$ is given by \eqref{optimalcontrol}, is an $\varepsilon=O(\frac{1}{\sqrt N})$-Nash equilibrium.
\end{theorem}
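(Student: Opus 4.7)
The plan is to chain together the approximation lemmas already established, with the decisive middle link being the optimality of $\bar{u}^i$ in the limiting decentralized problem. For any alternative strategy $u^i \in \mathscr{U}_c^i$, I would first reduce without loss of generality to the case $\mathcal{J}_i(u^i_\cdot, \bar{u}^{-i}_\cdot) \leq \mathcal{J}_i(\bar{u}^i_\cdot, \bar{u}^{-i}_\cdot)$, since otherwise the desired inequality is trivial.

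The first genuine step is to extract from this reduction the $L^2$-boundedness \eqref{important} of the alternative strategy. Without such a bound, the approximation estimates in Lemma \ref{lem81647} and Lemma \ref{lemma6235577} carry residual terms of order $\frac{1}{N}\mathbb{E}\int_0^T|u^i_t|^2\,\mathrm{d}t$ that cannot be controlled. The derivation in \eqref{unbounded} would be the linchpin: it combines (i) the cost equivalence $\mathcal{J}_i \leftrightarrow \mathcal{J}_i^P$ from Lemma \ref{lemma6235555}, (ii) the quantitative approximation $|\mathcal{J}_i^P - J_i^P| \leq C_0/\sqrt{N} + (C_0/N)\|u^i\|_{L^2}^2$ from Lemma \ref{lemma6235577}, and crucially (iii) the uniform convexity of $u^i \mapsto J_i^P(u^i_\cdot)$ supplied by the relaxed compensator via Lemma \ref{rmk031337}, which produces a coercive lower bound $\lambda\|u^i\|_{L^2}^2 - C_0 \leq J_i^P(u^i_\cdot)$. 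Choosing $N$ large enough so that $\lambda - C_0/N > 0$ absorbs the quadratic term and yields \eqref{important}.

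Once \eqref{important} is in hand, Lemma \ref{lemma6235565} upgrades Lemma \ref{lem81647} and Lemma \ref{lemma6235577} to the clean bound $|\mathcal{J}_i(u^i_\cdot, \bar{u}^{-i}_\cdot) - J_i(u^i_\cdot)| = O(1/\sqrt{N})$, with no residual dependence on $\|u^i\|_{L^2}$. Combining this with Lemma \ref{lem112674} closes the argument through the chain
\begin{equation*}
    \mathcal{J}_i(\bar{u}^i_\cdot, \bar{u}^{-i}_\cdot) = J_i(\bar{u}^i_\cdot) + O(1/\sqrt{N}) \leq J_i(u^i_\cdot) + O(1/\sqrt{N}) = \mathcal{J}_i(u^i_\cdot, \bar{u}^{-i}_\cdot) + O(1/\sqrt{N}),
\end{equation*}
where the middle inequality invokes the optimality of $\bar{u}^i$ for Problem (MFD) established in Proposition \ref{0309311}.

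The hard part is clearly the boundedness estimate \eqref{important}: the indefiniteness of $(Q, R, L_T)$ rules out any direct coercive control of $\mathcal{J}_i^P$ itself, and the explicit form of $\mathcal{J}_i^P(u^i_\cdot, \bar{u}^{-i}_\cdot)$ does not reduce to a completed-square form in $u^i$ alone. The workaround is precisely to detour through the uniformly convex limiting functional $J_i^P$, whose uniform convexity is not \emph{assumed} but \emph{derived} from the existence of a relaxed compensator. A secondary subtlety is that $u^i$ ranges over the centralized class $\mathscr{U}_c^i$ while $\bar{u}^i$ is only optimal in the decentralized class $\mathscr{U}_d^i$; this is reconciled by observing that in the limiting dynamics the additional information carried by $\mathscr{F}^y$ beyond $\mathscr{F}^{Y^i}$ is independent of the noise driving $X^i$, so projecting onto $\mathscr{F}^{Y^i}$ in the convex surrogate $J_i^P$ does not increase the cost.
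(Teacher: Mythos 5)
Your proposal follows the paper's own route essentially step for step: restrict to alternatives with $\mathcal{J}_i(u^i_\cdot,\bar u^{-i}_\cdot)\leq \mathcal{J}_i(\bar u^i_\cdot,\bar u^{-i}_\cdot)$, obtain the key bound \eqref{important} through the chain \eqref{unbounded} (cost equivalence of Lemma \ref{lemma6235555}, the estimate of Lemma \ref{lemma6235577}, and the uniform convexity of $J_i^P$ supplied by the relaxed compensator via Lemma \ref{rmk031337}), then upgrade to Lemma \ref{lemma6235565} and close with Lemma \ref{lem112674} and the optimality of $\bar u^i$ for Problem (MFD). This is correct and matches the paper's argument; your closing remark on the centralized-versus-decentralized admissible classes is a reasonable additional observation that the paper leaves implicit.
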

\begin{proof}
  According to Lemma \ref{lem112674} and Lemma \ref{lemma6235565}, we have that, for $1\leq i\leq N$,
  \begin{equation*}
    \begin{aligned}
      \mathcal{J}_i(\bar{u}_{\cdot}^i,\bar{u}_{\cdot}^{-i})=J_i(\bar{u}_{\cdot}^i)+O\Big(\frac{1}{\sqrt N}\Big)
      \leq J_i(u_{\cdot}^i)+O\Big(\frac{1}{\sqrt N}\Big)
      =\mathcal{J}_{i}(u_{\cdot}^i,\bar{u}_{\cdot}^{-i}) +O\Big(\frac{1}{\sqrt N}\Big).
    \end{aligned}
  \end{equation*}
  Therefore, the result holds with $\varepsilon=O(\frac{1}{\sqrt N})$.
\end{proof}

\section{Application}\label{sec:application}

 In this section, we try to apply our theoretical results to solve the Problem (EX) introduced in Section \ref{motivation}. Noticing that the weighting matrix of the control in cost functional \eqref{cost1} is $0$, thus Condition (PD) {does not hold}. According to Proposition \ref{Thm34}, we can obtain the Hamiltonian system,
\begin{equation}\label{3359}
  \left\{
  \begin{aligned}
    &\mathrm{d}\bar x_t^i=(r_t\bar x_t^i+B_t\bar u_t^i-b_t)\mathrm{d}t+\sigma_t\bar u_t^i\mathrm{d}W_t^0+c_t\mathrm{d}W_t^i+\bar c_t\mathrm{d}\overline W_t^i,\quad \bar x_0^i=x_0,\\
    &\mathrm{d}\varphi_t^i=-r_t\varphi_t^i\mathrm{d}t+\eta_t^i\mathrm{d}W_t^0+\zeta_t^i\mathrm{d}W_t^i+\vartheta_t^i\mathrm{d}\overline W_t^i,\quad \varphi_T^i=\gamma (\bar x_T^i-x_T^0)-\frac{1}{2},\\
    &B_t\hat{\varphi}_t^i+\sigma_t\hat\eta_t^i=0.
  \end{aligned}
  \right.
\end{equation}
In \eqref{3359}, the SDE and BSDE are coupled through the third equation, which makes it difficult for us to obtain the well-posedness of \eqref{3359}.
 Motivated by Proposition \ref{0309311}, in order to solve this indefinite problem, we need to introduce the relaxed compensator $P$. By virtue of Proposition \ref{prop310}, we know that the relaxed compensator $P$ satisfies the following inequality,
\begin{equation}\label{030860}
  \begin{aligned}
    &\dot{P}_t+2r_tP_t+\frac{B^2_t}{\sigma_t^2}P_t\geq 0,\quad
    P_T\leq \gamma,\quad \sigma^2_tP_t>0,\quad t\in [0,T].
  \end{aligned}
\end{equation}
It is easy to check that $\gamma \exp(\int_t^T(2r_s-\frac{B_s^2}{\sigma_s^2})\mathrm{d}s)$ is a solution of {Inequality} \eqref{030860}. In fact, this relaxed compensator is also the solution of the first equation in \eqref{92362}. Then we introduce the following auxiliary FBSDE
\begin{equation}\label{8257}
  \left\{
  \begin{aligned}
    &\mathrm{d}\bar x_t^{i,P}=(r_t\bar x_t^{i,P}+B_t\bar u_t^{i,P}-b_t)\mathrm{d}t+ \sigma_t\bar u_t^{i,P}\mathrm{d}W_t^0+c_t\mathrm{d}W_t^i +\bar c_t\mathrm{d}\overline W_t^i,\\
    &\mathrm{d}\varphi_t^{i,P}=-(r_t\varphi_t^{i,P}+Q^P_t\bar{x}_t^{i,P}
    +B_tP_t\bar u_t^{i,P}-b_tP_t)\mathrm{d}t+\eta_t^{i,P}\mathrm{d}W_t^0+\zeta_t^{i,P}\mathrm{d} W_t^i+\vartheta_t^{i,P}\mathrm{d}\overline W_t^i,\\
    &\sigma^2_tP_t\bar u_t^{i,P}+B_t\hat{\varphi}_t^{i,P}+\sigma_t\hat{\eta}_t^{i,P}+B_tP_t\hat{\bar x}_t^{i,P}=0,\\
    &\bar{x}_0^{i,P}=x_0,\quad \varphi_T^{i,P}=(L_T-P_T)\bar{x}_T^{i,P}-\gamma x_T^0-\frac{1}{2},
  \end{aligned}
  \right.
\end{equation}
where $Q^P=\dot{P}+2rP$. One can easily check that FBSDE \eqref{8257} satisfies the monotonicity condition, then it admits a unique solution, see \cite{CDW2024}. Thus FBSDE \eqref{3359} admits a unique solution  by Proposition \ref{0309311}. Next, in order to obtain the feedback representation of decentralized strategies, we introduce the following equations
\begin{equation}\label{92362}
  \left\{
  \begin{aligned}
    &\dot{\Pi}_t+2r_t\Pi_t-\frac{B^2_t}{\sigma^2_t}\Pi_t= 0,\quad
    \Pi_T= \gamma,\quad \sigma_t^2\Pi_t >0,\\
    &\dot{\Sigma}_t+2r_t\Sigma_t-\frac{B^2_t}{\sigma^2_t\Pi_t}\Sigma_t^2=0,\quad
        \Sigma_T=0,\\
        &\dot{\rho}_t+r_t\rho_t-\frac{B^2_t\Sigma_t\rho_t}{\sigma^2_t\Pi_t}-b_t\Sigma_t=0,\quad
    \rho_T=-\frac{1}{2}.%\qquad t\in [0,T],
  \end{aligned}
  \right.
\end{equation}
By simple calculation, we know that $(\gamma e^{\int_t^T(2r_s-\frac{B_s^2}{\sigma_s^2})\mathrm{d}s}, 0, -\frac{1}{2}e^{\int_t^Tr_s\mathrm{d}s})$ is a unique solution of ODEs \eqref{92362}. Then the limiting process $x^0$ is given by
\begin{equation*}
  \begin{aligned}
    &\mathrm{d}x_t^0=\Big\{r_tx_t^0-\frac{B^2_t\rho_t}{\sigma_t^2\Pi_t}-b_t\Big\}\mathrm{d}t-\frac{B_t\rho_t}{\sigma_t\Pi_t}\mathrm{d}W_t^0,\quad x_0^0=x_0.
  \end{aligned}
\end{equation*}

Therefore, we obtain the feedback representation of decentralized strategies as follows,
\begin{proposition}
    The optimal decentralized strategy of Problem (EX) has the following feedback representation,
    \begin{equation}\label{strategy}
        \bar u_t^i=-\frac{B_t(\hat{\bar x}_t^i-x_t^0)}{\sigma_t^2}-\frac{B_t\rho_t}{\sigma_t^2\Pi_t},
    \end{equation}
    where $(\Pi,\rho)$ is given by \eqref{92362} and $\hat{\bar x}^i$ satisfies
    \begin{equation*}
        \left\{
        \begin{aligned}
            &\mathrm{d}\bar x_t^i=\Big\{r_t\bar x_t^i-\frac{B^2}{\sigma^2}(\hat{\bar x}_t^i-x_t^0)-\frac{B^2_t\rho_t}{\sigma_t^2\Pi_t}-b_t\Big\}\mathrm{d}t-\Big\{\frac{B_t}{\sigma_t}(\hat{\bar x}_t^i-x_t^0)+\frac{B_t\rho_t}{\sigma_t\Pi_t}\Big\}\mathrm{d}W_t^0+c_t\mathrm{d}W_t^i+\bar c_t\mathrm{d}\overline W_t^i,\\
            &\bar x_0^i=x_0.
        \end{aligned}
        \right.
    \end{equation*}
\end{proposition}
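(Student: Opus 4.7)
The plan is to recognize Problem (EX) as a specific instance of the general mean-field framework and invoke Theorem \ref{thm8314}, paying attention to one technical subtlety particular to this example. First I would identify the coefficients of Problem (EX): on the state side, $A = r$, $B = B$, $D = 0$, $F = \sigma$, and the mean-field coefficients $\bar A, \bar B, \bar D, \bar F$ all vanish; the general drift $b_t$ maps to $-b_t$ and $\bar b = 0$. On the cost side, $Q = R = S = 0$, $L_T = \gamma$, $l_T = -\tfrac12$, $\alpha_3 = 1$, while $\alpha_1, \alpha_2, \beta_1, \beta_2$ are immaterial because they multiply vanishing quantities. Since $R \equiv 0$, Condition (PD) fails, but the function $P_t = \gamma\exp\bigl(\int_t^T(2r_s - B_s^2/\sigma_s^2)\mathrm{d}s\bigr)$ displayed in the excerpt satisfies inequality \eqref{030860} and hence Condition (RC); therefore Theorem \ref{thm8314} applies.

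Next I would substitute these coefficients into \eqref{Riccati}, \eqref{Riccati2}, and \eqref{BSDE0}; after simplification they collapse exactly into the system \eqref{92362}, and the triple $(\Pi_t, \Sigma_t, \rho_t) = \bigl(\gamma e^{\int_t^T (2r_s - B_s^2/\sigma_s^2)\mathrm{d}s},\ 0,\ -\tfrac12 e^{\int_t^T r_s\mathrm{d}s}\bigr)$ solves it by direct verification. With $\mathcal R_t = \sigma_t^2\Pi_t$ and $\widetilde\Pi_t = B_t\Pi_t$ at hand, the first gain in \eqref{feed230129} is $\mathcal R_t^{-1}\widetilde\Pi_t^\top = B_t/\sigma_t^2$, which immediately produces the term $-(B_t/\sigma_t^2)(\hat{\bar x}_t^i - x_t^0)$ of \eqref{strategy}.

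The subtle point is the inhomogeneous second term $-\widetilde{\mathcal R}_t^{-1}(\bar\Sigma_t^\top x_t^0 + \tilde\rho_t)$ of \eqref{feed230129}: since $\Sigma \equiv 0$ one gets $\widetilde{\mathcal R}_t = \sigma_t^2\Sigma_t \equiv 0$, so a literal substitution is ill-defined. I would resolve this by returning to the relaxed Hamiltonian system \eqref{8257}, which is well-posed under Condition (RC) by Proposition \ref{0309311}. Since the transformation \eqref{transform} with $P = \Pi$ removes the entire state-linear part of $\varphi^i$, I would make the affine ansatz $\hat\varphi^{i,P}_t = g_t x_t^0 + h_t$ for deterministic functions $g, h$, use the $x^0$-dynamics displayed just below \eqref{92362} together with the filtered BSDE in \eqref{8257} to extract linear ODEs for $g$ and $h$, and solve them explicitly. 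Plugging the resulting $\hat\varphi^{i,P}$ (together with the matching $\hat\eta^{i,P}$ read off from the martingale part) into the stationary condition of \eqref{8257} yields precisely $-B_t\rho_t/(\sigma_t^2\Pi_t)$, completing \eqref{strategy}.

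Finally, since $\bar u^i = \bar u^{i,P}$ by Proposition \ref{0309311}, the feedback \eqref{strategy} is established; substituting it into the state equation of \eqref{3359} and rearranging then delivers the closed-loop SDE stated in the Proposition. The main obstacle, as foreshadowed, is the singularity $\widetilde{\mathcal R} \equiv 0$, which makes a direct appeal to \eqref{feed230129} illegitimate. The cleanest workaround is the ansatz-and-matching argument on the relaxed Hamiltonian \eqref{8257}, whose well-posedness and filtering structure are already available from the earlier parts of the Application section.
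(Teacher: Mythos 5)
Your overall route is the paper's own: read off the coefficients ($A=r$, $F=\sigma$, $D=\bar A=\bar B=\bar D=\bar F=0$, $Q=R=S=0$, $L_T=\gamma$, $\alpha_3=1$, $l_T=-\tfrac12$), verify Condition (RC) through \eqref{030860} with $P_t=\gamma\exp\bigl(\int_t^T(2r_s-B_s^2/\sigma_s^2)\mathrm{d}s\bigr)$, specialize the Riccati system, and invoke Theorem \ref{thm8314}; the first gain $\mathcal R_t^{-1}\widetilde\Pi_t^{\top}=B_t/\sigma_t^2$ is computed correctly. You are also right, and in fact more careful than the text, to observe that with $R\equiv0$ and $\Sigma\equiv0$ one has $\widetilde{\mathcal R}_t=(1-\beta_1)R_t+F_t^{\top}\Sigma_t(F_t+\bar F_t)\equiv0$, so the second term of \eqref{feed230129}, the consistency relation \eqref{u0412}, and the requirement $\widetilde{\mathcal R}+\widetilde{\mathcal R}^{\top}\gg0$ in \eqref{Riccati2} cannot be invoked literally (note in passing that \eqref{92362} is not the verbatim specialization of \eqref{Riccati2}--\eqref{BSDE0} either: the general formulas would place $\sigma_t^2\Sigma_t$, not $\sigma_t^2\Pi_t$, in the denominators). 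So the entire content of the Proposition beyond the first gain sits in the inhomogeneous term $-B_t\rho_t/(\sigma_t^2\Pi_t)$, which is precisely the part that does not follow from Theorem \ref{thm8314}.

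This is where your proposal has a genuine gap: the ansatz-and-matching step on \eqref{8257} (``$\hat\varphi^{i,P}_t=g_tx_t^0+h_t$, extract ODEs for $g,h$, and this yields precisely $-B_t\rho_t/(\sigma_t^2\Pi_t)$'') is asserted, not carried out, and it cannot be carried out as described. Indeed, suppose a solution of \eqref{8257} (equivalently, via \eqref{transform}, of \eqref{3359}) also satisfies the consistency \eqref{px0}, i.e. $x_t^0=\mathbb E[\bar x_t^i|\mathscr F_t^{\theta}]$. Projecting the stationarity relation $B_t\hat\varphi_t^i+\sigma_t\hat\eta_t^i=0$ onto $\mathscr F_t^{\theta}\subseteq\mathscr F_t^{\bar Y^i}$ gives $B_t\check\varphi_t^i+\sigma_t\check\eta_t^i=0$. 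On the other hand, from the adjoint equation in \eqref{3359}, $e^{\int_0^t r_s\mathrm{d}s}\varphi_t^i$ is a martingale with terminal value $e^{\int_0^T r_s\mathrm{d}s}\bigl(\gamma(\bar x_T^i-x_T^0)-\tfrac12\bigr)$, and consistency at time $T$ plus the tower property yield $\check\varphi_t^i=-\tfrac12 e^{\int_t^T r_s\mathrm{d}s}=\rho_t$; projecting the martingale part onto $\mathscr F^{\theta}=\mathscr F^{W^0}$ then forces $\check\eta^i\equiv0$ (the conditional expectation of the $W^0$-integral is the integral of the projected integrand, and the $W^i,\overline W^i$-integrals project to zero). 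Combining, $B_t\rho_t=0$, which is false since $B_t>0$ and $\rho_t\neq0$. Hence no choice of $g,h$ can satisfy simultaneously the filtered stationarity of \eqref{8257} and the consistency condition, so the matching argument you sketch cannot close, and \eqref{strategy} is not derived. (The same tension shows up if you instead freeze $x^0$ to be the process displayed below \eqref{92362} and solve the resulting control problem by completion of squares: the inhomogeneous term one obtains involves $-B_t(k_t+\rho_t)/(\sigma_t^2\Pi_t)$ with $k_t=-\tfrac12 e^{\int_t^T(r_s-B_s^2/\sigma_s^2)\mathrm{d}s}\neq0$, not $-B_t\rho_t/(\sigma_t^2\Pi_t)$, and consistency then fails.) To complete a proof you would have to either supply an argument that genuinely resolves this degeneracy or make precise a modified sense in which \eqref{strategy} is an equilibrium; as written, the key second term is taken on faith at exactly the point where the general theorem is inapplicable.
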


Finally, a numerical example is given to illustrate the effectiveness of the proposed decentralized strategy. We assume that this LP system has $5000$ agents, let $T=1, x=2, r=0.06, \mu=0.15, b=0.06,  \sigma=0.25, c=0.5, \tilde\sigma=1, \check b=0.05, \check{\sigma}=1, \gamma=0.6, I=\tilde b=\check b=0$.  We show the trajectories of $(\bar x^{(N)},\bar u^{(N)})$ and $(x^0,u^0)$ in Fig~\ref{3} and Fig~\ref{4}. It can be seen that $(\bar x^{(N)},\bar u^{(N)})$ and $(x^0,u^0)$ coincide well, which illustrates the consistency of mean-field approximations.

\begin{figure}[htbp]
  \centering
  \begin{minipage}[t]{0.48\textwidth}
    \centering
    \includegraphics[width=\textwidth]{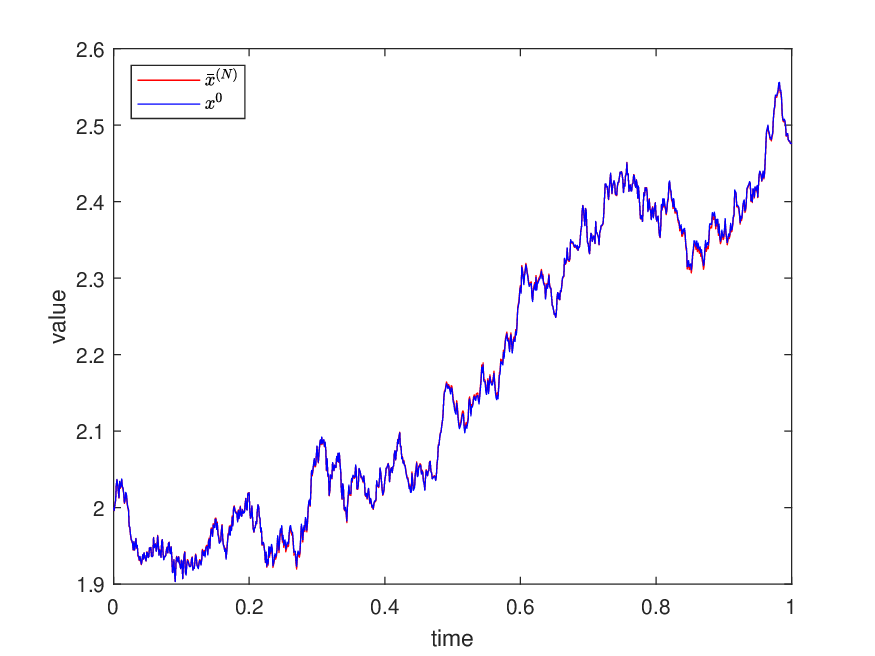}
    \caption{The trajectories of $\bar x^{(N)}$ and $x^0$.}
    \label{3}
  \end{minipage}%
  \hfill
  \begin{minipage}[t]{0.48\textwidth}
    \centering
    \includegraphics[width=\textwidth]{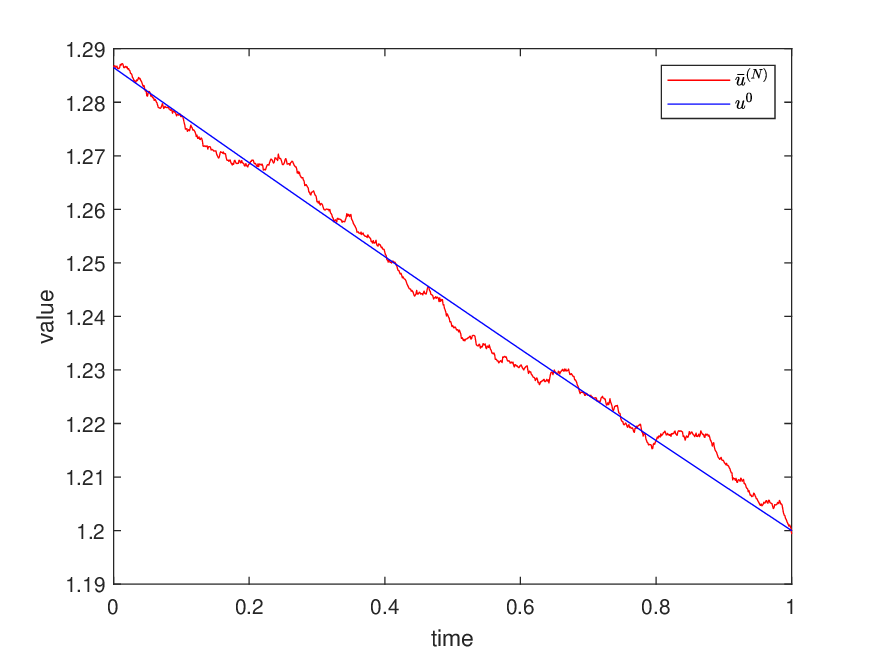}
    \caption{The trajectories of $\bar u^{(N)}$ and $u^0$.}
    \label{4}
  \end{minipage}
\end{figure}

\section{Conclusion}\label{sec:Conclusion}

In this paper, we investigate an indefinite MFG problem for the LP system with common noise, where both the state-average $x^{(N)}$ and control-average $u^{(N)}$ are involved. In our model, we allow the control variable to enter the diffusion term of the state, and the dynamic of each agent's state cannot be directly observed, but can be observed by {an individual} observation and a public observation. Furthermore, the weight matrices in the cost functional are indefinite, which can be zero or even negative. By applying the backward separation approach, we overcome the cyclic dependence between the control strategy and observation, and then obtain the optimal decentralized strategies using the Hamiltonian approach through FBSDE. To ensure the well-posedness of FBSDE \eqref{Hamiltonian}, which does not satisfy the monotonicity condition, we introduce the method of relaxed compensator. Then we provide the related CC system. In virtue of Riccati equation, we get the feedback representation of the optimal decentralized strategies and the well-posedness of Riccati equation by relaxed compensator. We creatively introduce the idea of equivalent cost functional into the original problem in LP system, then a class of equivalent cost functional have been obtained. Furthermore, we get an important result (see \eqref{important}) which is a key estimate to proving the $\varepsilon$-Nash equilibrium. Moreover, we solve a mean-variance portfolio selection problem to demonstrate the significance of our results.

\appendix
\section{Proof of Lemma \ref{lemma332}.}\label{Appendix A}

  From Definition \ref{def2131}, we have $\mathscr{U}_{d}^{i}\subseteq\mathscr{U}_{d}^{i,0}$, thus
  \begin{equation*}
    \inf_{\tilde{u}^i\in\mathscr{U}_{d}^{i}}J_i(\tilde{u}^i_{\cdot})\geq \inf_{u^i\in \mathscr{U}_{d}^{i,0}}J_i(u^i_{\cdot}).
  \end{equation*}
  Next, we prove the converse inequality.\\
  \noindent\textbf{Step 1}: $\mathscr{U}_{d}^{i}$ is dense in $\mathscr{U}_{d}^{i,0}$ with the norm of $L^2_{\mathscr{F}^{Y^{i,0}}}([0,T]; \mathbb{R}^m)$.

  For any $u^i\in \mathscr{U}_{d}^{i,0}$, define
  \begin{equation*}
     u_{n,t}^{i}=\left\{
      \begin{aligned}
        u_0,\qquad\qquad\qquad& \quad\text{ for } 0\leq t\leq \delta_n,\\
        \frac{1}{\delta_n}\int_{(k-1)\delta_n}^{k\delta_n}u_s^i\mathrm{d}s, &\quad \text{ for } k\delta_n<t\leq (k+1)\delta_n,
      \end{aligned}
    \right.
  \end{equation*}
  where $u_0\in \mathbb{R}^m$, $k, n$ are natural numbers, $1\leq k\leq n-1$, and $\delta_n=\frac{T}{n}$. Then $u_{n,t}^{i}$ is $\mathscr{F}_{k\delta_n}^{Y^{i,0}}$-adapted for any $k\delta_n<t\leq (k+1)\delta_n$, (we can use the classical progressively measurable modification of $u^i_{n}$ if necessary),
  and for any $n$
  \begin{equation}\label{star}
    \sup_{0\leq t\leq T} |u_{n,t}^{i}|\leq |u_0|+\sup_{0\leq t\leq T}|u_t^i|.
  \end{equation}
  Thus, $u_{n}^{i}\in\mathscr{U}_{d}^{i,0}$. Let $X_{n}^{i}$ and $Y^{i}_{n}$ be trajectories of \eqref{lxx} and \eqref{lYY} corresponding to $u^i_{n}$. For any $0\leq t\leq \delta_n$, we know that $u_{n,t}^{i}=u_0\in \mathcal U_{d}^{i}$. By virtue of Lemma \ref{lemma32}, we have $\mathscr{F}_{t}^{Y^{i,0}}=\mathscr{F}_{t}^{Y_n^{i}}$, $0\leq t\leq \delta_n$.  Next, for any $\delta_n < t\leq 2\delta_n$, we know that $u_{n,t}^{i}$ is $ \mathscr{F}_{\delta_n}^{Y^{i,0}}$ measurable. Since $ \mathscr{F}_{\delta_n}^{Y^{i,0}}=\mathscr{F}_{\delta_n}^{Y_n^{i}}$, then $u_{n,t}^{i}$ is $\mathscr{F}_{\delta_n}^{Y_n^{i}}$ measurable, thus we also know $u^i_{n,t}\mathbb I_{(\delta_n,2\delta_n]}(t)\in \mathcal U_{d}^{i}$ and $\mathscr{F}_{t}^{Y^{i,0}}=\mathscr{F}_{t}^{Y_n^{i}}$, $\delta_n < t\leq 2\delta_n$.   Similarly, step by step, for any $k\delta_n < t\leq (k+1)\delta_n$, we have $u_{n,t}^{i}$ is $ \mathscr{F}_{k\delta_n}^{Y^{i,0}}= \mathscr{F}_{k\delta_n}^{Y_n^{i}}$ measurable, and $\mathscr{F}_{t}^{Y^{i,0}}=\mathscr{F}_{t}^{Y_n^{i}}$, $k\delta_n < t\leq (k+1)\delta_n$.   Therefore, we have $\mathscr{F}_{t}^{Y^{i,0}}=\mathscr{F}_{t}^{Y_n^{i}}$ and $u^{i}_{n,t}$ is adapted to $\mathscr{F}_{t}^{Y^{i,0}}$ and $\mathscr{F}_{t}^{Y_n^{i}}$, which means that $u_{n}^{i}\in \mathscr{U}_{d}^{i}$.   Moreover, for each fixed $\omega$, $u_{n,t}^{i}\rightarrow u_t^i$ for almost every $t\in [0,T]$, when $n\rightarrow +\infty$.  Using \eqref{star}, by dominate convergence theorem, we derive $u_{n}^{i}\rightarrow u^i$ as $n\rightarrow +\infty$ in $L^2_{\mathscr{F}^{Y^{i,0}}}([0,T];\mathbb{R}^m)$, i.e., $\mathscr{U}_{d}^{i}$ is dense in $\mathscr{U}_{d}^{i,0}$.

  \noindent\textbf{Step 2}: $\lim_{n\rightarrow +\infty} J_i(u_{n,\cdot}^{i})=J_i(u_{\cdot}^i)$, where $u_{\cdot}^i, u_{n,\cdot}^{i}$ and $X^{i}_{n,\cdot}$ are defined as in Step 1. Noticing $\langle Qx,x\rangle-\langle Qy,y\rangle=\langle Q(x-y),x-y\rangle+2\langle Q(x-y),y\rangle$, then we have
  \begin{equation*}
    \begin{aligned}
      &2|J_i(u_{n,\cdot}^{i})\!-\!J_i(u^i_{\cdot})|\!\leq\!  C\Big\{\mathbb E\int_0^T\big|X_{n,t}^i\!-\!X_t^i\big|^2\mathrm{d}t\!+\!\mathbb E \int_0^T\big|u_{n,t}^i\!-\!u_t^i\big|^2\mathrm{d}t\\
      &\ \!+\!\Big[\Big(\mathbb E\int_0^T\big|X_t^i\!-\!\alpha_1x_t^0\!+\!1\big|^2\mathrm{d}t\Big)^{\frac{1}{2}}\!+\!\Big(\mathbb{E}\int_0^T|u_{n,t}^i\!-\!\beta_2 u_t^0|^2\mathrm{d}t\Big)^{\frac{1}{2}}\Big]\Big[\mathbb{E} \int_0^T \big|X_{n,t}^{i}\!-\!X_t^i\big|^2\mathrm{d}t \Big]^{\frac{1}{2}}\\
      &\  \!+\!\Big[\Big(\mathbb{E} \int_0^T|u_t^i\!-\!\beta_1 u_t^0\!+\!1|^2\mathrm{d}t \Big)^{\frac{1} {2}}\!+\!\Big(\mathbb{E}\int_0^T|X^{i}_{t}\!-\!\alpha_2x_t^0|^2\mathrm{d}t\Big)^{\frac{1}{2}}\Big] \Big[\mathbb{E} \int_0^T |u_{n,t}^{i}\!-\!u_t^i|^2\mathrm{d}t \Big]^{\frac{1}{2}} \\
      &\ \!+\!\mathbb E\big|X_{n,T}^i\!-\!X_T^i\big|^2\!+\!\Big[\mathbb{E}\big|X_T^i\!-\!\alpha_3x_T^0 \!+\!1\big|^2\Big]^{\frac{1}{2}} \Big[\mathbb{E}|X_{n,T}^{i}\!-\!X_T^i|^2\Big]^{\frac{1}{2}}\Big\}.
    \end{aligned}
  \end{equation*}
  By the standard estimation of SDE, we obtain that $J_i(u_{n,\cdot}^{i})\rightarrow J_i(u_{\cdot}^i)$ as $n \rightarrow +\infty$.

  \noindent\textbf{Step 3}: $\inf_{\tilde u^i\in\mathscr{U}_{d}^{i}}J_i(\tilde u_{\cdot}^i)\leq \inf_{u^i\in \mathscr{U}_{d}^{i,0}}J_i(u_{\cdot}^i)$.

  Since $u_{n,\cdot}^{i}\in\mathscr{U}_{d}^{i}$, we have
  $
    \inf_{\tilde u^i\in\mathscr{U}_{d}^{i}}J(\tilde u_{\cdot}^i)\leq J(u_{n,\cdot}^{i})$. By sending $n\rightarrow \infty$ and noticing Step 2, we have $\inf_{\tilde u^i\in\mathscr{U}_{d}^{i}}J_i(\tilde u_{\cdot}^i)\leq J_i(u_{\cdot}^i)$. Due to the arbitrariness of $u^i$, the desired inequality holds.

\section{Proof of Lemma \ref{Lem35}. }\label{Appendix B}

  For any $P \in \Upsilon([0,T];\mathbb S^n)$, applying It\^o's formula to $\langle P_tX_t^i, X_t^i\rangle$, we have
  \begin{equation*}\label{111621}\begin{aligned}
    -\langle P_0x,x\rangle &\!=\!\mathbb E\bigg[\!\int_0^T\!\big\{\langle (\dot{P}_t+P_tA_t+A_t^{\top}P_t)X_t^i+2[P_tb_t+D^{\top}_tP_t\bar b_t+(P_t\bar A_t+D^{\top}_tP_tD_t)x_t^0\\
    &\quad +(P_t\bar B_t+D^{\top}_tP_t\bar F_t)u_t^0], X_t^i\rangle +2\langle (P_tB_t+D^{\top}_tP_tF_t)u_t^i, X_t^i\rangle+\langle F^{\top}_tP_tF_tu_t^i\\
    &\quad +2(F^{\top}_tP_t\bar b_t+F^{\top}_tP_t\bar D_tx_t^0+F^{\top}_tP_t\bar F_tu_t^0), u_t^i\rangle +\langle \bar D_t^{\top}P_t\bar D_tx_t^0+2\bar D_t^{\top}P_t\bar b_t, x_t^0\rangle\\
    &\quad +\langle \bar F_t^{\top}P_t\bar F_tu_t^0+2\bar F_t^{\top}P_t\bar b_t, u_t^0\rangle +\bar b^{\top}_tP_t\bar b_t+\sigma_t^{\top}P_t\sigma_t+\bar\sigma_t^{\top}P_t\bar\sigma_t  \big\}\mathrm{d}t-\langle P_TX_T^i, X_T^i\rangle \bigg]\\
    &\!=\!\mathbb E\bigg[\!\int_0^T\!\big\{\langle(Q^P_t-Q_t)X_t^i+2(q^P_t-q_t+\alpha_1Q_tx_t^0+\beta_2S_tu_t^0),X_t^i\rangle+\langle (R_t^P-R_t)u_t^i    \\
    &\quad +2(r^P_t-r_t+\alpha_2S^{\top}_tx^0_t+\beta_1 R_tu_t^0),u_t^i\rangle+2\langle (S^P_t-S_t) u_t^i, X_t^i\rangle-\alpha_1\langle \alpha_1 Q_tx_t^0-2q_t,x_t^0\rangle\\
    &\quad -\beta_1\langle \beta_1R_tu_t^0-2r_t,u_t^0\rangle-2\alpha_2\beta_2\langle S_tu_t^0,x_t^0\rangle+M_t^P\big\}\mathrm{d}t+\langle (L_T^P-L_T)X_T^i,X_T^i\rangle \bigg].
  \end{aligned}\end{equation*}
  By adding the above equation to both sides of the cost \eqref{lcost}, then the desired result is obtained.

  Moreover, let $P$ be a relaxed compensator for Problem (MFD). According to Definition \ref{def36}, $(Q^P,S^P,R^P,L^P_T)$ satisfies Condition (PD). Thus for any given initial value $x\in \mathbb R^n$ and any $u^i\in \mathscr{U}_{d}^{i}$, by virtue of Remark \ref{Rem31}, we know that Problem (MFP) is well-posed. Then Problem (MFD) is also well-posed by Lemma \ref{Lem35}.

  \section{Proof of Theorem \ref{thm8314}. }\label{Appendix D} Firstly, we prove the unique solvability of Riccati equation \eqref{Riccati}.
  If there exists a relaxed compensator $P\in\Upsilon([0,T];\mathbb S^n)$, we can know that the quadruple $(Q^P,S^P,R^P,L^P_T)$ satisfies Condition (PD) by Definition \ref{def36}. Thus by virtue of \cite[Theorem 7.2]{Yong1999}, the following Riccati equation
  \begin{equation}\label{pRiccati}
  \left\{
  \begin{aligned}
     &\dot{\Pi}_t^P+\Pi_t^P A_t+A_t^{\top}\Pi_t^P+D_t^{\top}\Pi_t^P D_t+Q_t^P-[(S_t^P)^{\top}+B_t^{\top}\Pi_t^P +F_t^{\top}\Pi_t^P D_t]^{\top}\\
     &\qquad \times(R_t^P+F_t^{\top}\Pi_t^P F_t)^{-1} [(S_t^P)^{\top}+B_t^{\top}\Pi_t^P+F_t^{\top}\Pi_t^P D_t]= 0,\\
     &\Pi^P_T=L^P_T,\quad R^P+F^{\top}\Pi^P F\gg 0,\quad t\in [0,T],
  \end{aligned}
  \right.
\end{equation}
admits a unique solution $\Pi^P\in C([0,T];\mathbb S^n_+)$. Then one can check easily
\begin{equation}\label{trans}
  \Pi=\Pi^P+P,
\end{equation}
solves Riccati equation \eqref{Riccati}. Moreover, if $\Pi\in C([0,T];\mathbb S^n)$ solves Riccati equation \eqref{Riccati}, then the inverse transformation \eqref{trans} provides a solution to \eqref{pRiccati}. Thus the solvability of Riccati equation \eqref{Riccati} and \eqref{pRiccati} are equivalent.

Secondly, we derive the feedback representation of the decentralized strategy $\bar u^i$.  From \eqref{pHamiltonian} and \eqref{px0}, we have
  \begin{equation}\label{x0}
      \begin{aligned}
        &\mathrm{d}x_t^0\!=\!\big[(A_t\!+\!\bar A_t)x_t^0\!+\!(B_t\!+\!\bar B_t)u^0_t\!+\!b_t\big] \mathrm{d}t \!+\!\big[(D_t\!+\!\bar D_t)x_t^0\!+\!(F_t\!+\!\bar F_t)u^0_t\!+\!\bar b_t\big]\mathrm{d}W_t^0,\quad x_0^0\!=\!x.
      \end{aligned}
  \end{equation}
  Noticing the terminal condition of \eqref{Hamiltonian}, we can suppose
  \begin{equation}\label{112035}
    {\varphi}_t^i=\Pi_t ({\bar X}_t^i-x_t^0)+\Sigma_tx_t^0+\rho_t,
  \end{equation}
  where $\Pi:[0,T]\rightarrow \mathbb S^n$, $\Sigma:[0,T]\rightarrow \mathbb S^n$ and $\rho :[0,T]\rightarrow \mathbb R^n$ are absolutely continuous functions with terminal condition $\Pi_T=L_T$, $\Sigma_T=(1-\alpha_3)L_T$ and $\rho_T=l_T$, respectively. Applying It\^o's formula to \eqref{112035}, we have
   \begin{equation}\label{062440}
     \begin{aligned}
       &\mathrm{d}\varphi_t^i=\big\{\dot{\Pi}_t(\bar X_t^i-x_t^0)+\Pi_t[A_t(\bar X_t^i-x_t^0)+B_t(\bar u_t^i-u_t^0)]+\dot{\Sigma}_tx_t^0+\Sigma_t\big[(A_t+\bar A_t)x_t^0\\
       &\qquad\quad +(B_t+\bar B_t)u_t^0+ b_t\big]+\dot{\rho}_t\big\}\mathrm{d}t+\big\{\Pi_t[D_t(\bar X_t^i-x_t^0)+F_t(\bar u_t^i-u_t^0)]\\
       &\qquad\quad +\Sigma_t[ (D_t+\bar D_t)x_t^0+ (F_t+\bar F_t)u_t^0+\bar b_t]\big\}\mathrm{d}W_t^0+\Pi_t\sigma_t\mathrm{d}W_t^i+\Pi_t\bar\sigma_t\mathrm{d}\overline W_t^i.
     \end{aligned}
   \end{equation}
   Comparing \eqref{062440} with \eqref{Hamiltonian}, it yields,
  \begin{equation}\label{eta412}
      {\eta}_t^i= \Pi_t[D_t(\bar X_t^i-x_t^0)+F_t(\bar u_t^i-u_t^0)]+\Sigma_t[ (D_t+\bar D_t)x_t^0+ (F_t+\bar F_t)u_t^0+\bar b_t].
  \end{equation}
  Taking $\mathbb E[\cdot|\mathscr F^{\bar Y^i}]$ both on the drift terms of \eqref{062440} and  \eqref{Hamiltonian}, then we obtain
  \begin{equation*}
    \begin{aligned}
      0&=\big(\dot{\Pi}_t+\Pi_t A_t+A_t^{\top}\Pi_t+D_t^{\top}\Pi_t D_t+Q_t- \widetilde \Pi_t\mathcal{R}_{t}^{-1}\widetilde \Pi_t^{\top}\big)(\hat{\bar X}_t^i-x_t^0)\\
      &\quad+ [\dot{\Sigma}_t+\Sigma_t(A_t+\bar A_t)+A_t^{\top}\Sigma_t+D^{\top}_t\Sigma_t(D_t+\bar D_t) -\widetilde\Sigma_t \widetilde{\mathcal R}^{-1}_t\bar\Sigma_t^{\top}\\
      &\quad+(1-\alpha_1)Q_t ]x_t^0+\dot{\rho}_t+A_t^{\top}\rho_t-\widetilde\Sigma_t \widetilde{\mathcal R}_t^{-1}\tilde \rho_t+\Sigma_tb_t+ D^{\top}_t\Sigma_t\bar b_t+q_t.
    \end{aligned}
  \end{equation*}
  Then we can obtain the Riccati equations \eqref{Riccati}-\eqref{Riccati2} and ODE \eqref{BSDE0}. By substituting \eqref{112035} and \eqref{eta412} into \eqref{optimalcontrol}, we obtain
  \begin{equation*}
      \begin{aligned}
          &B^{\top}_t[\Pi_t(\hat{\bar X}_t^i-x_t^0)+\Sigma_tx_t^0+\rho_t]+F_t^{\top}\Pi_t[D_t(\hat{\bar X}_t^i-x_t^0)+F_t(\bar u_t^i-u_t^0)]\\
          &\quad +F^{\top}_t\Sigma_t[(D_t+\bar D_t)x_t^0+(F_t+\bar F_t)u_t^0+\bar b_t]+S_t(\hat{\bar X}_t^i-\alpha_2x_t^0)+R_t(\bar u_t^i-\beta_1u_t^0)+r_t=0.
      \end{aligned}
  \end{equation*}
  Then, recalling $u^0=\mathbb E[\bar u^i|\mathscr F^{\theta}]$ (see \eqref{px0}), we further derive
  \begin{equation}\label{u0412}
      u_{t}^0=-\widetilde{\mathcal R}_t^{-1}(\bar\Sigma_t^{\top}x_t^0+\tilde\rho_t),
  \end{equation}
  along with the feedback representation given in \eqref{feed230129}.

  Finally, we prove $\bar u_i$ given by \eqref{feed230129} belongs to $\mathcal U_d^i$. Substituting \eqref{u0412} into \eqref{x0}, we obtain  \eqref{x0412}. Recalling (H1)-(H2),  \eqref{x0412} is a linear SDE with uniformly bounded coefficients, which implies the unique solvability of \eqref{x0412}. Based on the classical estimate of solution to SDE, we further obtain $\mathbb E[\sup_{0\leq t\leq T}|x_t^0|^2]\leq C$, where $C$ is a positive constant depending on $x, \Pi, \Sigma, \rho$ and the uniformly bound of all coefficients. Thus, we also have $\mathbb E[\sup_{0\leq t\leq T}|u_t^0|^2]\leq C$ by \eqref{u0412}. Then, from \eqref{feedback x}, it holds that $\mathbb E[\sup_{0\leq t\leq T}|\bar X_t^i|^2]\leq C$. Hence, we have $\mathbb E[\sup_{0\leq t\leq T}|\bar u_t^i|^2]\leq C$ from \eqref{feed230129}. In addition, recalling the notation \eqref{filteringsymbol}, and noting that $\hat{\bar X}^i=\mathbb E[\bar X^i|\mathscr F^{\bar Y^i}]$ is $\mathscr F^{\bar Y^i}$ adapted and $x^0$ is $\mathscr F^{\theta}$ adapted,  it follows that $\bar u^i$ is $\mathscr F^{\bar Y^i}$ adapted. Therefore, one can obtain $\bar u^i\in \mathcal U_d^i$.

  \section{Proof of Lemma \ref{rmk031337}.} \label{Appendix C}
  (i) $\Rightarrow$ (ii): By the first conclusion of  Theorem \ref{thm8314}, one can obtain the desired result.

  (ii) $\Rightarrow$ (i): When Riccati equation \eqref{Riccati} admits a unique solution, one can check that $\Pi$ satisfies Condition (RC). Then by Proposition \ref{prop310}, we know that $\Pi$ is a relaxed compensator.

  (ii) $\Leftrightarrow$ (iii): By virtue of \cite[Theorem 4.5]{SLY2016}, we can obtain the equivalence between (ii) and (iii).

   Next, we provide a further explanation of (i) $\Rightarrow$ (iii).

  Noticing Definition \ref{def36}, we obtain that the map $u^i\rightarrow J_i^P(u_{\cdot}^i)$ is uniformly convex by
  \cite[Corollary 3.4 and Proposition 3.5]{SLY2016}. Thus (see  \cite[Equation (3.7)]{SLY2016}), we obtain for any $u^i\in \mathscr U_{d}^{i}$ and some $\lambda>0$,
  \begin{equation*}\begin{aligned}
    J_i^{P,0}(u_{\cdot}^i)&\!:=\!\frac{1}{2}\mathbb{E}\bigg[\!\int_0^T(\langle Q^P_t\mathbb X_t^i ,\mathbb X_t^i\rangle\! +\!\langle R^P_tu_t^i, u_t^i\rangle
    \!+\!2\langle S^P_tu_t^i, \mathbb X_t^i\rangle )\mathrm{d}t\!+\!\langle L^P_T\mathbb X_T^i, \mathbb X_T^i\rangle\bigg]\\
    &\! \geq\! \lambda \mathbb E\int_0^T\!\!|u_t^i|^2\mathrm{d}t,
  \end{aligned}\end{equation*}
  where $\mathbb X^i$ is defined by the following SDE,
  \begin{equation*}
    \begin{aligned}
      \mathrm{d}\mathbb X_t^i=\big\{A_t\mathbb X_t^i+B_tu_t^i\big\} \mathrm{d}t  +\big\{D_t\mathbb X_t^i+F_tu_t^i\big\}\mathrm{d}W_t^0,\quad
      \mathbb X_0^i=0.
    \end{aligned}
  \end{equation*}
  Similar to Lemma \ref{Lem35}, we know that when %the initial value
  $x=0$, it follows that $J^0_i(u_{\cdot}^i)=J^{P,0}_i(u_{\cdot}^i)$,  where
  \begin{equation*}
    J^0_i(u_{\cdot}^i):=\frac{1}{2}\mathbb{E}\bigg[\int_0^T\big\{\langle Q_t\mathbb X_t^i, \mathbb X_t^i\rangle +\langle R_tu_t^i, u_t^i\rangle +2\langle S_tu_t^i, \mathbb X_t^i\rangle\big\}\mathrm{d}t+\langle L_T\mathbb X_T^i,\mathbb X_T^i\rangle\bigg].
  \end{equation*}
  Then we have $J^0_i(u_{\cdot}^i)\geq \lambda \mathbb E\int_0^T|u_t^i|^2\mathrm{d}t$ for any $u^i\in \mathscr U_{d}^{i}$ and some $\lambda>0$, which yields that the map $u^i\rightarrow J_i(u_{\cdot}^i)$ is uniformly convex by \cite[Proposition 3.5]{SLY2016}.

%\section*{Appendix}

\end{document}